\documentclass[a4paper]{article}

\usepackage[T1]{fontenc}
\usepackage{fullpage}
\usepackage{graphicx}
\usepackage[utf8]{inputenc}
\usepackage[british]{babel}
\usepackage{setspace}
\usepackage{amsmath}
\usepackage{mathrsfs}
\usepackage{amssymb}
\usepackage{amsthm}
\usepackage{booktabs}
\usepackage{caption}
\usepackage{tikz-cd}
\usepackage{float}
\usepackage{rotating}
\usepackage{verbatim}
\usepackage{epigraph}
\usepackage{geometry}
\usepackage{datetime}
\usepackage{youngtab}
\usepackage{multirow}
\usepackage{hyperref}
\usepackage[normalem]{ulem}
\usepackage{makecell}

\bibliographystyle{abbrv}

\usepackage{enumitem}
\setlist{nolistsep,noitemsep}

\theoremstyle{plain} \newtheorem{thm}{Theorem}[section]
\theoremstyle{definition} \newtheorem{defn}[thm]{Definition}
\theoremstyle{plain} \newtheorem{prop}[thm]{Proposition}
\theoremstyle{plain} \newtheorem{lem}[thm]{Lemma}
\theoremstyle{plain} 
\theoremstyle{remark} \newtheorem{rk}[thm]{Remark}
\theoremstyle{plain} \newtheorem{cj}[thm]{Conjecture}
\theoremstyle{remark} 
\theoremstyle{remark} 
\theoremstyle{definition} 
\theoremstyle{definition} \newtheorem{eg}[thm]{Example}
\theoremstyle{definition} 

\newcommand{\numberset}{\mathbb}
\newcommand{\N}{\numberset{N}}

\newcommand{\C}{\numberset{C}}
\newcommand{\R}{\numberset{R}}
\newcommand{\Z}{\numberset{Z}}
\newcommand{\poids}{\mathtt{P}}

\newcommand{\lieg}{\mathfrak{g}}

\newcommand{\orb}{\mathcal{O}}
\newcommand{\Orb}{\mathfrak{O}}

\DeclareMathOperator{\bir}{Bir}
\DeclareMathOperator{\ind}{Ind}

\DeclareMathOperator{\Ad}{Ad}

\DeclareMathOperator{\Lie}{Lie}

\DeclareMathOperator{\SO}{SO}
\DeclareMathOperator{\SL}{SL}

\title{Spherical birational sheets in reductive groups}
\author{Filippo Ambrosio, Mauro Costantini\\
Dipartimento di Matematica ``Tullio Levi-Civita''\\
Torre Archimede - via Trieste 63 - 35121 Padova - Italy\\
ambrosio@math.unipd.it, costantini@math.unipd.it}
\date{}

\begin{document}
\maketitle
\begin{abstract}
We classify the spherical birational sheets in a complex simple simply-connected algebraic group.
We use the classification to show that, when $G$ is a connected reductive complex algebraic group with simply-connected derived subgroup, two conjugacy classes $\orb_1$, $\orb_2$ of $G$ lie in the same birational sheet, up to a shift by a central element of $G$, if and only if the coordinate rings of $\orb_1$ and $\orb_2$ are isomorphic as $G$-modules. As a consequence, we prove a conjecture of Losev for the spherical subvariety of the Lie algebra of $G$.
\end{abstract}

MSC-class:  20G20 (Primary) 14M27 (Secondary)

Keywords: birational sheets, spherical conjugacy classes

\section{Introduction}
Let $G$ be a complex connected reductive algebraic group acting on a variety $X$.
A sheet of $X$ is an irreducible   
component of the locally closed subset $\{x \in X \mid \dim (G \cdot x) = d\}$ for some fixed $d$: then $X$ is the finite union of its sheets. 
Let $B$ be a Borel subgroup of $G$, the complexity of $X$ is the codimension of a generic $B$-orbit in $X$.
The variety $X$ is \emph{spherical} if has complexity zero.
By \cite[Proposition 1]{Arzhantsev_1997},  the complexity of orbits as homogeneous spaces of $G$ is constant along the sheets.
In particular it follows that the property of being spherical is preserved along sheets. 
We say that the sheet $S$ is  \emph{spherical} if the orbits in $S$ are spherical.
Now assume $X=G$, and the action is given by conjugation.
Let $T$ be a maximal torus of $B$, with Weyl group $W$.
From the Bruhat decomposition $G=\bigcup_{w\in W}BwB$, it follows that for every conjugacy class $\orb$ of $G$ there exists a unique $w_{\orb} \in W$ such that $\orb \cap B w_{\orb} B$ is dense in $\orb$.
Similarly, for $S$ a sheet of conjugacy classes, there is a unique  $w_S \in W$ such that $S \cap B w_S B$ is dense in $S$.
By \cite[Proposition 5.3]{CarnoBul} if $S$ is a spherical sheet, then for every conjugacy class $\orb$ lying in $S$ we have $w_{\orb}=w_S$.

A natural question is to consider the ring of regular functions $\C[\orb]$ as $\orb$ varies in a sheet $S$ and ask whether the $G$-modules $\C[\orb]$ are isomorphic.
When $G$ acts via the adjoint action on its Lie algebra $\lieg$, some answers were obtained in \cite{BK79}:
for $\lieg = \mathfrak{sl}_n(\C)$  the $G$-module structure of $\C[\orb]$ is preserved along sheets, but in this fails in general.
In \cite{Losev}, Losev refined the notion of sheets of adjoint orbits by introducing the definition of birational sheets. In \cite[Theorem 4.4]{Losev}, it is proven that birational sheets are locally closed subvarieties partitioning $\mathfrak{g}$.
A remarkable result (see \cite[Remark 4.11]{Losev}) states that 
if $\Orb_1$ and $\Orb_2$ are adjoint orbits of $\mathfrak{g}$ lying in the same birational sheet, then their $G$-module structures are isomorphic.
In the same Remark, Losev conjectured that the viceversa is also true, aiming for an intrinsic characterization of birational sheets of the Lie algebra.

In this paper we deal with this problem with respect to spherical orbits both in the setting of conjugacy classes in $G$ with simply-connected  derived subgroup and in the setting  of adjoint orbits in $\lieg$.
We recall the definition of birational sheet in $\lieg$ from \cite{Losev} and in $G$ from \cite{AF}.
A birational sheet is a certain union of $G$-orbits and is contained in a 
sheet, hence the property of being spherical is preserved along birational sheets. 
We shall call \emph{spherical birational sheet} any birational sheet consisting of spherical orbits.
For $G$ simple simply-connected, we classify the spherical birational sheets and  observe 
 that the union $G_{sph}$ of all spherical conjugacy classes in $G$ is the disjoint union of spherical birational sheets.
If $\orb$ is a spherical conjugacy class, then $\C[\orb]$ is multiplicity-free, i.e. a simple $G$-module occurs in $\C[\orb]$  with multiplicity at most $1$.
Therefore, $\C[\orb]$ is completely determined as a $G$-module by its weight monoid, i.e. by the highest dominant weights $\lambda$ for which the simple $G$-module  with highest weight $\lambda$ occurs in the decomposition of $\C[\orb]$. 

In \cite{Costantini2010} the weight monoids  are explicitely described for every spherical conjugacy class of $G$ simple simply-connected.
Using these results and the classification of spherical birational sheets, we shall prove the main result of this paper: let $G$ be a complex connected reductive algebraic group with simply-connected derived subgroup and let $\orb_1$ and $\orb_2$ be spherical conjugacy classes in $G$.
Let $S_1^{bir}$ (resp. $S_2^{bir}$) be the birational sheet containing $\orb_1$ (resp. $\orb_2$).
Then $\C[\orb_1]$ is isomorphic to $\C[\orb_2]$ as a $G$-module if and only if $S_2^{bir}= zS_1^{bir}$ for some $z \in Z(G)$ (the assumption on the derived subgroup of $G$ cannot be relaxed).

From this we also deduce the validity of Losev's conjecture in the case of spherical adjoint orbits in $\lieg$. 
We also show that Losev's conjecture (resp. the corresponding group anologue) is true in the case $\lieg =
{\mathfrak{sl}}_n(\C)$ (resp. $G=\SL_n(\C)$).

\section{Definitions and notations}
Let $G$ be a connected reductive algebraic group over $\C$
 and let $\mathfrak{g}$ be its Lie algebra.
If $K$ is a closed subgroup of $G$, we denote by $K^\circ$ its identity component, by $K'$ its derived subgroup and by $Z(K)$ its centre.
Similarly, if $\mathfrak{k}$ is a Lie subalgebra of $\mathfrak{g}$, we denote by $\mathfrak{z(k)}$ its centre.

If $X$ is a $K$-set, we denote by $X/K$ the set of $K$-orbits of elements in $X$.
When $K$ acts regularly on a variety $X$ and $x \in X$, the $K$-orbit of $x$ is denoted by $K \cdot x$.
For any $n \in \N$, we define the locally closed subsets $X_{(n)} := \{ x \in X \, \mid \, \dim (K \cdot x) = n\}$ of $X$.
A \emph{sheet} of $X$ for the action of $K$ is an irreducible component of $X_{(n)}$ for some $n \in \N$ such that $X_{(n)} \neq \varnothing$.
For $Y \subseteq X$, the regular locus of $Y$ is $Y^{reg} = Y \cap X_{(\bar{n})}$, where $\bar{n} = \max \{n \in \N \, \mid \, Y \cap X_{(n)} \neq \varnothing \}$, an open subset of $Y$, and the normalizer of $Y$ in $K$ is $N_K(Y) :=\{ k \in K \, \mid \, k \cdot Y=Y\}$.
For $x \in X$, its stabilizer is $K_x := \{ k \in K \, \mid \, k \cdot x = x\}$.
When we consider the conjugacy (resp. the adjoint) action of $G$ on itself (resp. on $\mathfrak{g}$) we adopt the following notation for orbits and stabilizers.
Dealing with $K$-conjugacy classes or $K$-adjoint orbits, we shall use the notation $\orb^K_g := K \cdot g$, 
 $\Orb^K_\xi:= \Ad(K)(\xi)$.
We shall omit superscripts whenever $K=G$.
For  $x \in G$ and $\eta \in \mathfrak{g}$, we write:
\begin{align*}
C_G(x) &:= G_x = \{ g \in G \, \mid \, gxg^{-1} = x \};  &C_G(\eta) &:= G_\eta = \{ g \in G \, \mid \, (\Ad g)(\eta) = \eta \}; \\
\mathfrak{c_g}(x) &:= \{\xi \in \mathfrak{g} \, \mid \, (\Ad x)(\xi) = \xi \}; & \mathfrak{c_g}(\eta) &:= \mathfrak{g}_\eta  = \{ \xi \in \mathfrak{g} \, \mid \, [\eta, \xi] = 0 \}.
\end{align*}
For a subset $Y \subseteq G$, we set $C_G(Y):= \bigcap_{y \in Y} C_G(y)$.

We write $\mathcal{U}_K$ 
for the unipotent variety of $K$ and $\mathcal{N}_{\mathfrak{k}}$ 
for the nilpotent cone of $\mathfrak{k} := \Lie(K)$; we also set $\mathcal{U} := \mathcal{U}_G$ and $\mathcal{N} := \mathcal{N}_{\mathfrak{g}}$.
The set of all $K$-conjugacy classes of $K$ is denoted $K/K$.

When we write $g = su\in G$ we implicitly assume that $su$ is the Jordan decomposition of $g$, with $s$ semisimple and $u$ unipotent. Similarly for $\xi = \xi_s + \xi_n \in \mathfrak{g}$.

Let $B$ be a Borel subgroup of $G$ and $T$ a maximal torus of $B$. 
We denote by $\Phi$ the root system of $G$ with respect to $T$, by $\Delta$ the base of $\Phi$ individuated by $B$ and by $\Phi^+$ the corresponding subset of positive roots.
The one-parameter subgroup of $G$ corresponding to the root $\alpha \in \Phi$ will be denoted by $U_\alpha$.
We call \emph{Levi subgroup} of $G$ every Levi factor of a parabolic subgroup of $G$. 

A standard parabolic subgroup is a subgroup containing $B$: it is of the form $P_\Theta = \langle B, U_{-\alpha} \mid \alpha \in \Theta \rangle$ for $\Theta \subseteq \Delta$. We have $P_\Theta = L_\Theta U_\Theta$, where the Levi factor $L_\Theta := \langle T, U_\alpha, U_{-\alpha} \mid \alpha \in \Theta \rangle$ is called a \emph{standard Levi subgroup} and $U_\Theta$ is the unipotent radical of $P_\Theta$.  We also set $\Lie(T) = \mathfrak{h}$, $\Lie(B) = \mathfrak{b}$, $\Lie(U_\alpha) = \mathfrak{g}_\alpha$ for all $\alpha \in \Phi$.

A \emph{pseudo-Levi subgroup} is the connected centralizer of a semisimple elements of $G$.

Finite-dimensional irreducible $G$-modules are parametrized by $X(T)^+$, the set of dominant weights of $T$ (with respect to $\Phi^+$), and we write  $V(\lambda)$ for the irreducible $G$-module of highest weight $\lambda$.

Let $X$ be a conjugacy class in $G/G$ or an adjoint orbit in $\mathfrak{g}/G$. We have a decomposition into simple $G$-modules of the ring of regular functions $\C[X]$:
 $$ \C[X]~\simeq_G~\bigoplus\limits_{\lambda \in X(T)^+} n_\lambda V(\lambda),$$
    where $n_\lambda$  is the \emph{multiplicity} with which $V(\lambda)$ occurs in $\C[X]$, denoted by $[\C[X]:V(\lambda)]$:  we denote by $\Lambda(X)$ the monoid of dominant weights occurring in $\C[X]$.
If a Borel subgroup of $G$ has a dense orbit on $X$, we call $X$ \emph{spherical}.
Since $X$ is quasi-affine, this is equivalent to the fact that $\C[X]$ is \emph{multiplicity-free}, i.e. $n_\lambda \in \{0,1\}$ for every $\lambda\in X(T)^+$: hence
$\C[X] \simeq_G \bigoplus_{\lambda\in \Lambda(X)} V(\lambda)$.

A closed subgroup $H \leq G$ is said to be \emph{spherical} if the homogeneous space $G/H$ is a spherical variety.

We denote by $G_{sph}$ (resp. $\mathfrak{g}_{sph}$) the union of all spherical conjugacy classes in $G$ (resp. spherical adjoint orbits in $\mathfrak{g}$): these are closed  subsets by \cite[Corollary 2]{Arzhantsev_1997}.

When $G$ is simple, we denote the simple roots by
 $\alpha_1, \dots, \alpha_n$:
 we shall use the numbering and the
description of the simple roots in terms of the canonical basis $(e_1,\ldots,e_k)$ of an appropriate
$\R^k$ as in \cite[Planches I–IX]{Bour4}.  We denote by $\poids$ the weight lattice, by $\poids^+$ the monoid of
dominant weights. 
Also, 
$\alpha^\vee_1, \dots, \alpha^\vee_n$ are the co-roots,
$\omega_1,\ldots\omega_n$ are the fundamental weights and $\check\omega_1,\ldots\check\omega_n$ are the fundamental co-weights: these are the elements $\check{\omega}_j$ of $\mathfrak{h}$ defined by 
$\alpha_i (\check{\omega}_j) = \delta_{ij}$ for $1 \leq i,j \leq n$.
The Weyl group of $G$ is denoted by $W$, for $w \in W$ we use the notation $\dot{w}$ for an element of $N_G(T)$ representing $w \in W \simeq N_G(T)/T$. We write $s_i$ for the simple reflection with respect to the simple root $\alpha_i$, for $i =1, \dots, n$.
Let $\beta=\sum_{j=1}^nc_j\alpha_j$ be the highest root in $\Phi$: we define $\widetilde{\Delta} = \Delta \cup \{-\beta \}$.
For the exceptional groups, we shall write $\beta=(c_1,\dots,c_n)$ 
For $\Theta \subsetneq \widetilde{\Delta}$, set $L_\Theta := \langle T, U_\alpha, U_{-\alpha} \mid \alpha \in \Theta \rangle$.
Following the terminology introduced in \cite{SommersBC}, we say that $L_\Theta$ is a \textit{standard pseudo-Levi subgroup} of $G$.
By \cite[Proposition 2]{SommersBC}, pseudo-Levi subgroups are conjugates of standard pseudo-Levi subgroups. 

An element $su \in G$ is \emph{isolated} if  $C_G(Z(C_G(s)^\circ)^\circ) = G$, as in \cite[Definition 2.6]{LusztigICC}; in this case we say that $\orb_{su}$ is an \emph{isolated class}.

A partition of $n \in \N \setminus \{0\}$  is a sequence of non-increasing positive integers $\mathbf{d} = [d_1, \dots, d_r] $ such that $\sum_{i=1}^r d_i = n$: we write $\mathbf{d} = [d_1, \dots, d_r] \vdash n$. If $\mathbf{d} \vdash n$, the  dual partition is $\mathbf{d}^{t} = \mathbf{f}$, where $f_i = \lvert \{j \mid d_j \geq i\} \rvert$ for all $i$. 
  We will also use the compact notation $\mathbf{d} = [e_1^{m_1}, \dots, e_s^{m_s}]$
where $e_1 > \dots > e_s >0$ by grouping equal $d_i$'s.
Partitions will be used to denote nilpotent orbits in classical Lie algebras, whereas for exceptional Lie algebras we will use the Bala-Carter labeling, as in \cite{CollMcGov}.

We use the symbol $\sqcup$ to denote a disjoint union.
\section{Jordan classes, sheets and birational sheets}

\subsection{Lie algebra case}
Let $\mathfrak{l} \subseteq \mathfrak{g}$ be a Levi subalgebra and embed it in a parabolic subalgebra $\mathfrak{p} = \mathfrak{l} \oplus \mathfrak{n}$, where $\mathfrak{n}$ is the nilradical of $\mathfrak{p}$. Let $P\leq G$ such that $\Lie (P) = \mathfrak{p}$, and let $P = L U_P$ be its Levi decomposition with $\Lie(L) =\mathfrak{l}$ and $\Lie (U_P) = \mathfrak{n}$.
Let $\Orb^L \in \mathcal{N}_\mathfrak{l}/L$.
Then $P$ acts on the closed subvariety $\overline{\Orb^L} + \mathfrak{n} \subseteq \mathfrak{g}$ via the adjoint action.
The \textit{generalized Springer map} is:
\begin{equation}  \label{eq_gsm}
\gamma \colon  G \times^P (\overline{\Orb^L} + \mathfrak{n}) \to \Ad(G)  (\overline{\Orb^L} + \mathfrak{n}),\quad
 g * \xi \mapsto (\Ad g)(\xi).
 \end{equation} 
The image of $\gamma$ is the closure of a single orbit $\Orb \in \mathcal{N}/G$, and $\ind_\mathfrak{l}^\mathfrak{g} \Orb^L := \Orb$ is the orbit \textit{induced} from $\Orb^L$.
It only depends on the pair $(\mathfrak{l}, \Orb^L)$, not on the parabolic subgroup $P$ chosen to define \eqref{eq_gsm}.
If $\Orb \in \mathcal{N}/G$  cannot be induced from a nilpotent orbit $\Orb^L$ in a proper Levi subalgebra $\mathfrak{l} \subsetneq \mathfrak{g}$, then $\Orb$ is said to be \textit{rigid}.
For a complete exposition on induction, refer to \cite[\S 7]{CollMcGov}.

A \emph{decomposition datum}  of $\lieg$ consists of a pair of a Levi subalgebra $\mathfrak{l} \subseteq \mathfrak{g}$ and an orbit $\Orb^L \in \mathcal{N}_\mathfrak{l}/L$, see \cite[\S 1.6]{Borho}.
To any element $\xi=\xi_s + \xi_n \in \mathfrak{g}$ we can associate its decomposition datum $(\mathfrak{c_g}(\xi_s), \Orb^{C_G(\xi_s)}_{\xi_n})$.

We denote by $\mathscr{D}(\mathfrak{g})$ the set of decomposition data of $\mathfrak{g}$.
$G$ acts by simultaneous conjugacy on the elements of $\mathscr{D}(\mathfrak{g})$.
We say that two elements of $\mathfrak{g}$ are \emph{Jordan equivalent} if their decomposition data are conjugate in $G$.
The \emph{Jordan class} of $\xi \in \mathfrak{g}$ is the set $\mathfrak{J}(\xi)$ consisting of all elements which are Jordan equivalent to $\xi$.
If $\xi \in \mathfrak{g}$ has decomposition datum $(\mathfrak{l}, \Orb^L)$, then $\mathfrak{J}(\xi) = {\mathfrak{J}(\mathfrak{l}, \Orb^L)} =(\Ad G)(\mathfrak{z(l)}^{reg} + \Orb^L)$. Jordan classes form a partition of $\mathfrak{g}$ into finitely many irreducible subvarieties parametrized by the (finite) set $\mathscr{D}(\mathfrak{g})/G$.
They consist of unions of equidimensional adjoint orbits
and their closure $\overline{\mathfrak{J}(\mathfrak{l}, \Orb^L)}$ (resp. \emph{regular closure} $\overline{\mathfrak{J}(\mathfrak{l}, \Orb^L)}^{reg}$) is a union of Jordan classes.

Sheets for the adjoint action of $G$ on the Lie algebra $\mathfrak{g}$ have been studied in \cite{BK79, Borho}.
They are parametrized by the $G$-equivalence classes of decomposition data $(\mathfrak{l}, \Orb^L) \in \mathscr{D}(\mathfrak{g})$ such that $\Orb^L \in \mathcal{N}_{\mathfrak{l}}/L$ is rigid.
The sheet $\mathfrak{S}(\mathfrak{l}, \Orb^L)$ corresponding to the (class of) decomposition datum $(\mathfrak{l}, \Orb^L)$ with $\Orb^L$ is rigid is:
$$\mathfrak{S}(\mathfrak{l}, \Orb^L) = \overline{\mathfrak{J}(\mathfrak{l}, \Orb^L)}^{reg} = \bigcup_{\xi \in \mathfrak{z(l)}} (\Ad G) ( \xi + \ind_{\mathfrak{l}}^{\mathfrak{c_g}(\xi)} \Orb^L).$$
Every sheet $\mathfrak{S}(\mathfrak{l}, \Orb^L)$ contains a unique nilpotent orbit, i.e. $\ind_{\mathfrak{l}}^{\mathfrak{g}} \Orb^L$. The dimension of a sheet has been determined explicitly in \cite{Moreau1}, \cite{Moreau2}.

If  $\mathfrak{g}$ is simple of type $\mathsf{A}$, its sheets are disjoint and the $G$-module structure of the rings of functions $\C[\Orb]$ is preserved along sheets, see \cite{BK79}.
In general, these  properties do not hold and sheets intersect non-trivially.

In \cite{Losev}, Losev introduced birational sheets of $\lieg$ by restricting conditions on induction.
Let $(\mathfrak{l}, \Orb^L) \in \mathscr{D}(\mathfrak{g})$.
As in \cite[\S 4]{Losev}, we say that $\ind_\mathfrak{l}^\mathfrak{g} \Orb^L$ is \textit{birationally induced} from $(\mathfrak{l}, \Orb^L)$ if, for a (hence any) parabolic subalgebra $\mathfrak{p}$ with Levi factor $\mathfrak{l}$, the generalized Springer map as in \eqref{eq_gsm} is birational.
If $\Orb \in \mathcal{N}/G$ cannot be induced birationally from a proper Levi subalgebra, we say that $\Orb$ is \textit{birationally rigid};
all rigid orbits are birationally rigid.
For any $(\mathfrak{l}, \Orb^L) \in \mathscr{D}(\mathfrak{g})$, one can define, as in \cite[\S 4]{Losev}, the set 
$$\bir (\mathfrak{z(l)}, \Orb^L) = \{\xi \in \mathfrak{z(l)} \mid \ind_{\mathfrak{l}}^{\mathfrak{c_g}(\xi)} \Orb^L \mbox{ is birationally induced}\}.$$

Since $\Orb^L=\ind_\mathfrak{l}^\mathfrak{l} \Orb^L$ is birationally induced from $(\mathfrak{l}, \Orb^L)$, 
the inclusion $\mathfrak{z(l)}^{reg} \subset \bir (\mathfrak{z(l)}, \Orb^L)$ holds.
By \cite[Proposition 4.2]{Losev}, the set $\bir (\mathfrak{z(l)}, \Orb^L)$ is open in $\mathfrak{z(l)}$ and it is independent of the parabolic group chosen for induction.
For $(\mathfrak{l}, \Orb^L) \in \mathscr{D}(\mathfrak{g})$, the \emph{birational closure} of $\mathfrak{J}(\mathfrak{l}, \Orb^L)$ is defined by as follows:
$$\overline{\mathfrak{J}(\mathfrak{l}, \Orb^L)}^{bir}
= \bigcup_{\xi \in \bir(\mathfrak{z(l)}, \Orb^L)} (\Ad G) ( \xi + \ind_{\mathfrak{l}}^{\mathfrak{c_g}(\xi)} \Orb^L).$$
In particular $\overline{\mathfrak{J}(\mathfrak{l}, \Orb^L)}^{bir}$ is open in $\overline{\mathfrak{J}(\mathfrak{l}, \Orb^L)}^{reg}$ and in $\overline{\mathfrak{J}(\mathfrak{l}, \Orb^L)}$, hence it is irreducible and contained in a sheet.
\begin{defn}
For $(\mathfrak{l}, \Orb^L) \in \mathscr{D}(\mathfrak{g})$ with $\Orb^L$ birationally rigid, the \emph{birational sheet} corresponding to $(\mathfrak{l}, \Orb^L)$ is defined as $\overline{\mathfrak{J}(\mathfrak{l}, \Orb^L)}^{bir}$.
\end{defn}

In \cite[Theorem 4.4]{Losev}, it is proven that birational sheets are locally closed subvarieties partitioning the Lie algebra $\mathfrak{g}$; they are paramatrized by $G$-equivalence classes of pairs $(\mathfrak{l}, \Orb^L) \in \mathscr{D}(\mathfrak{g})$ where $\Orb^L \in \mathcal{N}_\mathfrak{l}/L$ is birationally rigid.

We state a remarkable result on birational sheets obtained by Losev, see \cite[Remark 4.11]{Losev}.
\begin{prop} \label{prop_losevconj}
If $\Orb_1$ and $\Orb_2$ are two orbits of $\mathfrak{g}$ lying in the same birational sheet, then their $G$-module structure is isomorphic.
\end{prop}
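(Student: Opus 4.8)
The plan is to fix a birationally rigid datum $(\liel, \Orb^L)$ and prove that, for every dominant weight $\lambda$, the multiplicity $[\C[\Orb]:V(\lambda)]$ takes the same value for all orbits $\Orb$ in the birational sheet $\overline{\mathfrak{J}(\liel,\Orb^L)}^{bir}$. Each such orbit is $\Orb_\xi = (\Ad G)(\xi + \ind_\liel^{\mathfrak{c_g}(\xi)}\Orb^L)$ for some $\xi \in \bir(\mathfrak{z(l)},\Orb^L)$, and the orbits are parametrized, up to the finite action of $N_G(\liel,\Orb^L)/L$, by the points of $\bir(\mathfrak{z(l)},\Orb^L)$, which is open in the vector space $\mathfrak{z(l)}$ and hence irreducible and connected. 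Picking a representative $\eta_\xi$ of $\Orb_\xi$, the orbit is the quasi-affine homogeneous space $G/C_G(\eta_\xi)$, so Frobenius reciprocity gives $[\C[\Orb_\xi]:V(\lambda)] = \dim (V(\lambda)^*)^{C_G(\eta_\xi)}$. The proposition thus reduces to proving that this integer is independent of $\xi \in \bir(\mathfrak{z(l)},\Orb^L)$.

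First I would dispose of the generic stratum. For $\xi \in \mathfrak{z(l)}^{reg}$ one has $\mathfrak{c_g}(\xi) = \liel$, the induction is trivial, and $\eta_\xi = \xi + \nu$ with $\nu \in \Orb^L$ fixed; here $C_G(\eta_\xi)$ is the stabilizer of $\nu$ in $C_G(\xi)$, whose identity component is $L$, and this group is independent of $\xi$ on $\mathfrak{z(l)}^{reg}$. Hence the multiplicities are constant on the regular locus, and the real content is the comparison of such a generic orbit with a degenerate one coming from $\xi_0 \in \bir(\mathfrak{z(l)},\Orb^L) \setminus \mathfrak{z(l)}^{reg}$: there the semisimple part acquires a strictly larger centralizer $\mathfrak{l}' := \mathfrak{c_g}(\xi_0) \supsetneq \liel$ (write $L' := C_G(\xi_0)$), and $\nu$ is replaced by a point $\nu'$ of the strictly larger nilpotent orbit $\ind_\liel^{\mathfrak{l}'}\Orb^L$. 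Since the birational sheet lies inside one sheet, all the $\Orb_\xi$ are equidimensional, so the groups $C_G(\eta_\xi)$ have constant dimension and only their shape degenerates.

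To bridge the generic and degenerate fibres I would use the \emph{birationality} of the generalized Springer map \eqref{eq_gsm}, which is exactly what distinguishes $\bir(\mathfrak{z(l)},\Orb^L)$ from the full regular closure: for $\xi_0 \in \bir(\mathfrak{z(l)},\Orb^L)$ the induction $\ind_\liel^{\mathfrak{l}'}\Orb^L$ is birationally induced, so the map $\gamma$ for the pair $(\liel,\Orb^L)$ inside $\mathfrak{l}'$ is a proper birational morphism of degree one. Assembling the orbits into a $G$-equivariant family over $\bir(\mathfrak{z(l)},\Orb^L)$ and comparing regular functions through the relative Springer map, the degree-one property forces $\gamma_*\mathcal{O} = \mathcal{O}$ and hence identifies $\C[\Orb_{\xi_0}]$, as a $G$-module, with the module computed from the same induction data that governs the generic orbit; this yields the equality $\dim(V(\lambda)^*)^{C_G(\eta_{\xi_0})} = \dim(V(\lambda)^*)^{C_G(\eta_{\xi})}$ for generic $\xi$. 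Crucially, for merely induced but non-birational data the morphism $\gamma$ has degree $>1$ and the identification fails, which is precisely the source of the jumps in $G$-module structure along ordinary sheets observed outside type $\mathsf{A}$ in \cite{BK79}.

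The hard part will be this last identification across the non-proper orbits $\Orb_\xi$: since these are only quasi-affine, one cannot simply invoke semicontinuity of cohomology, and both inequalities between the generic and special multiplicities must be extracted from the geometry of $\gamma$. Concretely, the crux is to show that birational induction matches the reductive parts of the centralizers $C_{L'}(\nu')$ and $C_L(\nu)$, and thereby the dimension of $C_G(\eta_\xi)$-invariants in every $V(\lambda)$; this matching is the exact representation-theoretic content of ``birational'' over ``induced,'' and is where the full strength of Losev's description of the birational locus \cite[Proposition 4.2]{Losev} and of the partition into birational sheets \cite[Theorem 4.4]{Losev} is brought to bear.
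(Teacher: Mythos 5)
First, a point of comparison that matters for your write-up: the paper does not prove this proposition at all. It is imported verbatim from \cite[Remark 4.11]{Losev} (``We state a remarkable result on birational sheets obtained by Losev''), so your attempt is really measured against Losev's argument, not against anything in this paper. The parts of your sketch that are carried out are sound: Frobenius reciprocity for the quasi-affine orbit $G/C_G(\eta_\xi)$ does give $[\C[\Orb_\xi]:V(\lambda)]=\dim\bigl(V(\lambda)^*\bigr)^{C_G(\eta_\xi)}$; constancy over the open stratum $\mathfrak{z}(\mathfrak{l})^{reg}$ is exactly the statement that all orbits in a single Jordan class are isomorphic $G$-homogeneous spaces (the paper cites \cite[\S 3.7]{broer_lectures} for this, and its Proposition \ref{prop_iso_jc_scgp} is the group-side analogue); and you correctly localize the entire content of the proposition at the degenerate points $\xi_0\in\bir(\mathfrak{z}(\mathfrak{l}),\Orb^L)\setminus\mathfrak{z}(\mathfrak{l})^{reg}$.

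The bridging step, however, is a genuine gap, on two counts. (a) ``Degree one forces $\gamma_*\mathcal{O}=\mathcal{O}$'' is an appeal to Zariski's main theorem and requires the target to be \emph{normal}; closures of induced orbits and of decomposition classes in $\lieg$ are in general not normal, and in any case the identification you need is between rings of functions on the quasi-affine orbits themselves, not on their closures, so even granting $\gamma_*\mathcal{O}=\mathcal{O}$ you would still need codimension-two control of the boundaries on both sides. You acknowledge that semicontinuity is unavailable, but no substitute mechanism is supplied. (b) The ``crux'' you name --- matching the reductive parts of $C_{L'}(\nu')$ and $C_L(\nu)$ --- would not suffice even if proven: $\dim\bigl(V(\lambda)^*\bigr)^{C_G(\eta)}$ depends on the full stabilizer, in particular on its unipotent radical, and two subgroups with isomorphic reductive parts have different invariant dimensions in general (compare a maximal torus with a Borel subgroup). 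What birationality actually buys --- and what your sketch never uses --- is the concrete criterion that $C_G(\eta_{\xi_0})$ lies inside the inducing parabolic $P$ (the Lie-algebra analogue of the paper's Lemma \ref{lem_centrinpar}); this is what lets one realize each $\C[\Orb_\xi]$, for $\xi\in\bir(\mathfrak{z}(\mathfrak{l}),\Orb^L)$, as a fiber of a single flat graded family built, roughly, from $G\times^P\bigl(\mathfrak{z}(\mathfrak{l})\times(\Orb^L+\lien)\bigr)$, after which constancy of $G$-multiplicities over the irreducible base follows. Your proposal names the right ingredients and correctly isolates where ordinary induction fails, but it stops exactly where this construction begins, so as it stands it is a well-organized restatement of the difficulty rather than a proof.
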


In addition, Losev conjectured that the viceversa is also true, giving hope for an intrinsic characterization of birational sheets of the Lie algebra.

\begin{cj} \label{conj}
If $\Orb_1$ and $\Orb_2$ are two orbits of $\mathfrak{g}$ with isomorphic $G$-module structure, then they lie in the same birational sheet.
\end{cj}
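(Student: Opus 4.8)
The plan is to reduce the converse implication---isomorphism of $G$-modules forcing membership in a common birational sheet---to a comparison of purely combinatorial invariants, and to carry this out in the two regimes where such invariants are available: spherical orbits and type $\mathsf{A}$. Proposition \ref{prop_losevconj} already supplies the forward implication, so what must be shown is that the assignment sending a birational sheet to the common $G$-module $\C[\Orb]$ of its orbits is \emph{injective}. The first and most accessible case is that of spherical orbits, where $\C[\Orb]$ is multiplicity-free and hence completely determined by its weight monoid $\Lambda(\Orb)$; the statement then becomes: two spherical orbits lie in the same birational sheet if and only if $\Lambda(\Orb_1)=\Lambda(\Orb_2)$. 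First I would establish an explicit classification of the birational sheets consisting of spherical orbits, working with $G$ simple and simply-connected through the parametrization by pairs $(\liel,\Orb^L)$ with $\Orb^L$ birationally rigid, and in parallel tabulate the weight monoids of all spherical orbits, which are available in explicit form from the computations of \cite{Costantini2010}. The proof then reduces to checking, sheet by sheet against the tabulated monoids, that distinct spherical birational sheets produce distinct weight monoids.

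Because the weight-monoid data are cleanest in the group setting, I would first prove the analogous statement for spherical conjugacy classes in a connected reductive $G$ with simply-connected derived subgroup, allowing a shift by $Z(G)$: central translation leaves the $G$-module structure of $\C[\orb]$ unchanged, since conjugation is unaffected, which is precisely why the ambiguity by $z\in Z(G)$ is forced and cannot be removed. Having settled the group case, I would transport it to $\lieg$ for spherical adjoint orbits by matching the two parametrizations: a spherical adjoint orbit $\Orb=\Ad(G)(\xi_s+\xi_n)$ corresponds, through its decomposition datum and the pseudo-Levi/Levi dictionary, to a spherical class, and the weight monoids correspond under this matching. This yields Conjecture \ref{conj} on all of $\lieg_{sph}$.

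For type $\mathsf{A}$ the mechanism is different and rests on two features of $\mathfrak{sl}_n$. First, every generalized Springer map in type $\mathsf{A}$ is birational, so birational induction coincides with ordinary induction and, consequently, birational sheets coincide with sheets; moreover by \cite{BK79} the sheets of $\mathfrak{sl}_n$ are pairwise disjoint and the $G$-module structure of $\C[\Orb]$ is constant along each sheet. The forward implication is then immediate, and for the converse I would show that the sheet is reconstructible from $\C[\Orb]$: the sheet is determined by the centralizer type of the semisimple part together with the induced nilpotent datum, and one checks these are read off from the graded $G$-module structure of $\C[\Orb]$. Since sheets are disjoint in type $\mathsf{A}$, no central-shift ambiguity intervenes and one obtains the conjecture outright for $\lieg=\mathfrak{sl}_n(\C)$, and its group form for $G=\SL_n(\C)$.

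The main obstacle is exactly this injectivity step---showing that non-isomorphic birational sheets cannot share a $G$-module---since Proposition \ref{prop_losevconj} gives no control in this direction. In the spherical case it is defused by passing to weight monoids, but it then hinges on a finite yet delicate verification that the explicit list of spherical birational sheets separates the tabulated monoids; the subtle points are the coincidences forced by $Z(G)$ in the group case and ensuring the group-to-Lie-algebra matching is compatible on the nose. Beyond the spherical and type $\mathsf{A}$ settings the argument does not extend, because $\C[\Orb]$ then acquires multiplicities and is no longer pinned down by a monoid, so the full conjecture would require a genuinely new invariant refining the weight monoid while remaining constant on, and separating, birational sheets.
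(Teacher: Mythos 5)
Your plan coincides with the paper's actual route in all but one place. The forward implication from Proposition \ref{prop_losevconj}, the reduction of the spherical case to injectivity of the weight-monoid assignment, the group-first strategy (classify the spherical birational sheets of $G$ simple simply-connected via the pairs $(M,Z(M)^\circ z,\orb^M)$, compare against the monoids tabulated in \cite{Costantini2010}, handle the $Z(G)$-shift coincidences such as the non-characteristic rigid class in type $\mathsf{C}_{2p}$), and the subsequent transfer to $\lieg_{sph}$ by matching each Levi-type datum $(L,Z(L)^\circ,\orb^L)$ with $(\mathfrak{l},\Orb^L)$ --- all of this is exactly Theorems \ref{thm_principal} and \ref{thm_principal_lie}, and you correctly note, as the paper does, that the full conjecture is not obtained this way.

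The genuine gap is in your type $\mathsf{A}$ converse. You assert that the sheet is ``reconstructible from $\C[\Orb]$'' because the centralizer type of the semisimple part and the induced nilpotent datum ``are read off from the graded $G$-module structure'' --- but this is precisely the non-trivial point, asserted rather than proved, and no mechanism for the reading-off is given. (Note also that $\C[\Orb]$ for a non-nilpotent orbit carries no natural grading; the paper compares ungraded $G$-module structures.) The paper's argument in Theorem \ref{thm_sln} is different and essential: since sheets of $\mathfrak{sl}_n(\C)$ are disjoint, each contains a unique nilpotent orbit \cite{Dix}, and the module structure is constant along sheets by \cite[Theorem 6.3]{BK79}, it suffices to show distinct nilpotent orbits have non-isomorphic rings. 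This separation is achieved through Broer's small modules: $[\C[\Orb_{\mathbf{d}^t}]:V]=\dim V^{L_{\mathbf{d}}}$ for the Richardson orbit of $L_{\mathbf{d}}$; for $V(\lambda)$ small, $\dim V(\lambda)^{L_{\mathbf{d}}}=[\ind_{S_{\mathbf{d}}}^{S_n}(\C):V(\lambda)_0]$ by \cite{Broer}; every Specht module $V_{\mathbf{f}}$ occurs as $V(\lambda)_0$ for some small $V(\lambda)$; and the unitriangularity of the Kostka matrix then yields $[\C[\Orb_{\mathbf{f}}]:V(\lambda)]=1\neq 0=[\C[\Orb_{\mathbf{d}}]:V(\lambda)]$ for $\mathbf{d}>\mathbf{f}$. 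Without such an explicit separating family of representations, your reconstruction claim is circular: it presupposes exactly the injectivity that must be established, and nothing in the multiplicity data hands you the centralizer type directly.
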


\subsection{Group case}
Before its introduction in the case of the adjoint action on the Lie algebra, induction was defined by Lusztig and Spaltenstein for unipotent conjugacy classes in a connected reductive algebraic group, see \cite{LS79}.
Consider a parabolic subgroup $P \leq G$ with Levi decomposition $P = LU_P$ and $\orb^L \in \mathcal{U}_L/L$.
 Then $P$ acts on $\overline{\orb^L} U_P $ via conjugacy and one can define the generalized Springer map:
\begin{equation}  \label{eq_GSM}
\gamma\colon G \times^P \overline{\orb^L} U_P \to G\cdot (\overline{\orb^L} U_P),\quad
g * x \mapsto gxg^{-1}.
\end{equation} 
The image of $\gamma$ is the closure of a single conjugacy class $\orb \in \mathcal{U}/G$, and $\ind_L^G (\orb^L) := \orb$ is the \emph{conjugacy class induced} from $(L, \orb^L)$.
When $\gamma$ is birational, we say that $\ind_L^G (\orb^L)$ is\textit{ birationally induced }from $(L, \orb^L)$.
If $\orb$ is a unipotent class in $G$ which cannot be induced (resp. birationally induced) from $(L, \orb^L)$ from any proper Levi subgroup $L$ of $G$ and $\orb^L \in \mathcal{U}_L/L$, we say that it is {\em rigid} (resp. {\em birationally rigid}).
All these notions are independent of the chosen parabolic subgroup $P$, see \cite[Lemma 3.5]{AF}.

\begin{rk}\label{rk_equiv}
Thanks to the bijective correspondences between parabolic subgroups, Levi subgroups, unipotent classes in $G$ and 
parabolic subalgebras, Levi subalgebras, nilpotent orbits in $\mathfrak{g}$, we have that $\gamma$ in \eqref{eq_gsm} is birational if and only if $\gamma$ in \eqref{eq_GSM}is so, see \cite[Remark 3.4]{AF}.
\end{rk}

\begin{defn} 
Consider a pseudo-Levi subgroup $M \leq G$, let $Z:=Z(M)$ and $z \in Z$.
We say that the connected component $Z^\circ z$ \textit{satisfies the regular property} (RP) for $M$ if
\begin{equation} \label{RP}
C_G(Z^\circ z)^\circ = M \tag{RP}.
\end{equation} 
\end{defn}

Observe that, for a pseudo-Levi subgroup $M \leq G$ and $z \in Z:=Z(M)$, we have that $Z^\circ z$ satisfies (\ref{RP}) for $M$ if and only if $Z^{reg} \cap Z^\circ z \neq \varnothing$ if and only if
$Z(M)= \langle Z^\circ, Z(G), z \rangle$ (see \cite[Remark 3.6]{CE1}) if and only if $M$ is a Levi subgroup of $C_G(z)^\circ$ (see \cite[Lemma 3.3]{AF}).

\begin{rk} \label{rk_centre}
Assume $G$ simple, let $M=L_\Theta$ for $\Theta \subset\widetilde{\Delta}$, let $s$ be such that $M=C(s)^\circ$ and set $Z:=Z(M)$.
Observe that $Z^\circ s$ satisfies \eqref{RP} for $M$.
Let $z \in Z$ such that $Z^\circ z$ satisfies \eqref{RP} for $M$, then, by \cite[Theorem 4.1]{CarnoBul} (see also \cite[Theorem 7]{SommersBC}), there is $w\in W$ such that $w(\Theta)=\Theta$ and $\dot{w}(Z^\circ z)\dot{w}^{-1}=Z^\circ \hat{z} s$  for a certain $\hat{z} \in Z(G)$.
Let $W_1=\{w\in W\mid wsw^{-1}s^{-1}\in Z^\circ Z(G)\}$, $W_2=\{w\in W\mid wsw^{-1}s^{-1}\in Z^\circ\}$.
The assignment  $w\mapsto wsw^{-1}s^{-1}Z^\circ$ defines a group homomorphism $ W_1\to \frac{Z^\circ Z(G)}{Z^\circ}$  with kernel $W_2$.
Then the number of different $G$-classes of pairs $(M, Z^\circ z)$ for a fixed $M$ with $Z^\circ z$ satisfying \eqref{RP} for $M$ is
\begin{equation} \label{eq_index}
d_M := \left[\frac{Z(G)}{Z(G)\cap Z^\circ}:W_1/W_2\right].
\end{equation}
\end{rk}

\begin{rk} \label{rk_itsalevi}
Let $L \leq G$ be a pseudo-Levi subgroup and let $Z := Z(L)$, then $L$ is a Levi subgroup if and only if $Z(L) = Z(G) Z(L)^\circ$ if and only if $Z(L)^\circ z$ satisfies \eqref{RP} for all $z \in Z(L)$.
\end{rk}

\begin{lem} \label{lem_cclevi}
Let $L \leq G$ be a Levi subgroup. Then  two  connected components of $Z := Z(L)$ are conjugate in $G$ if and only if they are equal.
\end{lem}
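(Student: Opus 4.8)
The \emph{if} direction is immediate, so the plan is to establish that two $G$-conjugate components of $Z$ must coincide. Suppose some $g \in G$ satisfies $g(Z^\circ z_1)g^{-1} = Z^\circ z_2$, where $Z^\circ z_1$ and $Z^\circ z_2$ are connected components of $Z$.

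First I would show that $g$ lies in $N_G(L)$. Because $L$ is a Levi subgroup, Remark~\ref{rk_itsalevi} ensures that every component $Z^\circ z$ satisfies \eqref{RP}, that is $C_G(Z^\circ z)^\circ = L$. Centralizers of conjugate subsets are conjugate, so from the hypothesis one gets $C_G(Z^\circ z_2)^\circ = g\,C_G(Z^\circ z_1)^\circ\, g^{-1}$; invoking \eqref{RP} on both sides collapses this to $L = gLg^{-1}$, i.e. $g \in N_G(L)$.

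Next I would argue that conjugation by any element of $N_G(L)$ fixes each connected component of $Z$ setwise. Since $g$ normalizes $L$ it normalizes the characteristic subgroups $Z = Z(L)$ and $Z^\circ = Z(L)^\circ$. Here the Levi hypothesis enters a second time: Remark~\ref{rk_itsalevi} yields $Z = Z(G)\,Z^\circ$, so I may write $z_1 = c z_0$ with $c \in Z(G)$ and $z_0 \in Z^\circ$. As $c$ is fixed under conjugation and $g z_0 g^{-1} \in Z^\circ$, I obtain $g z_1 g^{-1} = (g z_0 g^{-1})\,c \in Z^\circ c = Z^\circ z_1$, whence $g(Z^\circ z_1)g^{-1} = Z^\circ(g z_1 g^{-1}) = Z^\circ z_1$. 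Comparing with the hypothesis forces $Z^\circ z_1 = Z^\circ z_2$, as required.

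I do not anticipate a serious obstacle: the proof is carried entirely by Remark~\ref{rk_itsalevi}. The only point needing attention is to combine its two consequences in the right order --- first the regularity property \eqref{RP} to trap $g$ inside $N_G(L)$, then the decomposition $Z = Z(G)Z^\circ$ to rule out any nontrivial permutation of the components by the normalizer.
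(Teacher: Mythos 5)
Your proof is correct and takes essentially the same route as the paper: the paper's entire proof of Lemma \ref{lem_cclevi} is the single line ``This is clear from Remark \ref{rk_itsalevi}'', and your argument is a faithful unpacking of that remark, using \eqref{RP} to trap the conjugating element $g$ in $N_G(L)$ and the decomposition $Z(L)=Z(G)Z(L)^\circ$ to show that $N_G(L)$ fixes each connected component of $Z(L)$ setwise. Both steps check out, so there is nothing to fix.
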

\begin{proof}
This is clear from Remark \ref{rk_itsalevi}.
\end{proof}

A \emph{decomposition datum} of $G$ consists of a triple $(M, Z(M)^\circ z, \orb^M)$ such that:
\begin{enumerate}
\item[(a)]  $M$ is a pseudo-Levi subgroup of $G$;
\item[(b)] $Z(M)^\circ z$ is a connected component  of $Z(M)$ satisfying (\ref{RP}) for $M$;
\item[(c)] $\orb^M$ is a unipotent conjugacy class of $M$.
\end{enumerate}
To any element $su \in G$ we can associate its decomposition datum $(C_G(s)^\circ, Z(C_G(s)^\circ)^\circ s, \orb^{C_G(s)^\circ}_u)$: any decomposition datum is of this form.

The set of all decomposition data of $G$ is denoted by $\mathscr{D}(G)$ and $G$ acts on this set by simultaneous conjugacy on the triples.

Two elements $g_1, g_2 \in G$ are said to be \emph{Jordan equivalent} if their decomposition data are conjugate in $G$.
The \emph{Jordan class} of $su$ is the set of all elements which are Jordan equivalent to $su$: it is denoted $J(su)$.

If $\tau = (C_G(s)^\circ, Z(C_G(s)^\circ)^\circ s, \orb^{C_G(s)^\circ}_u)$ is the decomposition datum of $su$, then
$$J(su) = J(\tau) = G \cdot ( (Z(C_G(s)^\circ)^\circ s)^{reg} \orb^{C_G(s)^\circ}_u).$$
The group $G$ is partitioned into its Jordan classes, which are finitely many locally closed irreducible subvarieties parametrized by the finite set $\mathscr{D}(G)/G$.
Jordan classes are unions of equidimensional conjugacy classes.
The closure of a Jordan class is a union of Jordan classes.

Sheets for the conjugacy action of $G$ on itself were studied in \cite{CE1}. They are parametrized by the $G$-equivalence classes of decomposition data $\tau=(M, Z(M)^\circ s, \orb^M) \in \mathscr{D}(G)$ with $\orb^{M} \in \mathcal{U}_M/M$ rigid: the sheet corresponding to $\tau$ is
$$
S(\tau):= \overline{J(\tau)}^{reg} =  \bigcup_{z \in Z(M)^\circ s} G \cdot \left( s \ind_{M}^{C_G(z)^\circ} \orb^{M}\right).
$$

In the remainder of the paper, unless differently specified, we work under the assumption $G'$ {\em simply-connected}: as a consequence, centralizers of semisimple elements are connected.

As in \cite[\S 5.1]{AF}, for $(M, Z(M)^\circ s, \orb^M) \in \mathscr{D}(G)$ we define the set:
$$\bir (Z(M)^\circ s, \orb^M) = \{z \in Z(M)^\circ s \mid \ind_M^{C_G(z)} \orb^M \mbox{ is birationally induced}\}.$$
This is an open subset of $Z(M)^\circ s$, independent of the parabolic group chosen for induction (\cite[Remark 5.2, Proposition 5.1]{AF}):  it contains  $(Z(M)^\circ s)^{reg}$, since 
$\orb^M=\ind_M^M \orb^M$ is birationally induced from $(M, \orb^M)$.
For $\tau = (M, Z(M)^\circ s, \orb^M) \in \mathscr{D}(G)$, the \emph{birational closure} of $J(\tau)$ is $$\overline{J(\tau)}^{bir} := \bigcup_{z \in \bir(Z(M)^\circ s, \orb^M)} G \cdot \left(z \ind_M^{C_G(z)} \orb^M \right).$$
Then $J(\tau) \subseteq \overline{J(\tau)}^{bir} \subseteq \overline{J(\tau)}^{reg}$: 
in particular, being $\overline{J(\tau)}^{reg}$ irreducible, it is contained in a sheet, hence so is $\overline{J(\tau)}^{bir}$.
In fact $\overline{J(\tau)}^{bir}$ is an irreducible locally closed subvariety of $G$ and a union of Jordan classes (\cite[Proposition 5.2, Corollary 5.3]{AF}).

\begin{defn}
We define the set
$$\mathscr{BB}(G) := \{(M, Z(M)^\circ s, \orb^M) \in \mathscr{D}(G) \mid \orb^M \in \mathcal{U}_M/M \mbox{ birationally rigid}\}.$$
For $\tau \in \mathscr{BB}(G)$, we define the \emph{birational sheet} of $G$ corresponding to (the class of) $\tau$ as
$\overline{J(\tau)}^{bir}$.
\end{defn}

It follows from \cite[Theorem 5.1]{AF} that the birational sheets of $G$ form a partition of $G$.

\begin{rk} \label{rk_lonely}
For $G$ semisimple, a birational sheet coincides with a single conjugacy class if and only if it is $\orb_{su}$ with $s$ isolated 
and $\orb^{C_G(s)}_u$ a birationally rigid unipotent class of $C_G(s)$.
\end{rk}

\subsection{Criteria for birational induction}\label{criteria}
We recollect some results from \cite[Lemmas 3.2, 3.6]{AF}: they will be used to classify birational sheets containing spherical conjugacy classes.

\begin{lem} \label{lem_centrinpar}
Let $P \leq G$ be a parabolic subgroup with Levi decomposition $P=LU$, let $\orb^L \in \mathcal{U}_L/L$, let $\orb = \ind_L^G \orb^L$ and let $\gamma$ be as in \eqref{eq_GSM}.
The following are equivalent:
\begin{enumerate}
\item[(i)] $\gamma$ is birational;
\item[(ii)] for all $x \in  \orb \cap \overline{\orb^L} U$, we have $C_G(x) = C_P(x)$;
\item[(iii)] there exists $x \in  \orb \cap \overline{\orb^L} U$ such that $C_G(x) = C_P(x)$.\hfill $\square$
\end{enumerate} 
\end{lem}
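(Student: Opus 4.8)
The plan is to identify $\gamma$ as a generically finite dominant morphism and to compute its degree, showing that it equals the index $[C_G(x):C_P(x)]$ for a point $x$ of the dense piece $\orb\cap\overline{\orb^L}U$; all three equivalences then fall out at once. First I would record that $\gamma$ is proper, since $G/P$ is complete, and dominant onto the irreducible variety $\overline{\orb}$ (its image, by construction). A dimension count gives $\dim\bigl(G\times^P(\overline{\orb^L}U)\bigr)=\dim(G/P)+\dim\orb^L+\dim U=\dim\orb^L+2\dim U$, while the standard fact that induction preserves codimension in the unipotent variety yields $\dim\orb=\dim\orb^L+(\dim G-\dim L)=\dim\orb^L+2\dim U$. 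Hence source and image have the same dimension, so $\gamma$ is generically finite; as we are over $\C$ it is separable, and birationality is equivalent to the generic fibre being a single reduced point, i.e. to $\deg\gamma=1$.

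Next I would describe the fibres over the open orbit. For $y\in\orb$ one has a $C_G(y)$-equivariant identification $\gamma^{-1}(y)\cong\{gP\in G/P\mid g^{-1}yg\in\overline{\orb^L}U\}$, where $C_G(y)$ acts by left translation. Splitting this set according to the $P$-orbit of $g^{-1}yg$ inside $\orb\cap\overline{\orb^L}U$, the part lying over a fixed $P$-orbit $P\cdot x'$ is a single $C_G(y)$-orbit, isomorphic (after conjugating $y$ to $x'$) to $C_G(x')/C_P(x')$ and thus of dimension $\dim C_G(x')-\dim C_P(x')$.

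The crux is then to show that $\orb\cap\overline{\orb^L}U$ consists of a single $P$-orbit. Because the generic fibre of $\gamma$ is finite, each of the $C_G(y)$-orbits above must be finite, forcing $\dim C_P(x')=\dim C_G(x')$ for every $P$-orbit $P\cdot x'\subseteq\orb\cap\overline{\orb^L}U$. Consequently $\dim(P\cdot x')=\dim P-\dim C_P(x')=\dim P-\dim C_G(x')=\dim(\overline{\orb^L}U)$, so every such orbit is dense in the irreducible variety $\overline{\orb^L}U$; since two distinct orbits cannot both be dense, there is exactly one. It follows that $\deg\gamma=[C_G(x):C_P(x)]$ for any $x\in\orb\cap\overline{\orb^L}U$, and that this index is the same for all such $x$ (they are mutually $P$-conjugate). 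Now $\gamma$ is birational $\iff\deg\gamma=1\iff[C_G(x):C_P(x)]=1\iff C_G(x)=C_P(x)$, and as the index is constant on $\orb\cap\overline{\orb^L}U$ this holds for every $x$ exactly when it holds for one, giving (i)$\Leftrightarrow$(ii)$\Leftrightarrow$(iii). The main obstacle I anticipate is the third step: making rigorous the passage from finiteness of the generic fibre to the single-orbit statement, and keeping the scheme-theoretic degree aligned with the naive count of points in the $C_G(y)$-orbit decomposition (both licensed by separability in characteristic $0$).
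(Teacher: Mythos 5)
Your argument is correct; note that the paper itself gives no proof of Lemma~\ref{lem_centrinpar}, importing it from \cite[Lemmas 3.2, 3.6]{AF}, and your degree computation --- identifying the fibre of $\gamma$ over $y \in \orb$ with $C_G(x)/C_P(x)$ via the single dense $P$-orbit in $\orb \cap \overline{\orb^L}U$, so that $\deg \gamma = [C_G(x):C_P(x)]$, with birationality equivalent to degree one --- is exactly the standard mechanism behind those cited lemmas. The only step you leave implicit is that $\orb \cap \overline{\orb^L}U \neq \varnothing$, which is immediate since $\orb$ lies in the image $G \cdot (\overline{\orb^L}U) = \overline{\orb}$ of $\gamma$; with that remark, your re-derivation of the Lusztig--Spaltenstein single-orbit statement from generic finiteness (rather than quoting it) is sound in characteristic zero, the index $[C_G(x):C_P(x)]$ is constant on the unique $P$-orbit, and all three equivalences follow as you state.
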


\begin{lem} \label{lem_compgp}
Let $\phi\colon \mathcal{N} \to \mathcal{U}$ denote a Springer's isomorphism and let $\overline{G}$ be the adjoint group in the same isogeny class of $G$.
Let $\nu \in \mathcal{N}$.
Suppose that $C_{\overline{G}}(\nu)$ is connected.
 If $\orb_{\phi(\nu)} = \ind_L^G \orb^L$ for a Levi subgroup $L \leq G$ and $\orb^L \in \mathcal{U}_L/L$, then $\orb_{\phi(\nu)}$ is birationally induced from $(L, \orb^L)$.\hfill$\square$
\end{lem}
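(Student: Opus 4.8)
The plan is to verify the centralizer criterion of Lemma~\ref{lem_centrinpar}. Fix a parabolic subgroup $P = LU$ with Levi factor $L$ and write $\orb := \orb_{\phi(\nu)} = \ind_L^G \orb^L$. By Lemma~\ref{lem_centrinpar}(iii) it suffices to exhibit one $x \in \orb \cap \overline{\orb^L}U$ with $C_G(x) = C_P(x)$, equivalently (since $C_P(x) = C_G(x) \cap P$) with $C_G(x) \subseteq P$. I would establish $C_G(x) \subseteq P$ by controlling the identity component $C_G(x)^\circ$ geometrically and the full group $C_G(x)$ through the hypothesis on $\overline G$.

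First I would translate the hypothesis. A Springer isomorphism is $G$-equivariant, so $C_G(\nu) = C_G(\phi(\nu)) = C_G(x)$ for the representative $x = \phi(\nu)$. Let $\pi \colon G \to \overline G$ be the central isogeny onto the adjoint group, with $\ker \pi = Z(G)$. Since the adjoint action of $G$ on $\lieg$ factors through $\overline G$, one has $C_G(\nu) = \pi^{-1}(C_{\overline G}(\nu))$, whence $C_{\overline G}(\nu) \cong C_G(\nu)/Z(G)$ with identity component $\pi(C_G(\nu)^\circ)$. Thus $C_{\overline G}(\nu)$ is connected if and only if $C_G(\nu) = C_G(\nu)^\circ\, Z(G)$, i.e. $C_G(x) = C_G(x)^\circ\, Z(G)$. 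As this equality is preserved under conjugation (because $Z(G)$ is central), it holds for every element of the class $\orb$.

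Next I would show $C_G(x)^\circ \subseteq P$ for every $x \in \orb \cap \overline{\orb^L}U$. The dimension count $\dim\bigl(G \times^P \overline{\orb^L}U\bigr) = \dim G - \dim L + \dim \orb^L = \dim \orb$ shows that $\gamma$ is a surjective, generically finite morphism onto $\overline\orb$; being $G$-equivariant and $\orb$ a single orbit lying in the dense locus of $\overline\orb$, all fibres of $\gamma$ over $\orb$ are finite. The connected group $C_G(x)^\circ = (G_x)^\circ$ therefore acts trivially on the finite fibre $\gamma^{-1}(x)$, and in particular fixes the point $1 * x$. A direct computation in $G \times^P \overline{\orb^L}U$ identifies $\mathrm{Stab}_G(1 * x)$ with $C_P(x)$, so $C_G(x)^\circ \subseteq C_P(x) \subseteq P$. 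Combining this with $Z(G) \subseteq P$ and the previous paragraph gives $C_G(x) = C_G(x)^\circ\, Z(G) \subseteq P$, hence $C_G(x) = C_P(x)$, and Lemma~\ref{lem_centrinpar} yields that $\gamma$ is birational; that is, $\orb_{\phi(\nu)}$ is birationally induced from $(L, \orb^L)$.

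The step that needs the most care is the second paragraph: transporting the connectedness assumption correctly across the Springer isomorphism and the isogeny $\pi$, and the ensuing component-group bookkeeping that reduces the hypothesis to $C_G(x) = C_G(x)^\circ\, Z(G)$. By contrast, the geometric input (the dimension count, the finiteness of the fibres over $\orb$, and the identification $\mathrm{Stab}_G(1 * x) = C_P(x)$) is routine once the criterion of Lemma~\ref{lem_centrinpar} is available.
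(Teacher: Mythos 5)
Your argument is correct, and there is no gap: the translation of the hypothesis through the Springer isomorphism and the isogeny $\pi$ (giving $C_G(x)=C_G(x)^\circ Z(G)$ for every $x$ in the class), the finiteness of the fibres of $\gamma$ over $\orb$, the identification $\mathrm{Stab}_G(1*x)=C_P(x)$, and the conclusion via Lemma~\ref{lem_centrinpar}(iii) are all sound. Note that the paper itself gives no proof of this lemma --- it is recollected verbatim from [AF, Lemma 3.6], which is why it ends with an empty proof box --- so strictly speaking there is nothing in the paper to compare against; but your proof is exactly the standard one that the cited source relies on. Your second paragraph (the finite-fibre argument showing $C_G(x)^\circ\subseteq C_P(x)$) is in substance the classical Lusztig--Spaltenstein fact that the identity component of the centralizer of an element of the induced class lies in the inducing parabolic, so the only genuinely new content is the component-group bookkeeping reducing the connectedness of $C_{\overline{G}}(\nu)$ to $C_G(x)=C_G(x)^\circ Z(G)$, and you carry that out correctly, including the observation that this property is conjugation-invariant and that $Z(G)\subseteq P$ closes the argument.
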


\begin{rk} \label{rk_typea}
Let $G = {\rm SL}_{n}(\C)$,
 then the condition in Lemma  \ref{lem_compgp} is always fulfilled,
hence a unipotent class in $G$ (resp. a nilpotent orbit in $\lieg$) is rigid if and only if it is birationally rigid if and only if it is $\{1\}$ (resp. $\{0\})$, see \cite[Example 3.4]{AF}. Moreover,
 sheets coincide with sheets in $\mathfrak{g}$ and in $G$, see \cite[Corollary 5.4]{AF}.
\end{rk} 

\subsection{Birationally rigid unipotent classes} \label{sss_bruc}
In this section we assume $G$ simple and we recollect the complete list of birationally rigid conjugacy classes in $\mathcal{U}$ (equivalently of birationally rigid adjoint orbits in $\mathcal{N}$).

Namikawa gave in \cite{Nami2009} a criterion to test when a nilpotent orbit is birationally rigid for simple classical Lie algebras.
If $\mathfrak{g}$ is of type $\mathsf{A}$, then the only birationally rigid orbit is the only rigid orbit, i.e. the null orbit.
Now let $\mathfrak{g}$ be of type $\mathsf{B, C, D}$. Let $\mathbf{d} = [d_1, \dots, d_r]$ denote the partition corresponding to a nilpotent orbit $\Orb$. Then $\Orb$ is birationally rigid in $\mathfrak{g}$ if and only if $\mathbf{d}$ has \textit{full members}, i.e. $1 = d_r$ and $d_i - d_{i+1} \leq 1$ for all $i = 1, \dots, r-1$, with the exception of the case $\mathbf{d}=[2^{n-1},1^2]$ in $\mathsf{D}_n$ for $n = 2m+1, m \geq 1$, which is birationally induced as a Richardson orbit.

Fu worked out the exceptional types in \cite{Fu2010}: birationally rigid orbits coincide with rigid ones, except in  
type $\mathsf{E}_7$, where also $A_2 + A_1$ and $A_4 + A_1$ are birationally rigid,
and in type $\mathsf{E}_8$, where also $A_4 + A_1$ and $A_4 + 2A_1$ are birationally rigid.

For a complete list of rigid nilpotent orbits in the exceptional cases, see \cite[Appendix 5.7]{McGovern2002}. It follows that every spherical nilpotent orbit is (birationally) rigid, apart from $2A_1$  in type $\mathsf{E}_6$ and $(3A_1)''$ in type $\mathsf{E}_7$.

\begin{rk} 
Recall from  \cite[Lemma 3.9]{Borho} that all nilpotent orbits $\Orb$ in $\mathfrak{g}$ simple are characteristic, except for:
\begin{enumerate}[label=(\arabic*)]
\item $\mathfrak{g}$ of type ${\sf D}_4$: Aut$(\mathfrak{g})$ acts transitively on  $\{\Orb_{[4^2]}, \Orb_{[4^2]}', \Orb_{[5,1^3]}\}$ and on  $\{\Orb_{[2^4]}, \Orb_{[2^4]}', \Orb_{[3,1^5]}\}$.
\item $\mathfrak{g}$ of type ${\sf D}_{2m}, m \geq 3$: 
the graph automorphism permutes
$\Orb_\mathbf{d}$ and $\Orb'_\mathbf{d}$ for every very even partition $\mathbf{d} \vdash 4m$.
\end{enumerate}
It follows that all  birationally rigid nilpotent orbits in simple Lie algebras are characteristic, analogously for all birationally rigid unipotent classes in simple algebraic groups.
\end{rk}

\subsection{Birational sheets and translation by central elements}
Let $\tau:=(M,Z(M)^\circ s,\orb^M) \in \mathscr{D}(G)$. For each $z\in Z(G)$, let $\tau_z := (M, Z(M)^\circ z s,\orb^M)$. Then we have 
$\overline{J(\tau_z)}^{bir} = z \overline{J(\tau)}^{bir}$, so that the union of all  $\overline{J(\tau_z)}^{bir}$ as $z$ varies in $Z(G)$ is
\begin{equation}\label{eq_centre}
Z(G)\overline{J(\tau)}^{bir} := \bigcup_{z \in Z(G)} z \overline{J(\tau)}^{bir}.
\end{equation}
We shall be interested in $Z(G)\overline{J(\tau)}^{bir}$ for $\tau\in \mathscr{BB}(G)$: to describe it, it is enough to describe $\overline{J(\tau)}^{bir}$ and to count the number of birational sheets in $Z(G)\overline{J(\tau)}^{bir}$.

\begin{rk}\label{rk_characteristic}
 For $G$ simple, let $\tau := (M, Z(M)^\circ z, \orb^M) \in \mathscr{D}(G)$ and set $Z := Z(M)$.
We have seen that the number of different $G$-classes of pairs $(M, Z^\circ z)$ for a fixed $M$, with $Z^\circ z$ satisfying \eqref{RP} for $M$ equals the index $d_M = \left[ \frac{Z(G)}{Z(G)\cap Z^\circ} : W_1/W_2 \right]$, defined in Remark \ref{rk_centre}.
 If $\orb^M$ is characteristic in $M$,
 the number of different $G$-classes of triples $(M, Z(M)^\circ z, \orb^M)$ for fixed $M$ and $\orb^M$, with $Z^\circ z$ satisfying \eqref{RP} for $M$ is again the index $d_M$.
\end{rk}

\section{The ring of regular functions as an invariant}
In this section we analyse the behaviour of the ring of regular functions on adjoint orbits (resp. on conjugacy classes) belonging to the same Jordan class.

It is proven in \cite[\S 3.7]{broer_lectures} that all adjoint orbits in the same Jordan class are isomorphic as $G$-homogeneous spaces: in particular their ringsof regular functions are isomorphic as $G$-modules.
 We address the similar problem in the group case; we start with an easy observation.

\begin{rk} \label{rk_uptocentre} 
 Let $su \in G$ and $z \in Z(G)$.
Then $\orb_{su}$ and  $\orb_{zsu}=z\orb_{su}$ are isomorphic $G$-homogeneous spaces: $C_G(su) = C_G(zsu)$ and $\C [\orb_{su}] \simeq_G \C[\orb_{zsu}]$.
\end{rk}

\begin{prop} \label{prop_iso_jc_scgp}
Let $G$  be connected reductive with $G'$ simply-connected and let $J$ be a Jordan class in $G$.
For any pair of classes $\orb_1, \orb_2 \subset Z(G)J$ we have  $\C [\orb_1] \simeq_G \C[\orb_1]$.
\end{prop}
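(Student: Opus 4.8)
The plan is to prove the statement (reading the right-hand side as $\C[\orb_2]$) in two stages: first that any two conjugacy classes lying in a \emph{single} Jordan class have isomorphic coordinate rings, and then to pass from $J$ to $Z(G)J$ by means of the central-translation invariance recorded in Remark \ref{rk_uptocentre}. This is the group analogue of the fact, quoted above from \cite[\S 3.7]{broer_lectures}, that all adjoint orbits in one Jordan class of $\lieg$ are isomorphic as $G$-homogeneous spaces. The crux is the geometric claim that all conjugacy classes contained in one Jordan class $J$ have $G$-conjugate point-stabilizers, hence are isomorphic as $G$-homogeneous spaces; granting this, two such classes are both of the form $G/H$ for a fixed $H$ up to conjugacy, and the $G$-equivariant isomorphism $G/H_1 \cong G/H_2$ afforded by a conjugating element induces a $G$-module isomorphism of their rings of regular functions.

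To establish the claim, I would write $J = J(su)$ with $M := C_G(s)^\circ$, so that
$$J = G \cdot \left( (Z(M)^\circ s)^{reg}\, \orb^{M}_u \right),$$
and examine an arbitrary element $s'u'$ with $s' \in (Z(M)^\circ s)^{reg}$ and $u' \in \orb^{M}_u$. Every element of the coset $Z(M)^\circ s$ centralizes $M$, so $M \subseteq C_G(s')^\circ$, with equality exactly on the regular locus; thus $C_G(s')^\circ = M$. At this point I invoke the standing hypothesis that $G'$ is simply-connected, which forces centralizers of semisimple elements to be connected and hence gives $C_G(s') = M$ on the nose. Since $s'u'$ is the Jordan decomposition of $s'u'$, an element of $G$ commutes with $s'u'$ if and only if it commutes with both $s'$ and $u'$, so $C_G(s'u') = C_G(s') \cap C_G(u') = C_M(u')$. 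As $u'$ ranges over the unipotent class $\orb^M_u$ of $M$ these stabilizers form a single $M$-conjugacy class, and conjugating $s'u'$ within $J$ by the ambient $g \in G$ conjugates the stabilizer correspondingly. Hence every point-stabilizer of an element of $J$ is $G$-conjugate to $C_M(u)$, which proves the claim and the first stage.

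For the second stage I would observe that $Z(G)J = \bigcup_{z \in Z(G)} zJ$, where each translate $zJ = J(zsu)$ is again a Jordan class; since Jordan classes partition $G$ into unions of conjugacy classes, every class $\orb \subset Z(G)J$ sits inside exactly one such translate. Given $\orb_1, \orb_2 \subset Z(G)J$, I choose $z_1, z_2 \in Z(G)$ with $\orb_i \subset z_i J$; then $z_1^{-1}\orb_1$ and $z_2^{-1}\orb_2$ are conjugacy classes contained in $J$, so the first stage yields $\C[z_1^{-1}\orb_1] \simeq_G \C[z_2^{-1}\orb_2]$, while Remark \ref{rk_uptocentre} gives $\C[\orb_i] \simeq_G \C[z_i^{-1}\orb_i]$ for $i=1,2$. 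Chaining these isomorphisms produces $\C[\orb_1] \simeq_G \C[\orb_2]$.

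I expect the main obstacle to be the exact identity $C_G(s') = M$ in the first stage: it is precisely here that simple-connectedness of $G'$ is indispensable, since otherwise $C_G(s')$ may properly contain its identity component $M$ and the stabilizers $C_G(s'u')$ need no longer be conjugate across the class. Everything else is formal, resting only on the explicit description of Jordan classes and on Remark \ref{rk_uptocentre}.
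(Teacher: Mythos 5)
Your proposal is correct and takes essentially the same route as the paper: the heart of both arguments is the computation $C_G(s'u') = C_G(s') \cap C_G(u') = M \cap C_G(u') = C_M(u')$ for $s'$ in the regular locus of $Z(M)^\circ s$ (with $C_G(s') = M$ forced by the connectedness of semisimple centralizers when $G'$ is simply-connected), showing all classes in $J$ are isomorphic $G$-homogeneous spaces, followed by the reduction of the central shift to Remark \ref{rk_uptocentre}. You merely make explicit two steps the paper leaves implicit (the identity $C_G(s')=M$ and the bookkeeping with the translates $zJ$), and you correctly read the statement's conclusion as $\C[\orb_1]\simeq_G\C[\orb_2]$.
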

\begin{proof}
Suppose $J = J(\tau)$ for $\tau = (M, Z(M)^\circ s, \orb^M_u) \in \mathscr{D}(G)$.
We have $J(\tau) = G \cdot ((Z(M)^\circ s)^{reg}u)$ 
 and any conjugacy class $\orb$ in $J(\tau)$ is of the form $\orb=\orb_{s_1u}$ with $s_1\in
(Z(M)^\circ s)^{reg}$. Then $C_G(s_1 u) = C_G(s_1) \cap C_G(u) = M \cap C_G(u) = C_M(u)$, so that all conjugacy classes in $J$ are isomorphic as $G$-homogeneous spaces. The statement follows from Remark \ref{rk_uptocentre}.
\end{proof}

Notice that Proposition \ref{prop_iso_jc_scgp} does not hold in general for $G$ not simply-connected, as illustrated by the following example.

\begin{eg} \label{ss_psl2}
Consider $G = \mathrm{SL}_2(\C)$, let $\overline{G} = \mathrm{PSL}_2(\C)$ and let $\pi \colon G \to \overline{G}$, $\pi(g) = \bar{g}$ be the isogeny.
Let us consider the torus $T \leq G$ given by diagonal elements: $T = \{ t_k = \mathrm{diag}[k, k^{-1}] \mid k \in \C^\times\}$.
We have the following situation for the centralizer of a regular element $\overline{t}_k, k \neq \pm1$:
$$C_{\overline{G}}(\bar{t}_k)= \begin{cases}
\overline{H} := N_{\overline{G}}(\overline{T}) &\mbox{ if } k = \pm i;\\ 
\overline{T} = \overline{H}^\circ & \mbox{ if } k \notin \{\pm i,\pm 1\}\end{cases}$$

Observe that $\overline{G} \cdot( \overline{T}^{reg})$ is the Jordan class in $\overline{G}$ consisting of regular semisimple elements. Along $\overline{G} \cdot( \overline{T}^{reg})$ neither the $\overline{G}$-module structure nor the $\overline{G}$-homogeneous space structure of conjugacy classes is preserved:
\begin{table}[H]
\centering
{\renewcommand{\arraystretch}{1.5}
\begin{tabular}{|c|c|c|}
\hline
$k$  & $\C [ \orb^{\overline{G}}_{\bar{t}_k} ]$  & $\Lambda ( \orb^{\overline{G}}_{\bar{t}_k} )$ \\ \hline
$k = \pm i$ & 
$\C[\overline{G}/\overline{H}]$
 &
 $4 n \omega$        \\ \hline
$k \in \C^\times \setminus \{\pm 1, \pm i\}$ & 
$\C[\overline{G}/\overline{T}]$
 &
 $ 2n \omega$        \\ \hline
   \end{tabular}}
    \caption{Regular semisimple spherical classes in $\mathrm{PSL}_2(\C)$.}
  \label{tab_psl2}
\end{table}
\end{eg}

In the following, we show that the (birational) sheets in $\mathfrak{sl}_n(\C)$ are the unions of adjoint orbits whose rings of regular functions are isomorphic as $G$-modules, therefore proving Losev's conjecture in type $\mathsf{A}$.
We are indebted to Eric Sommers for suggesting the use of small modules in the proof of the following statement.

\begin{thm} \label{thm_sln}
Let $G$ be simple, with $\Lie(G) = \mathfrak{sl}_n(\C)$.
Then adjoint orbits $\Orb_1$ and $\Orb_2$ are in the same (birational) sheet of $\mathfrak{sl}_n(\C)$ if and only if $\C[\Orb_1]\simeq_G \C[\Orb_2]$ if and only if $\Lambda(\Orb_1) = \Lambda(\Orb_2)$.
\end{thm}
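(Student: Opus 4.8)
The statement is a chain of equivalences, and the implication $\C[\Orb_1]\simeq_G\C[\Orb_2]\implies\Lambda(\Orb_1)=\Lambda(\Orb_2)$ is immediate (the weight monoid is read off from the $G$-module structure), while the reverse is equally immediate. Likewise, ``same (birational) sheet $\implies \C[\Orb_1]\simeq_G\C[\Orb_2]$'' is exactly Proposition \ref{prop_losevconj} together with Remark \ref{rk_typea}, which tells us that in type $\mathsf{A}$ sheets and birational sheets coincide (the only birationally rigid orbit being $\{0\}$, so every sheet is a birational sheet). So the entire content of the theorem is the single implication
\[
\Lambda(\Orb_1)=\Lambda(\Orb_2)\ \Longrightarrow\ \Orb_1,\Orb_2\text{ lie in the same sheet.}
\]
The plan is to prove this contrapositively or by directly recovering the sheet invariants from $\Lambda(\Orb)$.

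**Recovering the sheet from the weight monoid.**
In type $\mathsf{A}$ each sheet $\mathfrak{S}(\mathfrak{l},\Orb^L)$ is determined by the pair $(\mathfrak{l},\Orb^L)$ with $\Orb^L$ rigid, hence $\Orb^L=\{0\}$; so a sheet is determined purely by the $G$-conjugacy class of the Levi $\mathfrak{l}$, equivalently by its partition-type data. Concretely, by \cite{BK79, Borho} the sheet containing $\Orb$ is recorded by the pair consisting of the dimension $\dim\Orb$ (equivalently $\dim\mathfrak{z}(\mathfrak{l})$, the rank of the sheet) and the conjugacy class of the semisimple part's centralizer $\mathfrak{l}=\mathfrak{c_g}(\xi_s)$, which for $\mathfrak{sl}_n$ is encoded by the multiplicities of the eigenvalues of $\xi_s$ — i.e.\ a composition of $n$. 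The strategy I would follow is to show that $\Lambda(\Orb)$ determines both of these. First, $\dim\Orb$ is recoverable from the growth rate (Gelfand--Kirillov dimension) of $\bigoplus_{\lambda\in\Lambda(\Orb)}V(\lambda)$, since $\C[\Orb]$ has dimension $\dim\Orb$ as a variety. Second — and this is where the \emph{small modules} suggested by Sommers enter — one restricts attention to the \emph{small} dominant weights $\lambda$ (those whose weights do not include twice a root, equivalently the $V(\lambda)$ appearing in $\C[\mathfrak{g}]$ with controlled behaviour) and reads off from which small $V(\lambda)$ occur in $\Lambda(\Orb)$ the finer invariant that pins down the Levi class.

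**Why small modules.**
The point of small modules is that their multiplicities in $\C[\Orb]$ are governed by the invariants of $\Orb$ in a transparent, computable way: for $\mathfrak{sl}_n$ the small fundamental-type representations $V(\omega_1+\omega_{n-1})$ (the adjoint) and the exterior-power modules detect, through their appearance or non-appearance in low degree of $\C[\Orb]$, the partition $\mathbf{d}^t$ governing the nilpotent part and the eigenvalue-multiplicity composition governing $\xi_s$. I would make precise a lemma of the form: \emph{the set of small $\lambda$ lying in $\Lambda(\Orb)$, together with the degree in which each first appears, determines the $G$-conjugacy class of $(\mathfrak{c_g}(\xi_s),\Orb^L)$ and hence the sheet.} Combined with the dimension count, this gives that $\Lambda(\Orb_1)=\Lambda(\Orb_2)$ forces identical sheet data.

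**Main obstacle.**
The genuinely delicate step is the lemma in the previous paragraph: extracting the \emph{exact} Levi class (not merely $\dim\Orb$) from the weight monoid. Two orbits of the same dimension can lie in different sheets, so the coarse GK-dimension argument alone is insufficient, and one must verify that the small modules separate all sheets — i.e.\ that no two inequivalent sheets produce the same set of small highest weights. I expect this to require an explicit analysis, using the branching/degree formulas for small $V(\lambda)$ in coordinate rings of $\mathfrak{sl}_n$-orbits, to confirm injectivity of the map (sheet) $\mapsto$ (restriction of $\Lambda$ to small weights). Once that separation is established the theorem follows, since all the other implications in the chain are formal.
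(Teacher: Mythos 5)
Your outer architecture is right: the two formal equivalences, and ``same sheet $\Rightarrow$ isomorphic rings'' via Proposition \ref{prop_losevconj} plus Remark \ref{rk_typea}, match the paper (which proves that direction slightly differently, via \cite[Theorem 6.3]{BK79} using normality of orbit closures and connectedness of centralizers in type $\mathsf{A}$). But your plan for the one substantive implication has a genuine gap, in two places. First, the invariants you propose to extract from $\Lambda(\Orb)$ are not available from the hypothesis: $\Lambda(\Orb)$ (equivalently $\C[\Orb]$ as an \emph{abstract} $G$-module) carries no grading --- for a non-nilpotent orbit $\C[\Orb]$ is not even naturally graded --- so ``the degree in which each small $\lambda$ first appears'' is not part of the data you are allowed to use; similarly, Gelfand--Kirillov dimension requires the algebra structure, not just the module structure. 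Second, and more importantly, the crucial separation lemma (``small weights pin down the Levi class'') is exactly the whole content of the theorem, and you leave it as an expectation rather than proving it; gesturing at $V(\omega_1+\omega_{n-1})$ and exterior powers does not supply the mechanism, and occurrence of those few modules alone does not separate all sheets.

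The paper's proof closes this gap by a reduction you miss, which makes the semisimple data and the dimension count unnecessary: since sheets of $\mathfrak{sl}_n(\C)$ are disjoint and each contains a unique nilpotent orbit \cite{Dix}, and since the $G$-module structure is constant along sheets (the first direction), it suffices to show that \emph{distinct nilpotent orbits} have non-isomorphic rings of functions. That separation is then carried out with small modules, but via a precise chain: Broer's identity $V(\lambda)^L=(V(\lambda)_0)^{W_L}$ for $V(\lambda)$ small, Frobenius reciprocity $\dim (V(\lambda)_0)^{W_L}=[\ind_{W_L}^W(\C):V(\lambda)_0]$, the fact (special to type $\mathsf{A}$) that every Specht module $V_{\mathbf{f}}$ is realized as $V(\lambda)_0$ for some small $V(\lambda)$ \cite{Broer}, the multiplicity formula $[\C[\Orb_{\mathbf{d}^t}]:V]=\dim V^{L_{\mathbf{d}}}$ for Richardson orbits, and the unitriangularity of the Kostka matrix in \eqref{Kotska}: for partitions $\mathbf{d}>\mathbf{f}$, the small $V(\lambda)$ with $V(\lambda)_0\simeq V_{\mathbf{f}}$ occurs with multiplicity $1$ in $\C[\Orb_{\mathbf{f}^t}]$ but $0$ in $\C[\Orb_{\mathbf{d}^t}]$, since $U_{\mathbf{d}}$ contains only Specht modules $V_{\mathbf{g}}$ with $\mathbf{g}\geq\mathbf{d}$. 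Note this argument separates sheets using only \emph{membership} in the weight monoid (multiplicity $1$ versus $0$), which is why equality of monoids --- and not just of module structures --- already forces equality of sheets; your proposal, by contrast, needs data ($\dim$, degrees) that the monoid does not provide.
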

\begin{proof}
In one direction, we can assume $G$ adjoint. By \cite{Dix},
sheets of $\mathfrak{sl}_n(\C)$ are disjoint and are parametrized by the unique nilpotent orbit which they contain.
The assumptions of \cite[Theorem 6.3]{BK79} hold, since $\overline{\Orb}$ is normal for each $\Orb \in \mathcal{N}/G$ and $C_G(\nu)$ is connected for all $\nu \in \mathcal{N}$.
As a consequence, the $G$-module structure of the rings of regular functions on adjoint orbits is preserved along sheets.
To prove the converse implication, it is enough to show that the rings of regular functions on two distinct nilpotent orbits of $\mathfrak{sl}_n(\C)$ have non-isomorphic $G$-module structure. 
Let $\C$ denote the trivial representation of a group.
Let $\lambda \in \poids^+$ and let $V(\lambda)_0 = V(\lambda)^T$ be the zero weight subspace in $V(\lambda)$. 
Then $V(\lambda)_0$ is a $W$-module, in general reducible.
Let $L$ be a Levi subgroup of $G$ with Weyl group $W_L$.
By \cite[Proof of Corollary 1]{Broer}, if $V(\lambda)$ is small (i.e. if twice a root never occurs as a weight of $\lambda$), we have $V(\lambda)^L = (V(\lambda)_0)^{W_L} $ and, by Frobenius reciprocity, $\dim (V(\lambda)_0)^{W_L} = [\ind_{W_L}^W (\C):V(\lambda)_0]$.
In the case of $\mathfrak{sl}_n(\C)$, for every irreducible $S_n$-module $M$ there exists $\lambda$ in $\poids^+\cap \Z \Phi$ such that $V(\lambda)$ is small and $V(\lambda)_0 \simeq_{S_n} M$, see \cite[example p. 389]{Broer}.
Conjugacy classes of Levi subgroups of $\SL_n(\C)$ are indexed by partitions $\mathbf{d}=[d_1,\dots,d_{k}]$ of $n$ with $d_k>0$: the induced Richardson nilpotent class is $\Orb_{\mathbf{d}^t}$.
Let $L_{\mathbf{d}}$ be the standard Levi subgroup, with Weyl group $S_{\mathbf{d}}=S_{d_1}\times \cdots\times S_{d_k}$, corresponding to the partition $\mathbf{d}=[d_1,\dots,d_{k}]$.
We  know that $[\C [ \Orb_{\mathbf{d}^t} ] : V]
 = \dim V^{L_{\mathbf{d}}}$ for every simple ${\rm SL}_n(\C)$-module $V$.
We put $U_{\mathbf{d}}=\ind_{S_{\mathbf{d}}}^{S_n} (\C)$ and denote by $V_{\mathbf{d}}$ the simple $S_n$-module (Specht module) corresponding to ${\mathbf{d}}$. Then
\begin{equation} \label{Kotska}
U_{\mathbf{d}}=V_{\mathbf{d}}\oplus\bigoplus_{{\mathbf{f}}>{\mathbf{d}}}K_{\mathbf{f \, d}}V_{\mathbf{f}}
\end{equation} 
where the coefficients $K_{{\mathbf{f \, d}}}$ are the Kostka numbers and $<$ is the lexicographic total order on partitions of $n$.
Let $\mathbf{d}$, $\mathbf{f}$ be different partitions of $n$: we may assume $\mathbf{d} > \mathbf{f}$.
By the previous discussion, there exists a small simple ${\rm SL}_n(\C)$-module $V(\lambda)$ such that $V_{\mathbf{f}}\simeq_{S_n}V(\lambda)_0$. Then
$[\C[\Orb_{\mathbf{f}}]:V(\lambda)] = 1 \neq 0 = [\C[\Orb_{\mathbf{d}}]:V(\lambda)]$ and this allows to conclude. 
\end{proof}

Thanks to the fact that pseudo-Levi subgroups of ${\rm SL}_n(\C)$ are Levi subgroups, we get the following group analogue of Theorem \ref{thm_sln}.
\begin{thm} \label{thm_SLn}
Let $G = \mathrm{SL}_n(\C)$, $\orb_1$ and $\orb_2$ be conjugacy classes of $G$ and
let $S_1$ (resp. $S_2$) be the (birational) sheet containing $\orb_1$ (resp. $\orb_2$).
Then $\C[\orb_1] \simeq_G \C[\orb_2]$ if and only if $\Lambda(\orb_1) = \Lambda(\orb_2)$ if and only if $S_2= zS_1$ for some $z \in Z(G)$.
\end{thm}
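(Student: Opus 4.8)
The plan is to transfer the whole statement to the Lie algebra, where it is already settled by Theorem \ref{thm_sln}, by attaching to each conjugacy class an adjoint orbit with \emph{literally the same stabiliser}. By Remark \ref{rk_typea} birational sheets and sheets coincide for $\SL_n(\C)$, so the parenthetical ``(birational)'' is harmless and I simply speak of sheets.

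First I would set up the following device. Fix a Springer isomorphism $\phi\colon \mathcal N \to \mathcal U$ as in Lemma \ref{lem_compgp}. Given $\orb_{su}$, put $M := C_G(s)$, which is a Levi subgroup since pseudo-Levi subgroups of $\SL_n(\C)$ are Levi subgroups, and write $\mathfrak m = \Lie M$. As $u \in M$, the element $\nu := \phi^{-1}(u)$ lies in $\mathcal N_{\mathfrak m}$, and $G$-equivariance of $\phi$ gives $C_M(u) = C_M(\nu)$. Choosing $\xi \in \mathfrak{z}(\mathfrak{m})^{reg}$ so that $C_G(\xi) = M$ (a genuine equality, since centralizers of semisimple elements are connected here), the element $\xi + \nu \in \mathfrak{sl}_n(\C)$ has Jordan decomposition $(\xi,\nu)$ and
$$C_G(su) = C_G(s)\cap C_G(u) = C_M(u) = C_M(\nu) = C_G(\xi)\cap C_G(\nu) = C_G(\xi+\nu).$$
Hence $\orb_{su} \cong G/C_G(\xi+\nu) \cong \Orb_{\xi+\nu}$ as $G$-varieties, so $\C[\orb_{su}] \simeq_G \C[\Orb_{\xi+\nu}]$ and $\Lambda(\orb_{su}) = \Lambda(\Orb_{\xi+\nu})$.

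With this identification the first equivalence is immediate: both $\C[\orb_1]\simeq_G\C[\orb_2]$ and $\Lambda(\orb_1)=\Lambda(\orb_2)$ pass through the associated orbits $\Orb_i := \Orb_{\xi_i+\nu_i}$, for which Theorem \ref{thm_sln} already gives $\C[\Orb_1]\simeq_G\C[\Orb_2] \Leftrightarrow \Lambda(\Orb_1)=\Lambda(\Orb_2)$. To bring in the sheets I would check that $\Orb_{\xi+\nu}$ lies in the Lie algebra sheet whose rigid Levi is $\Lie M_0$, where $M_0$ is the Levi parametrising the group sheet $S$ of $\orb_{su}$: the unipotent class $\orb_u^{M}=\ind_{M_0}^{M}\{1\}$ corresponds under $\phi$ to $\Orb_\nu^{M}=\ind_{\mathfrak m_0}^{\mathfrak m}\{0\}$ (compatibility of Lusztig--Spaltenstein and Lie-algebra induction, cf.\ Remark \ref{rk_equiv}), and in type $\mathsf A$ rigid equals trivial (Remark \ref{rk_typea}), so the small Levis match. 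By Theorem \ref{thm_sln} the module $\C[\Orb_{\xi+\nu}]$ depends only on this Lie algebra sheet, i.e.\ only on the $G$-conjugacy class of $M_0$.

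Finally I would prove $S_2 = zS_1 \Leftrightarrow \C[\orb_1]\simeq_G\C[\orb_2]$. For the forward direction, $z\orb_1 \subseteq zS_1 = S_2$ and $\C[z\orb_1]\simeq_G\C[\orb_1]$ by Remark \ref{rk_uptocentre}; since $z\orb_1$ and $\orb_2$ lie in the same sheet $S_2$, they share the Levi $M_0$, so their associated orbits lie in one Lie algebra sheet and $\C[z\orb_1]\simeq_G\C[\orb_2]$. Conversely, $\C[\orb_1]\simeq_G\C[\orb_2]$ forces $\Orb_1,\Orb_2$ into one Lie algebra sheet, hence the small group Levis are $G$-conjugate, say both equal $M_0$, and $S_i = S(M_0, Z(M_0)^\circ\sigma_i,\{1\})$. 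Because $M_0$ is a Levi subgroup, Remark \ref{rk_itsalevi} gives $Z(M_0)=Z(G)Z(M_0)^\circ$, so $Z(M_0)^\circ\sigma_2 = Z(M_0)^\circ z\sigma_1$ for some $z\in Z(G)$; thus $\tau_2 = (\tau_1)_z$ up to conjugacy and $S_2 = zS_1$. The main obstacle is precisely this last bookkeeping: identifying $M_0$, checking that $\phi$ carries its induction data to that of the associated Lie algebra sheet, and verifying that the $d_{M_0}$ distinct group sheets over a fixed $M_0$ (distinct components of $Z(M_0)$, which are non-conjugate by Lemma \ref{lem_cclevi}) are permuted transitively by $Z(G)$ — the surjectivity $Z(G)\twoheadrightarrow Z(M_0)/Z(M_0)^\circ$ being exactly where pseudo-Levi $=$ Levi in type $\mathsf A$ is used. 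The orbit-level identification $\C[\orb_{su}]\simeq_G\C[\Orb_{\xi+\nu}]$ itself is routine once $\phi$ is fixed.
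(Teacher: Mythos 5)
Your proposal is correct and takes essentially the same route as the paper's own proof: the paper likewise attaches to each class $\orb_x$ an element $\xi \in \lieg$ with $C_G(\xi) = C_G(x)$ (which you realise explicitly via a Springer isomorphism), transfers the module question to $\lieg$ where Theorem~\ref{thm_sln} applies, and matches the group sheet with datum $(L, Z(L)^\circ z)$ to the Lie algebra sheet of $\Lie(L)$, with translation by $Z(G)$ accounting for the finitely many components of $Z(L)$. The only difference is one of detail: you supply arguments (the construction of $\xi+\nu$, the compatibility of Lusztig--Spaltenstein and Lie-algebra induction, and the transitivity of $Z(G)$ on the non-conjugate components of $Z(M_0)$ via $Z(M_0)=Z(G)Z(M_0)^\circ$) that the paper compresses into assertions and the citation of \cite[Corollary 5.4]{AF}.
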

\begin{proof}
Recall that (birational) sheets in $G$ are disjoint and  are parameterised by $G$-classes of pairs $(L, Z(L)^\circ z)$, with $L$ a Levi sugbroup of $G$ and certain $z$ in $Z(G)$, see  \cite[Corollary 5.4]{AF}.
For every $x\in G$ there exists ${\xi} \in \lieg$ such that $C_G(\xi)=C_G(x)$: if the sheet of $G$ containing $\orb_x$ corresponds to the $G$-class of $(L, Z(L)^\circ z)$, then the sheet of $\lieg$ containing $\xi$ corresponds to the $G$-class of $\Lie(L)$.
Let  $x_i \in \orb_i$ and ${\xi_i} \in \lieg$ such that $C_G(\xi_i)=C_G(x_i)$, and let $(L_i, Z(L_i)^\circ z_i)$ correspond to $S_i$ for $i=1, 2$.
Then $\C[\orb_i]=\C[\Orb_{\xi_i}]$ for $i=1, 2$.
Therefore $\C[\orb_1] \simeq_G \C[\orb_2]$ if and only if and only if $L_1$ and $L_2$ are $G$-conjugate by Theorem \ref{thm_sln}, hence if and only if  $S_2= zS_1$ for some $z \in Z(G)$.
\end{proof}

\section{Spherical birational sheets}\label{ZB}
This section is dedicated to the main result of the paper: the classification of spherical conjugacy classes grouped in birational sheets and the verification of the analogues of Proposition \ref{prop_losevconj} and Conjecture \ref{conj} in the case of $G$ connected reductive with $G'$ simply-connected.

The property of being spherical is preserved along sheets, as proven in \cite[Proposition 1]{Arzhantsev_1997}. A \emph{spherical sheet} is a sheet consisting of spherical orbits, as in \cite{CarnoBul}.
Since every birational sheet is irreducible, it is contained in a sheet, and the following definition is well-posed.

\begin{defn} Let $\tau \in \mathscr{BB}(G)$.
We say that the birational sheet
$\overline{J(\tau)}^{bir}$ is \emph{spherical} if one of the following equivalent properties is satisfied:
\begin{enumerate}
\item[(i)] all conjugacy classes $\orb \subset \overline{J(\tau)}^{bir}$ are spherical;
\item[(ii)] there exists a spherical conjugacy class $\orb \subset \overline{J(\tau)}^{bir}$;
\item[(iii)] $\overline{J(\tau)}^{bir}$ is contained in a spherical sheet.
\end{enumerate}
\end{defn}

As recalled in the Introduction, if $\orb$ (resp. $S$) is a conjugacy class (resp. a sheet) of $G$ and we denote by $w_{\orb}$ (resp. $w_S$) the unique element of $W$ such that $\orb\cap B w_{\orb} B$ is dense in $\orb$ (resp. $S\cap Bw_S B$ is dense in $S$), then if $S$ is spherical,
 for every conjugacy class $\orb$ lying in $S$ we have $w_{\orb}=w_S$.
 For a birational sheet $\overline{J(\tau)}^{bir}$ we may define $w_\tau$ as the unique element of $W$ such that $\overline{J(\tau)}^{bir}\cap Bw_\tau B$ is dense in $\overline{J(\tau)}^{bir}$.
  It follows that for a spherical birational sheet $\overline{J(\tau)}^{bir}$, we have $w_{\tau}=w_{\orb}$ for every conjugacy class $\orb \subset \overline{J(\tau)}^{bir}$ and $w_{\tau}=w_S$ for every sheet $S$ containing $\overline{J(\tau)}^{bir}$.
 
We state our main result. 
\begin{thm} \label{thm_principal}
Let $G$ be a  complex connected reductive algebraic group with $G'$ simply-connected.
Then the spherical birational sheets form a partition of $G_{sph}$.
Let $\orb_1$ and $\orb_2$ be spherical conjugacy classes in $G$.
Let $\overline{J(\tau_1)}^{bir}$ (resp. $\overline{J(\tau_2)}^{bir}$) be the birational sheet containing $\orb_1$ (resp. $\orb_2$).
Then $\C[\orb_1]$ is isomorphic to $\C[\orb_2]$ as a $G$-module if and only if $\overline{J(\tau_2)}^{bir}= z\overline{J(\tau_1)}^{bir}$ for some $z \in Z(G)$.
\end{thm}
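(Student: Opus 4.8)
The plan is to reduce the statement to the case $G$ simple simply-connected and then read off both implications from the explicit classification of spherical birational sheets together with the weight monoids computed in \cite{Costantini2010}. First I would dispose of the partition claim: by \cite[Theorem 5.1]{AF} the birational sheets partition $G$, and since each is irreducible and contained in a sheet, sphericity---constant along sheets by \cite[Proposition 1]{Arzhantsev_1997}---is constant along each birational sheet. Hence $G_{sph}$ is exactly the union of the spherical birational sheets, and these partition it. For the reduction, write $G = Z(G)^\circ G'$ with $G' = G_1 \times \cdots \times G_k$ a product of simple simply-connected factors. As $Z(G)^\circ$ is central, $G$-conjugation acts only through $G'$, so a spherical class $\orb$ of $G$ is a $Z(G)^\circ$-translate of a product of spherical classes of the $G_i$; accordingly its weight monoid factors as a central character times the product of the factors' weight monoids, and the birational sheet through $\orb$ is the product of the birational sheets through the factors, shifted by $Z(G)^\circ$ via \eqref{eq_centre}. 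Thus the reductive statement follows once it is proved for each simple factor, the flexibility ``$z \in Z(G)$'' absorbing both the central torus and the interplay among the factors' centres.

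Now assume $G$ simple simply-connected. Since spherical classes are multiplicity-free, $\C[\orb]$ is determined as a $G$-module by $\Lambda(\orb)$, so it suffices to prove that $\Lambda(\orb_1) = \Lambda(\orb_2)$ if and only if $\overline{J(\tau_2)}^{bir} = z\overline{J(\tau_1)}^{bir}$ for some $z \in Z(G)$. For the ``if'' direction, $\overline{J(\tau_2)}^{bir} = z\overline{J(\tau_1)}^{bir}$ means $z^{-1}\orb_2$ lies in the same birational sheet as $\orb_1$; by Remark \ref{rk_uptocentre} one has $\Lambda(z^{-1}\orb_2) = \Lambda(\orb_2)$, so it remains to check that the weight monoid is constant along each spherical birational sheet. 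This is the group analogue of Proposition \ref{prop_losevconj}: within a single Jordan class it is Proposition \ref{prop_iso_jc_scgp}, and across the finitely many Jordan classes composing the sheet it is the expression of birationality of the induction map, which I would confirm directly from the tabulated weight monoids of \cite{Costantini2010} along the sheet (the associated element $w_\tau = w_\orb$ of $W$ serving as a convenient coarse invariant to organize the comparison).

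The substance of the theorem, and the step I expect to be the main obstacle, is the ``only if'' direction. Here I would invoke the classification of spherical birational sheets obtained in this section: each such sheet, up to translation by $Z(G)$, is described by a triple $(M, Z(M)^\circ s, \orb^M) \in \mathscr{BB}(G)$ with $\orb^M$ birationally rigid---hence characteristic in $M$ by \S\ref{sss_bruc}---so that the number of $G$-classes of such data for fixed $(M, \orb^M)$ is the index $d_M$ of Remark \ref{rk_characteristic}. Matching this list against the explicit weight monoids of \cite{Costantini2010}, I would verify that two spherical classes sharing a weight monoid necessarily arise from the same triple up to the central shift, i.e.\ lie in the same birational sheet modulo $Z(G)$.

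The difficulty is genuinely case-by-case: one must confront the full list of weight monoids across all types and show that the assignment sending a spherical birational sheet modulo $Z(G)$ to its common weight monoid is injective. I anticipate the delicate points to be the isolated classes singled out in Remark \ref{rk_lonely}, where the sheet reduces to a single class, and the pairs of sheets distinguished only by the component $Z(M)^\circ s$ rather than by the pseudo-Levi $M$ itself---precisely the situations counted by $d_M$---since there the weight monoids, read off from \cite{Costantini2010}, must be shown to differ by more than a shift realizable within $Z(G)$.
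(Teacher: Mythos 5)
Your overall architecture coincides with the paper's: the partition claim via \cite[Theorem 5.1]{AF} together with constancy of sphericity along sheets \cite[Proposition 1]{Arzhantsev_1997}; reduction to $G$ simple simply-connected; multiplicity-freeness reducing the $G$-module $\C[\orb]$ to the monoid $\Lambda(\orb)$; the ``if'' direction by checking from the tables of \cite{Costantini2010} that $\Lambda(\orb)$ is constant on each $Z(G)\overline{J(\tau)}^{bir}$; and the ``only if'' direction by a type-by-type verification that the assignment sending a spherical birational sheet modulo $Z(G)$ to its weight monoid is injective. This is exactly how the paper proceeds through \S\ref{ZB}.

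There is, however, one concrete flaw that would derail the execution. Your claim that $\orb^M$ birationally rigid is ``hence characteristic in $M$ by \S\ref{sss_bruc}'' is false: the remark in \S\ref{sss_bruc} concerns \emph{simple} groups, whereas pseudo-Levi subgroups $M$ are in general only semisimple, and the paper exhibits the failure explicitly in Remark \ref{rk_notchar}: for $G$ of type $\mathsf{C}_{2p}$ and $M_p$ of type $\mathsf{C}_p\mathsf{C}_p$, the birationally rigid class $[2,1^{2p-2}]\times\{1\}$ is not characteristic in $M_p$ (the automorphism swapping the two factors exchanges it with $\{1\}\times[2,1^{2p-2}]$). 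This has two consequences. First, Remark \ref{rk_characteristic} cannot be invoked uniformly to count the birational sheets in $Z(G)\overline{J(\tau)}^{bir}$ --- the paper flags precisely this exception when introducing the fourth column of its tables. Second, and more seriously, your injectivity check fails as stated: in type $\mathsf{C}_{2p}$ the rows for $\orb_{\sigma_p x_{\alpha_n}(1)}$ and $\orb_{\sigma_p x_{\beta_1}(1)}$ have \emph{identical} weight monoids, and under your characteristic-ness assumption no $G$-conjugation could carry $(M_p,\{z\sigma_p\},\orb^{M_p}_{x_{\beta_1}(1)})$ to $(M_p,\{\sigma_p\},\orb^{M_p}_{x_{\alpha_n}(1)})$ for any $z\in Z(G)$, leaving you with two sheets distinct modulo $Z(G)$ sharing a monoid --- an apparent counterexample to the very theorem. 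The paper's resolution uses the non-characteristic-ness: $\sigma_p$ and $\hat z\sigma_p$ are $G$-conjugate, and so are $\sigma_p x_{\alpha_n}(1)$ and $\hat z\sigma_p x_{\beta_1}(1)$, whence $\overline{J(\tau_1)}^{bir}=\hat z\,\overline{J(\tau_2)}^{bir}$. Relatedly, your closing formulation --- that in delicate cases the monoids ``must be shown to differ by more than a shift realizable within $Z(G)$'' --- misreads the mechanism: by Remark \ref{rk_uptocentre} central translation leaves $\Lambda(\orb)$ unchanged, so when two monoids coincide the task is to prove the sheets themselves are central translates of one another, not to compare shifted monoids.
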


Since the birational sheets form a partition of $G$, the spherical birational sheets form a partition of $G_{sph}$. 
The remainder of this section is devoted to the proof of Theorem \ref{thm_principal}: it is enough to assume $G$ simple.

If $z \in Z(G)$, then $\orb_z = \{z\}$, $w_{z}= 1$ and $\C[\orb_z] = \C$: then $\{z\}$ is the unique sheet and the unique birational sheet containing $z$.
We shall therefore only deal with non-central spherical conjugacy classes.

As recalled in the introduction, for a spherical conjugacy class $\orb$ the $G$-module structure of $\C[\orb]$ is completely determined by the weight lattice $\Lambda(\orb)$.

For $k = 1, \dots, n$, we put
\begin{align}
 \sigma_k &:= \exp \left(\dfrac{2\pi i}{c_k}\check{\omega}_k\right); \label{eq_sigma}\\
 \Theta_k &:= \Delta \setminus \{\alpha_k \} & L_k &:= L_{\Theta_k};  \label{eq_levi} \\
 \widetilde{\Theta}_k &:= \widetilde{\Delta} \setminus \{\alpha_k \} &M_k &:= L_{\widetilde{\Theta}_k} = C_G(\sigma_k). \label{eq_pseudo}
\end{align} 
We start from the list of proper spherical pseudo-Levi subgroups $H$ of $G$ arising as centralizers of semisimple elements.
We have the following possibilities:
\begin{enumerate}
\item[(i)] $H$ is a Levi subgroup of the form $L_i$ as in \eqref{eq_levi} for some  $i \in \{1,\dots,n\}$, with either $c_i=1$, so that $H$ is a maximal connected reductive subgroup of $G$, or $c_i=2$, so that  $H<H_1<G$, where $H_1$ is the spherical pseudo-Levi subgroup of $G$ corresponding to $\widetilde{\Theta}_i$ as in \eqref{eq_pseudo};
\item[(ii)] $H$ is not a Levi subgroup: then $H$ is of the the form $M_i$ as in \eqref{eq_pseudo}, with $c_i \in \{2, 3\}$, so that 
$H$ is a maximal connected reductive subgroup of $G$.
\end{enumerate}

For each such $H=C_G(s)$ with $s$ semisimple we consider certain birationally rigid unipotent conjugacy classes $\orb^H$ in $H$ and describe the birational sheet corresponding to $\tau=(H, Z^\circ s,\orb^H)$ by checking whether each class in $\overline{J(\tau)}^{reg} \setminus J(\tau)$ is birationally induced, using Lemmas \ref{lem_centrinpar} and \ref{lem_compgp}.
Eventually we are left, up to central elements, with spherical birationally rigid unipotent conjugacy classes in $G$.

For each type we collect the results in a table.
In the first column there is a certain $\tau=(M, Z(M)^\circ s, \orb_u^M) \in \mathscr{BB}(G)$ with $M=C_G(s)$ and $\orb_{su}$ spherical in $G$.
In the second column we describe $\overline{J(\tau)}^{bir}$. 
From the tables in \cite{Costantini2010} we verify that the weight monoid is constant on the orbits in $\overline{J(\tau)}^{bir}$ and we describe it in the third column.
In the cases when $Z(G)$ is non-trivial, we list also a fourth column indicating the number of (disjoint) birational sheets
in $Z(G)\overline{J(\tau)}^{bir}$.
This is produced by applying Remark \ref{rk_characteristic} in all cases, except for one case where $G$ is of type ${\sf C}_{2p}$ and $M$ is of type ${\sf C}_{p}{\sf C}_{p}$, see Remark \ref{rk_notchar}).

The fact that $\Lambda(\orb)$ is independent of the orbit $\orb$ in $\overline{J(\tau)}^{bir}$ (and hence in $Z(G)\overline{J(\tau)}^{bir}$) proves the group analogue of
Proposition \ref{prop_losevconj} (for spherical conjugacy classes in $G$ simple simply-connected). To prove the validity of the group analogue of
Conjecture \ref{conj} one has to check that the entries in the third column are pairwise distinct. 

\bigskip

We shall freely use the notation from \cite{Costantini2010}.
For $K \leq G$, $K$ simple, we will consider the isogeny $\pi_K\colon K\to \overline K:=K_{ad}$ to the adjoint group, omitting subscripts when $K=G$.

\subsection{Type $\mathsf{A}_n$, $n \geq 1$}
Here $G = \mathrm{SL}_{n+1}(\C)$, for $n \geq 1$.

\begin{lem} \label{lem_sln_levi_str}
Let $\mathbf{d}= [d_1, \dots, d_r]  \vdash n+1$ and let $L_{\mathbf{d}}$ be the standard Levi subgroup of $G$ indexed by $\mathbf{d}$. Then  $Z(L_{\mathbf{d}})$ has exactly $\gcd\{d_i \mid d_i \in \mathbf{d}\}$ connected components, pairwise not conjugate in $G$.
 \end{lem}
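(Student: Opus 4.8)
The plan is to describe $Z(L_{\mathbf{d}})$ explicitly, realise it as the kernel of a single character of an $r$-dimensional torus, compute its component group by an elementary lattice argument, and then invoke Lemma \ref{lem_cclevi} to deduce the pairwise non-conjugacy.

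First I would identify $L_{\mathbf{d}}$ with the block-diagonal subgroup $S(\GL_{d_1}(\C) \times \cdots \times \GL_{d_r}(\C))$ of $G = \SL_{n+1}(\C)$. Its centre consists precisely of the block-scalar matrices $\mathrm{diag}(z_1 I_{d_1}, \ldots, z_r I_{d_r})$ with $z_i \in \C^\times$, subject to the determinant-one condition $\prod_{i=1}^r z_i^{d_i} = 1$. Hence $Z(L_{\mathbf{d}})$ is exactly the kernel of the character
$$
\chi \colon T := (\C^\times)^r \longrightarrow \C^\times, \qquad (z_1, \ldots, z_r) \longmapsto \prod_{i=1}^r z_i^{d_i}.
$$

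Next I would compute the number of connected components of $\ker\chi$. Since $\ker\chi$ is a diagonalizable group, the order of its group $\pi_0$ of connected components equals the order of the torsion subgroup of its character lattice $X^*(\ker\chi) = X^*(T)/\Z\chi = \Z^r/\Z(d_1, \ldots, d_r)$. Setting $d := \gcd\{d_i\}$ and writing $(d_1, \ldots, d_r) = d\,(e_1, \ldots, e_r)$ with $\gcd\{e_i\} = 1$, the vector $(e_1, \ldots, e_r)$ is primitive and so extends to a $\Z$-basis of $\Z^r$; with respect to such a basis the quotient is isomorphic to $(\Z/d\Z) \oplus \Z^{r-1}$, whose torsion subgroup has order $d$. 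Therefore $Z(L_{\mathbf{d}})$ has exactly $\gcd\{d_i\}$ connected components.

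Finally, the pairwise non-conjugacy is immediate: $L_{\mathbf{d}}$ is a Levi subgroup of $G$, so by Lemma \ref{lem_cclevi} two connected components of $Z(L_{\mathbf{d}})$ are $G$-conjugate if and only if they are equal; being distinct, they are pairwise non-conjugate. The only step requiring genuine care is the passage from $\ker\chi$ to its component group, which I would justify through the standard duality between diagonalizable groups and finitely generated abelian groups, under which $\pi_0$ corresponds to the torsion of the character lattice; this reduces the count to the elementary fact that a primitive integer vector extends to a lattice basis.
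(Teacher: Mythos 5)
Your proposal is correct and follows essentially the same route as the paper: identify $Z(L_{\mathbf{d}})$ with $\{(z_1,\dots,z_r)\in(\C^\times)^r \mid z_1^{d_1}\cdots z_r^{d_r}=1\}$, observe that its component group is cyclic of order $d=\gcd\{d_i\}$, and conclude non-conjugacy from Lemma \ref{lem_cclevi}. The only difference is that you spell out the component count via the torsion of the character lattice $\Z^r/\Z(d_1,\dots,d_r)$, a routine justification the paper leaves implicit.
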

\begin{proof}
 We have
$Z(L)  \simeq S := \{(z_1, \dots, z_r) \in (\C^\times)^r \mid z_1^{d_1} \cdots z_r^{d_r} = 1\}$.  If $d =\gcd\{d_i \mid i=1, \dots, r\}$, we have $Z(L)/Z(L)^\circ \simeq S/S^\circ \simeq  \Z / d \Z$.
The last assertion follows from Lemma \ref{lem_cclevi}.
\end{proof}

For completeness we prove Theorem  \ref{thm_principal} for type $\mathsf{A}_n$, but it is a consequence of Theorem \ref{thm_SLn}.

\begin{prop}
Theorem \ref{thm_principal} holds for $G$ of type $\mathsf{A}_n$, $n\geq 1$.	
\end{prop}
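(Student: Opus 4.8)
The plan is to leverage the work already done for $\SL_n(\C)$ in Theorem~\ref{thm_SLn}, since type $\mathsf{A}_n$ means $G = \SL_{n+1}(\C)$. The key structural simplification in type $\mathsf{A}$ is Remark~\ref{rk_typea}: every pseudo-Levi subgroup of $\SL_{n+1}(\C)$ is in fact a Levi subgroup, and a unipotent class is (birationally) rigid if and only if it is trivial. Consequently, $G_{sph}$ decomposes into birational sheets each of which is determined by a pair $(L, Z(L)^\circ s)$ with $L$ a Levi subgroup, and the birationally rigid class $\orb^L$ is forced to be $\{1\}$. So the spherical birational sheets among these are precisely those whose generic semisimple orbit is spherical, and the first assertion (that spherical birational sheets partition $G_{sph}$) is immediate from the fact (stated before the theorem) that birational sheets partition $G$ and the property of being spherical is preserved along birational sheets.

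For the equivalence of the $G$-module isomorphism $\C[\orb_1] \simeq_G \C[\orb_2]$ with the condition $\overline{J(\tau_2)}^{bir} = z\,\overline{J(\tau_1)}^{bir}$ for some $z \in Z(G)$, I would argue as follows. First I would reduce, via Theorem~\ref{thm_SLn}, to a statement about Levi subgroups: that theorem already asserts that for $G = \SL_n(\C)$ the conditions $\C[\orb_1] \simeq_G \C[\orb_2]$, $\Lambda(\orb_1) = \Lambda(\orb_2)$, and $S_2 = zS_1$ (equality of the ordinary sheets up to a central shift) are all equivalent. Since in type $\mathsf{A}$ sheets coincide with birational sheets (Remark~\ref{rk_typea} again: rigid $=$ birationally rigid, so the two partitions agree), the condition $S_2 = zS_1$ is literally the condition $\overline{J(\tau_2)}^{bir} = z\,\overline{J(\tau_1)}^{bir}$. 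Thus the proposition follows by directly quoting Theorem~\ref{thm_SLn} and translating its conclusion into the birational-sheet language.

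The only point requiring genuine care is the bookkeeping of central translations. In $\SL_{n+1}(\C)$ the centre is $Z(G) \simeq \Z/(n+1)\Z$, and Lemma~\ref{lem_sln_levi_str} records that $Z(L_{\mathbf{d}})$ has exactly $\gcd\{d_i\}$ connected components, pairwise non-conjugate in $G$. I would use this to confirm that the different components $Z(L)^\circ s$ giving rise to genuinely distinct (non-$G$-conjugate) birational sheets for a fixed Levi $L$ are correctly counted, and that translating by $z \in Z(G)$ moves one such sheet to another according to the action of $Z(G)$ on $\pi_0(Z(L))$; this is the content that makes "$S_2 = zS_1$ for some $z$" the precise characterization rather than mere $G$-conjugacy of the Levi subgroups.

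The main (and essentially only) obstacle is ensuring that no spherical birational sheet in type $\mathsf{A}_n$ escapes the framework of Theorem~\ref{thm_SLn}: concretely, one must verify that every spherical conjugacy class in $\SL_{n+1}(\C)$ lies in a birational sheet whose datum $\tau = (M, Z(M)^\circ s, \orb^M)$ has $M$ a Levi subgroup and $\orb^M = \{1\}$, so that the class is, up to a central shift, of the form $\orb_{su}$ with $u = 1$ and $s$ semisimple. This is exactly what Remark~\ref{rk_typea} guarantees, since the only birationally rigid unipotent class is trivial; hence the spherical birational sheets consist (up to centre) of semisimple classes, and the proof reduces with no residual work to the already-established Theorem~\ref{thm_SLn}.
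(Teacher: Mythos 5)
Your main reduction is sound, and the paper itself endorses it: immediately before its own proof it notes that the type-$\mathsf{A}$ case ``is a consequence of Theorem \ref{thm_SLn}'' and that the direct argument is included only for completeness. Your route --- Theorem \ref{thm_SLn} gives the three-way equivalence for \emph{all} conjugacy classes of $\SL_{n+1}(\C)$ (the theorem's $n$ being your $n+1$), in type $\mathsf{A}$ sheets and birational sheets coincide and are disjoint (Remark \ref{rk_typea}, via \cite[Corollary 5.4]{AF}), so the condition $S_2 = zS_1$ translates verbatim into $\overline{J(\tau_2)}^{bir} = z\,\overline{J(\tau_1)}^{bir}$ --- is correct, as is your observation that the partition statement is automatic because birational sheets partition $G$ and sphericality is constant along them. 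The paper instead carries out the explicit classification underlying Table \ref{tab_an}: it handles $n=1$ separately, describes $\overline{J(\tau_i)}^{bir}$ for $\tau_i = (L_i, Z(L_i)^\circ, \{1\})$ using \cite[Theorem 7.2.3]{CollMcGov} and the component count of Lemma \ref{lem_sln_levi_str}, identifies the unipotent classes $\orb_{\mathbf{d}_i^t} = X_i$, and then checks from the tables of \cite{Costantini2010} both that the weight monoid is constant on each $Z(G)\overline{J(\tau)}^{bir}$ and that distinct sheets have distinct monoids. The paper's version buys the explicit table, uniform with the other types; yours buys brevity, resting instead on the small-module/Kostka argument behind Theorem \ref{thm_sln}. (A minor misattribution: the fact that pseudo-Levi subgroups of $\SL_{n+1}(\C)$ are Levi subgroups is stated just before Theorem \ref{thm_SLn}, not in Remark \ref{rk_typea}.)

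Your closing paragraph, however, asserts something false: that since every birational sheet has datum $(L, Z(L)^\circ s, \{1\})$, each class in it is ``up to a central shift, of the form $\orb_{su}$ with $u=1$'', i.e.\ that spherical birational sheets consist, up to centre, of semisimple classes. This conflates the Jordan class $J(\tau)$, whose classes are indeed $\orb_{s_1}$ with $s_1 \in (Z(L)^\circ s)^{reg}$, with its birational closure, which also contains the degenerations $G \cdot \bigl(z \ind_L^{C_G(z)}\{1\}\bigr)$ at non-regular $z$, and these have nontrivial unipotent part: $\overline{J(\tau_\ell)}^{bir}$ contains $zX_\ell = z\,\orb_{[2^\ell,1^{n+1-2\ell}]}$ for $z \in Z(G)\cap Z(L_\ell)^\circ$, and $X_\ell$ is a spherical class that is not a central translate of any semisimple class (already in $\SL_2(\C)$ the regular sheet contains $\pm$ the regular unipotent class). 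Fortunately the error sits in a step you do not need: Theorem \ref{thm_SLn} carries no semisimplicity or sphericality hypothesis, so there is no ``framework'' into which spherical classes must be fitted --- the reduction in your second paragraph is already complete. Delete the final paragraph and the proof stands.
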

\begin{proof}
Recall that every sheet in $G$ is a birational sheet, see Remark \ref{rk_typea}.

If $n = 1$, every conjugacy class of $G$ is spherical and there are three (birational) sheets: $\{-1\}, \{1\}$ and $G^{reg}$. 

Let $n \geq 2$. 
Consider the Levi subgroups $L_i$, for all $i = 1, \dots, \left\lfloor \frac{n + 1}{2} \right\rfloor$.
Then $L_i' \simeq  {\rm SL}_{n+1-i} \times{\rm SL}_{i} $, the centre $Z(L_i)$ is one-dimensional and consists of $d = \gcd(n+1-i, i)=\gcd(i,n+1)$ distinct connected components which are not conjugate in $G$.
Let $\mathbf{d}_i = [n+1-i, i]$ and let $\tau_i :=  (L_i, Z(L_i)^\circ, \{1\})$,
then $Z(L_i)^\circ = \exp( \C \check{\omega}_i)$,  $Z(G) \cap Z(L_i)^\circ$ has order $\frac{n+1}d$ and
\begin{align*}
\overline{J(\tau_i)}^{bir} = \overline{J(\tau_i)}^{reg}&= \bigcup_{z \in Z(L_i)^\circ} G \cdot ( z\ind_{L_i}^{C_G(z)} \{1\}) = \bigcup_{\zeta \in \C \setminus 2 \pi i \Z} \orb_{\exp(\zeta \check{\omega}_i)} \sqcup \bigsqcup_{z \in Z(G) \cap Z(L_i)^\circ } z \orb_{\mathbf{d}_i^t},
\end{align*}  by \cite[Theorem 7.2.3]{CollMcGov}. Moreover the unipotent class $\orb_{\mathbf{d}_i^t}$ is the class denoted by $X_{i}$ in \cite[\S 4.1]{Costantini2010}.
From \cite[Table 1, 2]{Costantini2010}
the weight monoid is preserved along the classes in $Z(G)S$ for any spherical (birational) sheet $S$. The entries in the third column of Table \ref{tab_an} are pairwise distinct.
\end{proof}

\begin{table}[H] 
\centering
{\renewcommand{\arraystretch}{1.5}
\begin{tabular}{|c|c|c|c|}
\hline
$\tau$                       & $\overline{J(\tau)}^{bir}$   &               $\Lambda(\orb)$ & $d$ \\ \hline
\makecell{$(L_\ell, Z(L_\ell)^\circ, \{1\})$\\ $\ell=1\ldots, m-1$}      & 
\makecell{$\bigcup\limits_{\zeta \in \C \setminus 2 \pi i \Z} 
\orb_{\exp(\zeta \check{\omega}_\ell)} \sqcup$ \\
$ \sqcup (Z(G) \cap Z(L_i)^\circ)X_\ell$}
  & $\sum\limits_{k=1}^\ell n_k (\omega_k+\omega_{n-k+1})$&$\gcd(\ell,n+1)$ \\ \hline
\makecell{$ (L_m, Z(L_m), \{1\})$\\ $n=2m$} &
$\bigcup\limits_{\zeta \in \C \setminus 2 \pi i \Z} 
\orb_{\exp(\zeta \check{\omega}_m)}
 \sqcup Z(G)X_m$
& $\sum\limits_{k=1}^m n_k (\omega_k+\omega_{n-k+1})$ &$1$\\ \hline
\makecell{$(L_m, Z(L_m)^\circ, \{1\})$\\$n+1=2m$} &
$\bigcup\limits_{\zeta \in \C \setminus 2 \pi i \Z} 
\orb_{\exp(\zeta \check{\omega}_m)}
 \sqcup \pm X_m$
& $\sum\limits_{k=1}^{m-1} n_k (\omega_k+\omega_{n-k+1})+2n_m\omega_m$ &$m$\\ \hline
  \end{tabular}}
    \caption{Type $\mathsf{A}_n, n \geq 1, m=\left\lfloor \frac{n+1}{2} \right\rfloor$.}
    \label{tab_an}
\end{table}

\subsection{Type $\mathsf{C}_n, n \geq 2$}

Here $G = \mathrm{Sp}_{2n}(\C)$, for $n \geq 2$. 
We have $\beta=2\alpha_1+2\alpha_2+\cdots+2\alpha_{n-1}+\alpha_n$, and $Z(G)=\langle\hat z\rangle$ with $\hat z =\prod_{i=1}^{\left\lfloor \frac {n+1}2 \right\rfloor}\alpha^\vee_{2i-1}(-1)$.
We set $p := \left\lfloor \frac{n}{2} \right\rfloor$.
For $k = 1, \dots, p$, 
the pseudo-Levi subgroup $M_k$ is of type ${\sf C}_{n-k}{\sf C}_{k}$.

\subsubsection{Type $\mathsf{C}_2$}
\begin{lem}
Let $G = {\rm Sp}_4(\C)$.
Let $S_i := \overline{J_G(\tau_i)}^{reg}$ with $\tau_i := (L_i, Z(L_i)^\circ, \{1\})$ for $i=1,2$.
Then $S_2 = \overline{J(\tau_2)}^{bir}$ is a birational sheet and $S_1 = \overline{J(\tau_1)}^{bir}\sqcup \orb_{[2^2]}$.
\end{lem}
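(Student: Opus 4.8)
The plan is to describe each sheet $S_i=\overline{J(\tau_i)}^{reg}$ explicitly as a one-parameter family and then decide, point by point along $Z(L_i)^\circ$, whether the induction $\ind_{L_i}^{C_G(z)}\{1\}$ is birational, using the criterion of Lemma \ref{lem_centrinpar}. Concretely, I would realise $G=\Sp_4(\C)$ with $T$ the diagonal torus and Bourbaki roots $\alpha_1=e_1-e_2$ (short), $\alpha_2=2e_2$ (long), so that $L_1=\langle T,U_{\pm\alpha_2}\rangle\cong\GL_1\times\Sp_2$ acts with the $\Sp_2$-factor on the second hyperbolic plane, while $L_2=\langle T,U_{\pm\alpha_1}\rangle\cong\GL_2$ is the Siegel Levi. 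In both cases $Z(L_i)^\circ\cong\C^\times$, and I would first locate the points where the centraliser jumps: along $Z(L_2)^\circ=\{\mathrm{diag}(a,a,a^{-1},a^{-1})\}$ the only jumps occur at the central elements $z=\pm I$ ($a=\pm1$), each giving $C_G(z)=G$ and $\ind_{L_2}^G\{1\}=\orb_{[2^2]}$; along $Z(L_1)^\circ=\{\mathrm{diag}(a,1,1,a^{-1})\}$ the jumps occur at $z=1$ (with $C_G(1)=G$, again $\ind_{L_1}^G\{1\}=\orb_{[2^2]}$) and at $z=\sigma_1=\exp(\pi i\check{\omega}_1)=\mathrm{diag}(-1,1,1,-1)$ (with $C_G(\sigma_1)=M_1$ of type $\mathsf{A}_1\mathsf{A}_1$), while for all other $z$ one has $C_G(z)=L_i$ and the class is semisimple. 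This yields the decomposition of $S_1$ and $S_2$ into a generic family of semisimple classes together with finitely many special fibres.

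The heart of the proof is the birationality analysis of the generalised Springer map for the Richardson class $\orb_{[2^2]}$ relative to the two parabolics, which I would carry out via Lemma \ref{lem_centrinpar}(iii). For the Siegel parabolic $P_2=L_2U$ I would take $u\in\orb_{[2^2]}\cap\overline{\{1\}}U$ with $u=I+N$, $N^2=0$ of rank $2$; then $\image N=\ker N$ is the unique Lagrangian, and since every element of $C_G(u)$ preserves it, one gets $C_G(u)=C_{P_2}(u)$, so $\gamma$ is birational and $\orb_{[2^2]}$ is birationally induced from $(L_2,\{1\})$. For the parabolic $P_1=L_1U$ I would instead show $C_{P_1}(u)\subsetneq C_G(u)$: the component group $C_G(u)/C_G(u)^\circ\cong\Z/2\Z$ of $\orb_{[2^2]}$ is not met inside $P_1$ (equivalently, the Springer map has degree $2$), so $\orb_{[2^2]}$ is \emph{not} birationally induced from $(L_1,\{1\})$. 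I would stress that Lemma \ref{lem_compgp} cannot shortcut this step, because for $\overline G=\PSp_4\cong\SO_5$ the class $[2^2]$ corresponds to $[3,1^2]$ in $\mathfrak{so}_5$, whose centraliser in $\overline G$ is disconnected; hence the direct centraliser comparison is unavoidable. I expect precisely this comparison — the fact that the order-two component group of $\orb_{[2^2]}$ is visible inside $P_1$ but disappears inside $P_2$ — to be the main obstacle.

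Finally I would treat the remaining special fibre of $S_1$: at $z=\sigma_1$ the induction $\ind_{L_1}^{M_1}\{1\}$ is the regular unipotent class in the second $\mathsf{A}_1$-factor of $M_1\cong\SL_2\times\SL_2$, for which the generalised Springer map is the $\SL_2$ Springer resolution and hence birational, so $\sigma_1\in\bir(Z(L_1)^\circ,\{1\})$. Combining these computations with the definition of $\bir(Z(L_i)^\circ,\{1\})$, I conclude that $\bir(Z(L_2)^\circ,\{1\})=Z(L_2)^\circ$, whence $\overline{J(\tau_2)}^{bir}=\overline{J(\tau_2)}^{reg}=S_2$ is a birational sheet, while $\bir(Z(L_1)^\circ,\{1\})=Z(L_1)^\circ\setminus\{1\}$, so that $S_1=\overline{J(\tau_1)}^{reg}$ is recovered from $\overline{J(\tau_1)}^{bir}$ by adjoining exactly the fibre over $z=1$, namely $\orb_{[2^2]}$, giving $S_1=\overline{J(\tau_1)}^{bir}\sqcup\orb_{[2^2]}$.
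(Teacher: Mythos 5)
Your proposal is correct and follows essentially the same route as the paper: the same decomposition of each $S_i$ over $Z(L_i)^\circ$ into generic semisimple classes plus the fibres at $z=1$ and $z=\sigma_1$, the same use of Lemma \ref{lem_centrinpar} via a representative $u\in\orb_{[2^2]}\cap U_{P_2}$ whose centraliser preserves the Lagrangian $\image(u-1)=\ker(u-1)$ (the paper takes $u=x_{\beta_1}(1)x_{\beta_2}(1)$ with $C_G(u)\leq P_{\Theta_2}$), and the same type-$\mathsf{A}$ argument at $\sigma_1$ (Remark \ref{rk_typea} in the paper, your $\SL_2$ Springer resolution). Your degree-$2$ component-group computation for the $P_1$-induction is correct and is in fact a useful supplement, since the paper asserts without detail that $\orb_{[2^2]}$ is not birationally induced from $(L_1,\{1\})$; one can confirm your claim by writing $V=W\otimes U$ with $u-1=n\otimes 1$, noting that the non-identity component of $C_G(u)$ acts on $\ker(u-1)\cong U$ by swapping the two isotropic lines of $U$, one of which is spanned by the vector defining $P_1$.
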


\begin{proof}
Observe that $L_2$ is maximal and $Z(L_2)$
is connected.
Then 
$$S_2 = \bigcup_{z \in Z(L_2)} G \cdot ( z\ind_{L_2}^{C_G(z)} \{1\}) = G\cdot((Z(L_2))^{reg}) \sqcup \ind_{L_2}^{G} \{1\} \sqcup \hat{z} \ind_{L_2}^{G} \{1\}.$$
We have $\ind_{L_2}^G \{1\} = \orb_{[2^2]}$,
 and $u = x_{\beta_1}(1)x_{\beta_2}(1)\in \orb_{[2^2]}=X_2$ 
 satisfies $C_G(u) \leq P_{\Theta_2}$,
  so that $\orb_{[2^2]}$ is birationally induced from $(L_2, \{1\})$ by  Lemma \ref{lem_centrinpar} and $S_2$ is a birational sheet. 

For $S_1$, observe that $L_1<M_1<G$, where  $Z(L_1) = Z(L_1)^\circ \sqcup \hat{z} Z(L_1)^\circ$.
We have $$S_1 = \bigcup_{z \in Z(L_1)^\circ} 
G \cdot ( z \ind_{L_1}^{{C_G(z)}} \{1\}) = 
G \cdot ((Z(L_1)^\circ)^{reg}) \sqcup G \cdot (\sigma_1 \ind_{L_1}^{C_1} \{1\}) \sqcup \ind_{L_1}^G \{1\}.$$
Observe that $M_1$ is of type ${\sf A}_1{\sf A}_1$, so the class $\ind_{L_1}^{M_1} \{1\}= \orb^{M_1}_{x_\beta(1)}$ is birationally induced by Remark \ref{rk_typea}.
The subregular unipotent class $\orb_{[2^2]} = \ind_{L_1}^G \{1\}$ is not birationally induced from $(L_1, \{1\})$, so that 
$\overline{J(\tau_1)}^{bir} =\bigcup_{\zeta \in \C \setminus \pi i \Z}  \orb_{\exp(\zeta \check{\omega}_1)}  \sqcup  \orb_{\sigma_1 x_{\beta}(1)}$ and
$Z(G)\overline{J(\tau_1)}^{bir} =\overline{J(\tau_1)}^{bir} \sqcup \hat z \overline{J(\tau_1)}^{bir}
$.
\end{proof}

There is only one more spherical pseudo-Levi subgroup $M_1$ giving rise to the (birational) sheet $\orb_{\sigma_1}$. Note that $\sigma_1$ and $\hat z\sigma_1$ are conjugate, hence $Z(G)\orb_{\sigma_1}=\orb_{\sigma_1}$.
  
Up to central elements, there is only one more spherical conjugacy classes $X_1$ corresponding to the partition $[2,1^2]$: this is a birationally rigid unipotent conjugacy class in $G$.

\begin{table}[H]
\centering
{\renewcommand{\arraystretch}{1.5}
\begin{tabular}{|c|c|c|c|}
\hline
$\tau$                       & $\overline{J(\tau)}^{bir}$                 & $\Lambda(\orb)$ &$d$ \\ \hline
$ (L_2, Z(L_2), \{1\})$      & 
$\bigcup\limits_{\zeta \in \C \setminus 2 \pi i \Z} 
\orb_{\exp(\zeta \check{\omega}_2)}
 \sqcup X_2\sqcup \hat{z} X_2$
  & $2 n_1\omega_1 + 2 n_2\omega_2$&$1$ \\ \hline
$ (L_1, Z(L_1)^\circ, \{1\})$ & $\bigcup\limits_{\zeta \in \C \setminus \pi i \Z} \orb_{\exp(\zeta \check{\omega}_1)}  \sqcup  \orb_{\sigma_1 x_{\beta}(1)}$ &$2 n_1 \omega_1 + n_2 \omega_2$&$2$ \\ \hline
$ (M_1, \{\sigma_1\}, \{1\})$  & $\orb_{\sigma_1}$ & $n_2 \omega_2$   &$1$     \\ \hline
$(G, \{1\}, \orb_{[2,1^2]})$ & $X_1$ &  $2n_1\omega_1$ &  	$2$	\\ \hline
\end{tabular}}
\caption{Type $\mathsf{C}_2$.}\label{tab_c2}
\end{table}

\begin{prop}
Theorem \ref{thm_principal} holds for $G$ of type $\mathsf{C}_{2}$.
\end{prop}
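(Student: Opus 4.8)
The plan is to reduce the statement to two combinatorial checks that can be read off from Table \ref{tab_c2} and the two preceding lemmas. First I would separate out the structural facts independent of the type. Since birational sheets partition $G$ by \cite[Theorem 5.1]{AF}, and the spherical property is preserved along birational sheets, the spherical birational sheets are precisely those birational sheets meeting $G_{sph}$; being pairwise disjoint, they partition $G_{sph}$, which settles the first assertion. For the equivalence, I would use that a spherical conjugacy class $\orb$ has $\C[\orb]$ multiplicity-free, so that $\C[\orb]$ is determined as a $G$-module by $\Lambda(\orb)$; thus $\C[\orb_1]\simeq_G\C[\orb_2]$ if and only if $\Lambda(\orb_1)=\Lambda(\orb_2)$, and it suffices to match equality of weight monoids against coincidence of birational sheets up to $Z(G)$.

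Next I would confirm that the enumeration is complete. The $\mathsf{C}_2$ lemma handles the Levi subgroups $L_1$ and $L_2$; the pseudo-Levi $M_1$ contributes the sheet $\orb_{\sigma_1}$, and the only remaining spherical birationally rigid unipotent class is $X_1=\orb_{[2,1^2]}$. Together with the central singleton sheets $\{1\}$ and $\{\hat z\}$ (each with trivial weight monoid), these exhaust $G_{sph}$ and produce exactly the four nontrivial rows of Table \ref{tab_c2}.

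For the forward direction I would invoke the constancy of the weight monoid along each $\overline{J(\tau)}^{bir}$, as recorded in the third column of Table \ref{tab_c2} and verified from \cite{Costantini2010}; this is the group analogue of Proposition \ref{prop_losevconj} in the present case. Combined with Remark \ref{rk_uptocentre}, which gives $\Lambda(z\orb)=\Lambda(\orb)$ for $z\in Z(G)$, I obtain that $\overline{J(\tau_2)}^{bir}=z\overline{J(\tau_1)}^{bir}$ forces $\Lambda(\orb_1)=\Lambda(z^{-1}\orb_2)=\Lambda(\orb_2)$, hence $\C[\orb_1]\simeq_G\C[\orb_2]$.

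The converse is the only genuine check, and I expect it to be the main (if modest) obstacle: I must verify that the weight monoids in the third column are pairwise distinct, so that $\Lambda(\orb_1)=\Lambda(\orb_2)$ can occur only within a single $Z(G)$-orbit of birational sheets. Here the monoids are generated respectively by $\{2\omega_1,2\omega_2\}$, $\{2\omega_1,\omega_2\}$, $\{\omega_2\}$ and $\{2\omega_1\}$ (and by $\{0\}$ for the central classes), and a direct inspection of their generators separates them. Given two spherical classes with $\Lambda(\orb_1)=\Lambda(\orb_2)$, they must then arise from the same row, i.e. $\overline{J(\tau_2)}^{bir}=z\overline{J(\tau_1)}^{bir}$ for some $z\in Z(G)$, which completes the proof.
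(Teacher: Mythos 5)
Your proposal is correct and follows essentially the same route as the paper: the proof there consists precisely of citing \cite[Table 3, 4, 5]{Costantini2010} for the constancy of the weight monoid along each $Z(G)\overline{J(\tau)}^{bir}$ and observing that the third-column entries of Table \ref{tab_c2} (the monoids generated by $\{2\omega_1,2\omega_2\}$, $\{2\omega_1,\omega_2\}$, $\{\omega_2\}$, $\{2\omega_1\}$) are pairwise distinct. The structural reductions you spell out---the partition of $G_{sph}$, multiplicity-freeness reducing $\C[\orb]$ to $\Lambda(\orb)$, and Remark \ref{rk_uptocentre} for central shifts---are exactly the general preliminaries the paper establishes once in Section \ref{ZB} before the type-by-type verification, so nothing is missing.
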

\begin{proof}
From \cite[Table 3, 4, 5]{Costantini2010}
the weight monoid is preserved along the classes in each $Z(G)\overline{J(\tau)}^{bir}$. The entries in the third column of Table \ref{tab_c2} are pairwise distinct.
\end{proof}

\begin{rk}
The subregular unipotent class $\orb_{[2^2]}$ lies in both the sheets $S_1$ and $S_2$.
This agrees with what is stated in \cite[\S 6(c)]{BK79}: $\orb_{[2^2]}$ can be deformed in semisimple classes of both types $\orb_{\exp(\zeta \check{\omega}_1)}$ and $\orb_{\exp(\zeta \check{\omega}_2)}$, but in general the multiplicities of the weights can decrease. Indeed, for  $\zeta \in \C \setminus  \pi i \Z$, we have $\Lambda(\orb_{\exp(\zeta \check{\omega}_1)}) = \langle 2\omega_1, \omega_2 \rangle >  \langle 2\omega_1, 2\omega_2 \rangle = \Lambda(\orb_{[2^2]}) = \Lambda(\orb_{\exp(\zeta \check{\omega}_2)})$. 
\end{rk}

\begin{rk} 
The sheet $S_1$ is not a union of birational sheets.
\end{rk}

\subsubsection{Type $\mathsf{C}_n$, $n \geq 3$}
\begin{lem}
Let $n \geq 3$. Then:
\begin{enumerate}
\item[(i)] Let $\tau_1 = (L_1, Z(L_1)^\circ, \{1\})$; then the spherical sheet $S_1 := \overline{J(\tau_1)}^{reg}$ decomposes as the union of $\overline{J(\tau_1)}^{bir}$
and the unipotent birationally rigid class $\orb_{\mathbf{d}}$ with $\mathbf{d} = [2^2,1^{2(n-1)}]$;
similarly for the birational sheet $\hat{z} S_1$.
\item[(ii)] Let $\tau_n  = (L_n, Z(L_n), \{1\})$; then the spherical sheet $S_n := \overline{J(\tau_n)}^{reg}$ is a birational sheet containing the unipotent class $\orb_{\mathbf{f}}$, with $\mathbf{f} = [2^n]$.
\end{enumerate}
\end{lem}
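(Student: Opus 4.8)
The plan is to describe each sheet explicitly as a one-parameter family of semisimple classes degenerating along the walls of the one-dimensional torus $Z(L_i)^\circ$, and then to decide the birationality of each degeneration via the criterion of Lemma~\ref{lem_centrinpar}, exactly as was done for $\mathsf{C}_2$.

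For (i), I would first record the wall structure of $Z(L_1)^\circ=\exp(\C\check\omega_1)$. Writing $z=\exp(\zeta\check\omega_1)$ and using $\alpha_i(\check\omega_1)=\delta_{i1}$ together with $\beta(\check\omega_1)=c_1=2$, the only roots outside $L_1$ that can vanish on $z$ are $\pm\beta$, and this happens at $\zeta\in\pi i\Z$. Hence $C_G(z)=L_1$ generically, $C_G(\sigma_1)=M_1$ and $C_G(1)=G$, so that
$$S_1=\bigcup_{\zeta\in\C\setminus\pi i\Z}\orb_{\exp(\zeta\check\omega_1)}\ \sqcup\ G\cdot\!\left(\sigma_1\,\ind_{L_1}^{M_1}\{1\}\right)\ \sqcup\ \ind_{L_1}^{G}\{1\}.$$
The middle term is birational: since $M_1$ is of type $\mathsf{C}_{n-1}\mathsf{C}_1$ and $L_1$ is its Levi obtained by dropping the $\mathsf{C}_1=\mathsf{A}_1$ node $-\beta$, the generalized Springer map factors through the two simple factors, and $\ind_{L_1}^{M_1}\{1\}$ is trivial on the $\mathsf{C}_{n-1}$-factor and regular unipotent on the $\mathsf{A}_1$-factor, with representative $x_\beta(1)$; induction in type $\mathsf A$ is always birational (Remark~\ref{rk_typea}), so the resulting class $\orb_{\sigma_1 x_\beta(1)}$ lies in $\overline{J(\tau_1)}^{bir}$. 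For the last term I would compute the Richardson orbit of $P_{\Theta_1}$ and identify it with $\orb_{\mathbf d}$; as $\mathbf d$ has full members, it is birationally rigid by the classification of~\ref{sss_bruc} and so cannot be birationally induced from the proper Levi $L_1$, hence it is discarded and $\overline{J(\tau_1)}^{bir}=S_1\setminus\orb_{\mathbf d}$. The statement for $\hat z S_1$ then follows by translating $\tau_1$ to $\tau_1'=(L_1,Z(L_1)^\circ\hat z,\{1\})$, which is admissible because $L_1$ is a Levi and every component of $Z(L_1)$ satisfies \eqref{RP} (Remark~\ref{rk_itsalevi}), using $\overline{J(\tau_1')}^{bir}=\hat z\,\overline{J(\tau_1)}^{bir}$.

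For (ii), $L_n$ is maximal with $Z(L_n)$ connected, so the only walls occur at $z\in Z(G)=\{1,\hat z\}$, yielding
$$S_n=\bigcup_{\zeta\in\C\setminus 2\pi i\Z}\orb_{\exp(\zeta\check\omega_n)}\ \sqcup\ \ind_{L_n}^{G}\{1\}\ \sqcup\ \hat z\,\ind_{L_n}^{G}\{1\}.$$
The only real content is to prove that $\orb_{\mathbf f}=\ind_{L_n}^{G}\{1\}$ is birationally, and not merely ordinarily, induced. I would realize $L_n$ as the Siegel Levi $\GL_n\subset\Sp_{2n}$ with abelian unipotent radical $U_n\cong\mathrm{Sym}^2(\C^n)$, take the representative $x\in\orb_{\mathbf f}\cap U_n$ given by a nondegenerate symmetric form $X$, and verify that $C_G(x)=\mathrm O_n\ltimes U_n\subseteq P_{\Theta_n}$: the condition $AXA^{t}=X$ forces the reductive part to be $\mathrm O_n\subset\GL_n=L_n$, the abelian radical $U_n$ commutes with $x$, and a dimension count matches $\dim C_G(x)=n^2=\dim(\mathrm O_n\ltimes U_n)$. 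Since then $C_G(x)=C_{P_{\Theta_n}}(x)$, Lemma~\ref{lem_centrinpar}(iii) yields birationality, and the same holds for the translate by $\hat z$. Thus the whole sheet is birational and $S_n=\overline{J(\tau_n)}^{bir}$.

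The main obstacle is the explicit birationality check for $\orb_{\mathbf f}$ in (ii): one must control the \emph{full} centralizer of the chosen representative, including its component group $\mathrm O_n/\SO_n$, and confirm that it stays inside $P_{\Theta_n}$ (in particular that no centralizing element acquires a lower-left block), rather than merely its identity component $\SO_n\ltimes U_n$. In (i), by contrast, the one delicate point — that the Richardson class $\orb_{\mathbf d}$ must be removed — is settled outright by the birational rigidity classification of~\ref{sss_bruc}, so the remaining work is the bookkeeping of the wall structure and the reduction of the degeneration at $\sigma_1$ to type $\mathsf A$.
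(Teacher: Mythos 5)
Your proposal is correct and follows the paper's proof essentially verbatim: the same three-piece decomposition of $S_1$ with the reduction of the degeneration at $\sigma_1$ to type $\mathsf{A}$ via Remark \ref{rk_typea}, the full-members criterion of \S\ref{sss_bruc} to discard $\orb_{[2^2,1^{2(n-1)}]}$, and Lemma \ref{lem_centrinpar} applied to an explicit representative of $\orb_{[2^n]}$ in the unipotent radical --- the paper's witness $u=x_{\beta_1}(1)\cdots x_{\beta_n}(1)$ with $C_G(u)\leq P_{\Theta_n}$ is precisely your Siegel-model element with $X=I$. The component-group issue you flag as the main obstacle dissolves immediately in your own setup: any $g$ centralizing $x$ preserves $\image(x-1)$, which is the Lagrangian whose stabilizer is $P_{\Theta_n}$ (equivalently, writing $g$ in block form, commutation with $x$ gives $XC=0$, so the lower-left block $C$ vanishes), whence the full centralizer, components included, lies in $P_{\Theta_n}$.
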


\begin{proof}
(i) $L_1$ is of type ${\sf T}_1 {\sf C}_{n-1}$ and $L_1<M_1<G$
 and
$Z(L_1) = Z(L_1)^\circ \sqcup \hat{z} Z(L_1)^\circ$.
Then
$$S_1 = \bigcup_{z \in Z(L_1)^\circ} G \cdot (z \ind_{L_1}^{C_G(z)} \{1\}) =
 G \cdot ((Z(L_1)^\circ)^{reg}) \sqcup 
 G \cdot (\sigma_1 \ind_{L_1}^{M_1} \{1\}) \sqcup \ind_{L_1}^G \{1\}.$$
The class $\ind_{L_1}^{M_1} \{1\}$ is birationally induced, by Remark \ref{rk_typea}.
 The unipotent class $\orb_\mathbf{d} = \ind_{L_1}^G \{1\}$ is not birationally induced from $(L_1, \{1\})$, indeed it is birationally rigid by \S \ref{sss_bruc} and it coincides with a whole  birational sheet.
Hence
 $$
 \overline{J(\tau_1)}^{bir}=\bigcup\limits_{\zeta \in \C \setminus \pi i \Z} \orb_{\exp(\zeta \check{\omega}_1)}  \sqcup  \orb_{\sigma_1 x_{\beta_1}(1)}
 $$
 and
 $Z(G) \overline{J(\tau_1)}^{bir}= \overline{J(\tau_1)}^{bir}\sqcup \hat z \overline{J(\tau_1)}^{bir}$.
 
(ii) $L_n$ is maximal of type ${\sf T}_1\sf{\tilde A}_{n-1}$ and $Z(L_n)$ is connected, as $L_n=C_G(\exp \check \omega_n)$ and $\exp (2\pi i\check \omega_n)=\hat z$.
$$S_n = \bigcup_{z \in Z(L_n)} G \cdot (z \ind_{L_n}^{C_G(z)} \{1\}) = G \cdot (Z(L_n)^{reg}) \sqcup  \ind_{L_n}^{G} \{1\} \sqcup\hat{z} \ind_{L_n}^{G} \{1\}.$$
We have $\ind_{L_n}^{G} \{1\} = \orb_{\mathbf{f}}$, with 
$u = x_{\beta_1}(1)\cdots x_{\beta_n}(1)\in \orb_{\mathbf{f}}=X_n$ 
 satisfies $C_G(u) \leq P_{\Theta_n}$,
  so that $X_n$ is birationally induced and $S_n$ is a birational sheet
 $$
 \overline{J(\tau_n)}^{bir} =\overline{J(\tau_n)}^{reg}=S_n=\bigcup\limits_{\zeta \in \C \setminus 2 \pi i \Z} 
\orb_{\exp(\zeta \check{\omega}_n)}
 \sqcup X_n \sqcup \hat z X_n$$
 and
 $Z(G) \overline{J(\tau_n)}^{bir}= \overline{J(\tau_n)}^{bir}$.
 \end{proof}
 
We consider the remaining spherical pseudo-Levi subgroups.
\begin{enumerate}[label=(\roman*)]
\item For $\ell = 1, \dots, p$,
$M_\ell$ is maximal of type ${\sf C}_\ell {\sf C}_{n-\ell}$ and $Z(M_\ell)=\langle\sigma_\ell\rangle\times Z(G)$. 
Then, for $(M_\ell, \{\sigma_\ell\},\{1\})$ we get $\orb_{\sigma_\ell}$, a (birational) sheet consisting of an isolated class. 
We have $Z(G) \orb_{\sigma_\ell}= \orb_{\sigma_\ell} \sqcup \hat z \orb_{\sigma_\ell}$ except when $n=2p$, $\ell=p$, in which case $\sigma_\ell$ and $\hat z \sigma_\ell$ are $G$-conjugate and $Z(G) \orb_{\sigma_p}= \orb_{\sigma_p}$.
\item For $\ell = 2, \dots, p$, the pseudo-Levi  $M_\ell$ of type ${\sf C}_\ell {\sf C}_{n-\ell}$  admits the birationally rigid unipotent class $\orb_{x_{\beta_1}(1)}^{M_\ell}$
 of the form
 $ [2, 1^{2\ell-2}] \times \{1\}$.
Then $\orb_{\sigma_\ell x_{\beta_1}(1)}$ is a (birational) sheet consisting of an isolated class. 
\item  For $\ell = 1, \dots, p$, the pseudo-Levi $M_\ell$ of type $  {\sf C}_\ell {\sf C}_{n-\ell}$ has the birationally rigid unipotent class $\orb_{x_{\alpha_n}(1)}^{C_G(\sigma_\ell)}$ of the form
 ${\{1\}\times[2, 1^{2(n-\ell)-2}]}$.
Then $\orb_{\sigma_\ell x_{\alpha_n}(1)}$ is a (birational) sheet consisting of an isolated class.

In cases (ii) and (iii), we have
 $Z(G) \orb_{\sigma_\ell x_{\beta_1}(1)}= \orb_{\sigma_\ell x_{\beta_1}(1)} \sqcup \hat z \orb_{\sigma_\ell x_{\beta_1}(1)}$ and $Z(G) \orb_{\sigma_\ell x_{\alpha_n}(1)}= \orb_{\sigma_\ell x_{\alpha_n}(1)} \sqcup \hat z \orb_{\sigma_\ell x_{\alpha_n}(1)}$. 
  The only case which needs further explanation is when $n=2p$, $\ell=p$: then  $\sigma_p$ and $\hat z \sigma_p$ are $G$-conjugate, but
  $\sigma_p x_{\beta_1}$ and $\hat z \sigma_p x_{\beta_1}$ are not $G$-conjugate.
\end{enumerate}

\begin{rk}  \label{rk_notchar}
This is an example of $(M,Z^\circ s_1, \orb^M)$, $(M,Z^\circ s_2, \orb^M)$ in $\mathscr{BB}(G)$ with $(M,Z^\circ s_1)$, $(M,Z^\circ s_2)$ $G$-conjugate, but $(M,Z^\circ s_1, \orb^M)$, $(M,Z^\circ s_2, \orb^M)$ not $G$-conjugate: in this case the rigid orbit $\orb^M$ of $M$ is not characteristic.
\end{rk}

Up to central elements, the remaining spherical conjugacy classes in $G$ are $X_\ell$ corresponding to the partition $[2^\ell,1^{2n-2\ell}]$, for $\ell=1,\ldots,n-1$: these are all birationally rigid unipotent conjugacy classes in $G$, see \S \ref{sss_bruc}.

\begin{table}[H] 
\centering
{\renewcommand{\arraystretch}{1.5}
%\hskip-1.5cm
\begin{tabular}{|c|c|c|c|}
\hline
$\tau$                       &$\overline{J(\tau)}^{bir}$   &               $\Lambda(\orb)$ & $d$ \\ \hline
$ (L_n, Z(L_n), \{1\})$      & 
$\bigcup\limits_{\zeta \in \C \setminus  \pi i \Z} 
\orb_{\exp(\zeta \check{\omega}_n)}
 \sqcup X_n\sqcup \hat z X_n$
  & $\sum\limits_{i=1}^{n} 2 n_i\omega_i$&1 \\ \hline
$ (L_1, Z(L_1)^\circ, \{1\})$ &
$\bigcup\limits_{\zeta \in \C \setminus \pi i \Z} \orb_{\exp(\zeta \check{\omega}_1)}  \sqcup  \orb_{\sigma_1 x_{\beta_1}(1)}$
& $2 n_1\omega_1 +  n_2\omega_2$ &2\\ \hline
\makecell{$(M_\ell, \{\sigma_{\ell}\}, \{1\})$\\ $\ell = 1, \dots, p-1$} & $\orb_{\sigma_\ell}$
&  $ \sum\limits_{i=1}^{\ell} n_{2i}\omega_{2i}$&2 \\\hline
\makecell{$ (M_p, \{\sigma_p\}, \{1\})$\\ if $n=2p +1$}
& \multirow{2}{*}{ $\orb_{\sigma_p}$}
& \multirow{2}{*}{ $ \sum\limits_{i=1}^{p} n_{2i}\omega_{2i}$}&2 \\\cline{1-1}\cline{4-4}
\makecell{$ (M_p, \{\sigma_p\}, \{1\})$\\ if $n=2p$}
&  & &1\\ \hline
$ (M_p, \{\sigma_p\}, \orb^{M_p}_{\{1\}\times[2, 1^{2(n- p)-2}]})$ & 
$\orb_{\sigma_p x_{\alpha_n}(1)}$ &    
$\sum\limits_{i = 1}^n n_i \omega_i \mid \sum\limits_{i = 1}^{\left\lfloor \frac{n+1}{2} \right\rfloor} n_{2i-1} \in 2\N$ &2 \\ \hline
\makecell{$ (M_\ell, \{\sigma_\ell\}, \orb^{M_\ell}_{\{1\}\times[2, 1^{2(n-\ell)-2}]})$\\$\ell = 1, \dots, p-1$} & 
$\orb_{\sigma_\ell x_{\alpha_n}(1)}$ &  
$\sum\limits_{i = 1}^{2\ell+1} n_i \omega_i \mid \sum\limits_{i = 1}^{\ell +1} n_{2i-1} \in 2\N$  &2  \\ \hline
\makecell{$ (M_\ell, \{ \sigma_\ell\}, \orb^{M_\ell}_{[2, 1^{2\ell-2}] \times \{1\}})$\\ $\ell = 2, \dots, p$} & 
$\orb_{\sigma_\ell x_{\beta_1}(1)}$ &        
$\sum\limits_{i = 1}^{2\ell} n_i \omega_i \mid \sum\limits_{i = 1}^{\ell} n_{2i-1} \in 2\N$ &2 \\ \hline
\makecell{$(G, \{1\}, \orb_{[2^{\ell},1^{2n - 2\ell}]})$\\ $\ell = 1, \dots, n-1 $ }& $X_\ell$ & $ \sum\limits_{i=1}^{\ell} 2n_i\omega_{i}$ 
& 2  	\\ \hline
\end{tabular}}
\caption{Type $\mathsf{C}_n, n \geq 3, p = \left\lfloor \frac{n}{2}\right\rfloor$.} \label{tab_cn}
\end{table}

\begin{prop}
Theorem \ref{thm_principal} holds for $G$ of type $\mathsf{C}_n$, $n\geq 3$.	
\end{prop}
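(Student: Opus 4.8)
The plan is to follow the template already used for types $\mathsf{A}_n$ and $\mathsf{C}_2$. The preceding lemmas of this subsection, together with the case analysis (i)--(iii), have exhausted the spherical pseudo-Levi subgroups and the associated birationally rigid unipotent classes, producing every spherical birational sheet $\overline{J(\tau)}^{bir}$ with $\tau \in \mathscr{BB}(G)$; these are recorded in Table \ref{tab_cn}, to which one adjoins the central classes $\{z\}$, $z \in Z(G)$. Since birational sheets partition $G$ by \cite[Theorem 5.1]{AF} and sphericity is constant along them, they partition $G_{sph}$, which settles the first assertion of Theorem \ref{thm_principal} for type $\mathsf{C}_n$.

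For the implication ``$\overline{J(\tau_2)}^{bir} = z\overline{J(\tau_1)}^{bir} \Rightarrow \C[\orb_1]\simeq_G\C[\orb_2]$'' (the group analogue of Proposition \ref{prop_losevconj}) I would argue that the weight monoid $\Lambda(\orb)$ is constant as $\orb$ ranges over $\overline{J(\tau)}^{bir}$ and, by Remark \ref{rk_uptocentre}, is unchanged under translation by $Z(G)$, hence is constant on each $Z(G)\overline{J(\tau)}^{bir}$. This constancy is read off directly from the explicit descriptions of weight monoids of spherical classes in type $\mathsf{C}$ in \cite[Table 3, 4, 5]{Costantini2010}: for each $\tau$ in the first column of Table \ref{tab_cn}, every class in $\overline{J(\tau)}^{bir}$ carries the monoid displayed in the third column. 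Because spherical classes are multiplicity-free, $\C[\orb]$ is determined as a $G$-module by $\Lambda(\orb)$, so classes in a common $Z(G)$-translate have isomorphic coordinate rings.

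For the converse (the group analogue of Conjecture \ref{conj}) the whole matter reduces to checking that the third column of Table \ref{tab_cn} lists pairwise distinct monoids: then $\C[\orb_1]\simeq_G\C[\orb_2]$, equivalently $\Lambda(\orb_1)=\Lambda(\orb_2)$, can occur only when $\orb_1$ and $\orb_2$ lie in the same $Z(G)\overline{J(\tau)}^{bir}$, which is exactly $\overline{J(\tau_2)}^{bir} = z\overline{J(\tau_1)}^{bir}$. I expect this bookkeeping to be the main obstacle, since the entries are parametrized families rather than single dominant weights. I would separate them by three invariants: the \emph{support} (which $\omega_i$ occur with nonzero coefficient), whether the coefficients are individually constrained to even values (as in $\sum 2n_i\omega_i$) or unconstrained, and the congruence conditions $\sum n_{2i-1}\in 2\N$ that cut out proper submonoids. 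Thus the families $\Lambda(X_\ell) = \langle 2\omega_1,\dots,2\omega_\ell\rangle$ are told apart by their support and from the full-support rows likewise; the sheets attached to $M_\ell$ of type $\sum_{i=1}^\ell n_{2i}\omega_{2i}$ are recognised by having support in the even-indexed fundamental weights only. The genuinely delicate comparisons are those between rows of equal support, where one must observe, for instance, that $\sum_{i=1}^n 2n_i\omega_i$ omits $\omega_2$ (it would require $2n_2=1$), whereas the congruence-constrained monoid $\{\sum_{i=1}^n n_i\omega_i \mid \sum n_{2i-1}\in 2\N\}$ contains it. Running through the rows of Table \ref{tab_cn} in this fashion yields pairwise distinctness and completes the proof.
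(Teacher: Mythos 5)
Your skeleton (partition of $G_{sph}$ via \cite[Theorem 5.1]{AF}, constancy of $\Lambda(\orb)$ on each $Z(G)\overline{J(\tau)}^{bir}$ read off from \cite[Table 3, 4, 5]{Costantini2010}, then separation of the monoids in the third column of Table \ref{tab_cn}) matches the paper's, but the final step contains a genuine error: in type $\mathsf{C}_n$ the entries of the third column are \emph{not} pairwise distinct when $n$ is even. For $n=2p$, the row $(M_p, \{\sigma_p\}, \orb^{M_p}_{\{1\}\times[2, 1^{2(n-p)-2}]})$ gives $\Lambda(\orb_{\sigma_p x_{\alpha_n}(1)})=\bigl\{\sum_{i=1}^{n} n_i\omega_i \mid \sum_{i=1}^{p} n_{2i-1}\in 2\N\bigr\}$ (note $\left\lfloor \frac{n+1}{2}\right\rfloor = p$ here), while the $\ell=p$ instance of the row $(M_\ell, \{\sigma_\ell\}, \orb^{M_\ell}_{[2,1^{2\ell-2}]\times\{1\}})$ gives $\Lambda(\orb_{\sigma_p x_{\beta_1}(1)})=\bigl\{\sum_{i=1}^{2p} n_i\omega_i \mid \sum_{i=1}^{p} n_{2i-1}\in 2\N\bigr\}$; since $2p=n$, these two monoids coincide. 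Your three invariants (support, evenness of coefficients, congruence conditions) cannot distinguish these rows, so the assertion that ``running through the rows yields pairwise distinctness'' is false precisely here, and your argument proves nothing for this pair of birational sheets --- which are genuinely distinct sheets, each a single isolated class.

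The missing idea, and the actual content of the paper's proof, is that this coincidence is harmless because the two sheets are central translates of one another, and this requires a separate conjugacy argument tied to Remark \ref{rk_notchar}: for $n=2p$ the rigid class $\orb^{M_p}$ is not characteristic in $M_p$ of type $\mathsf{C}_p\mathsf{C}_p$, so Remark \ref{rk_characteristic} does not apply to these rows. Concretely, $\sigma_p$ and $\hat z\sigma_p$ are $G$-conjugate, and an element realizing this conjugacy swaps the two $\mathsf{C}_p$-factors of $M_p$, carrying $\{1\}\times[2,1^{n-2}]$ to $[2,1^{n-2}]\times\{1\}$; hence $\sigma_p x_{\alpha_n}(1)$ and $\hat z\sigma_p x_{\beta_1}(1)$ are $G$-conjugate, so the triples $\tau_1=(M_p,\{\sigma_p\},\orb^{M_p}_{x_{\alpha_n}(1)})$ and $(M_p,\{\hat z\sigma_p\},\orb^{M_p}_{x_{\beta_1}(1)})$ are $G$-conjugate and $\overline{J(\tau_1)}^{bir}=\hat z\,\overline{J(\tau_2)}^{bir}$ --- exactly the ``equal up to a shift by $Z(G)$'' conclusion that Theorem \ref{thm_principal} requires (note $\sigma_p x_{\beta_1}(1)$ and $\hat z\sigma_p x_{\beta_1}(1)$ are \emph{not} $G$-conjugate, so the two sheets really are distinct and $d=2$ for these rows). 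With this one case supplied, the remainder of your separation bookkeeping goes through as in the paper.
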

\begin{proof}
From \cite[Table 3, 4, 5]{Costantini2010}
the weight monoid is preserved along the classes in each $Z(G)\overline{J(\tau)}^{bir}$.
Assume $\Lambda(\orb_1)=\Lambda(\orb_2)$
 for $\orb_1 \subset Z(G)\overline{J(\tau_1)}^{bir}, \orb_2 \subset Z(G)\overline{J(\tau_2)}^{bir}$. 
This is possible if and only if $n=2p$ for $p \in \N, p \geq 1$, and 
$$
\tau_1=(C_p, \{\sigma_p\}, \orb^{C_p}_{\{1\}\times[2, 1^{n-2}]})=
(M_p, \{\sigma_p\}, \orb^{M_p}_{x_{\alpha_n(1)}})
$$
$$
\tau_2=(C_p, \{\sigma_p\}, \orb^{C_p}_{[2, 1^{n-2}]\times \{1\}})=
(M_p, \{\sigma_p\}, \orb^{M_p}_{x_{\beta_1(1)}})
$$
However in this case $\sigma_p$ and $\hat z\sigma_p$ are $G$-conjugate, and so are $\sigma_p x_{\alpha_n}(1)$ and $\hat z\sigma_p x_{\beta_1}(1)$.
 Therefore $\tau_1$ and $(M_p, \{\hat z
\sigma_p\}, \orb^{M_p}_{x_{\beta_1(1)}})$ are $G$-conjugate, i.e. $\overline{J(\tau_1)}^{bir}=\hat z \overline{J(\tau_2)}^{bir}$.
\end{proof}

\subsection{Type $\mathsf{B}_n, n \geq 3$}
We have $\beta=\alpha_1+2\alpha_2+\cdots+2\alpha_{n-1}+2\alpha_n$ and
$Z(G)=\langle\hat{z}\rangle$ with $\hat{z} = \alpha^\vee_n(-1)$.

The following result holds indepedently of the parity of $n$.

 \begin{lem}
Let $\tau_1 = (L_1, Z(L_1), \{1\})$. Then $S_1 := \overline{J(\tau_1)}^{reg} = \overline{J(\tau_1)}^{bir}$ is a spherical birational sheet of $G$ containing the unipotent class $\orb_{\mathbf{d}}$, with $\mathbf{d} = [3,1^{2n-2}]$.
\end{lem}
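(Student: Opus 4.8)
The plan is to compute the regular closure $S_1=\overline{J(\tau_1)}^{reg}$ explicitly and then check that \emph{every} class appearing in it is birationally induced from $(L_1,\{1\})$, so that the birational closure fills the whole sheet. First I would record the structure of $L_1=L_{\Theta_1}$: since $c_1=1$ it is a maximal connected reductive subgroup of type $\mathsf{T}_1\mathsf{B}_{n-1}$, i.e. $L_1\cong\GL_1\cdot\mathrm{Spin}_{2n-1}$, with $Z(L_1)=\exp(\C\check\omega_1)$ a one–dimensional torus. A short coweight computation ($\check\omega_1=e_1$ and $\hat z=\exp(2\pi i\check\omega_1)$) shows $\hat z\in Z(L_1)^\circ$, so $Z(L_1)$ is connected, which is why $\tau_1$ is written without a component marker. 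Every root $\gamma\notin\Phi(L_1)$ has $\alpha_1$–coefficient $\pm1$, so $\gamma(\exp(\zeta\check\omega_1))=e^{\pm\zeta}$ and $C_G(\exp(\zeta\check\omega_1))=L_1$ unless $\zeta\in 2\pi i\Z$, i.e. unless $z\in\{1,\hat z\}=Z(G)$; in particular there is no intermediate pseudo-Levi and $\Lie(U_{\Theta_1})$ is abelian of dimension $2n-1$, realising under $L_1$ the standard module of $\mathrm{SO}_{2n-1}$. Hence
$$S_1=\overline{J(\tau_1)}^{reg}=\bigcup_{\zeta\in\C\setminus 2\pi i\Z}\orb_{\exp(\zeta\check\omega_1)}\ \sqcup\ \orb_{\mathbf d}\ \sqcup\ \hat z\,\orb_{\mathbf d},\qquad \mathbf d=[3,1^{2n-2}],$$
where $\orb_{\mathbf d}=\ind_{L_1}^G\{1\}$ is the Richardson class of $P_{\Theta_1}$; this identification follows from $\dim\orb_{\mathbf d}=4n-2=2\dim U_{\Theta_1}$ together with the fact that the highest short root vector lies in the dense $L_1$–orbit of $\Lie(U_{\Theta_1})$.

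The crux is to show that $\orb_{\mathbf d}$ is birationally, not merely rationally, induced from $(L_1,\{1\})$. I would \emph{not} try to invoke Lemma \ref{lem_compgp}: the partition $[3,1^{2n-2}]$ has two distinct odd parts, so the centraliser of a representative nilpotent in the adjoint group $\overline G=\SO_{2n+1}$ is disconnected with $\pi_0\cong\Z/2$, and the connectedness hypothesis of Lemma \ref{lem_compgp} fails. Instead I would apply Lemma \ref{lem_centrinpar}(iii) to the explicit representative $\nu=e_{\tilde\beta}$, the root vector of the highest short root $\tilde\beta=e_1$; via a Springer isomorphism $\phi$ set $x=\phi(\nu)\in\orb_{\mathbf d}\cap U_{\Theta_1}$, so that $C_G(x)=C_G(\nu)$ and it suffices to prove $C_G(\nu)\subseteq P:=P_{\Theta_1}$. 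For the identity component I would use the $\mathfrak{sl}_2$–triple $(e_{\tilde\beta},H_{\tilde\beta},e_{-\tilde\beta})$: with respect to $\ad(H_{\tilde\beta})$ the grading of $\lieg$ is $\lien^-\oplus\liel\oplus\lien$ in degrees $-2,0,2$, which is exactly the parabolic grading of $\liep$, so the standard inclusion $\lieg_\nu\subseteq\bigoplus_{i\ge0}\lieg(i)$ gives $\lieg_\nu\subseteq\liep$ and hence $C_G(\nu)^\circ\subseteq P$.

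It remains to place the non-identity component of $C_G(\nu)$ inside $P$, and this is the main obstacle. Working in $\overline G=\SO_{2n+1}$ on the standard module, $\nu$ has a single size-$3$ Jordan block on the span of the weight vectors of weights $-e_1,0,e_1$ and is zero on the complementary $(2n-2)$–space; the non-trivial class of $\pi_0(C_{\overline G}(\nu))\cong\Z/2$ is represented by $g_0$ acting as $-1$ on that block and by a determinant $-1$ reflection on the complement. Since $P_{\Theta_1}$ is the stabiliser of the isotropic line spanned by the highest weight vector (weight $e_1$), and $g_0$ scales that vector by $-1$, we get $g_0\in P$; thus $C_{\overline G}(\nu)\subseteq P_{\overline G,\Theta_1}$, and pulling back along the isogeny $\mathrm{Spin}_{2n+1}\to\SO_{2n+1}$ yields $C_G(\nu)\subseteq P$. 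By Lemma \ref{lem_centrinpar} the map \eqref{eq_GSM} is birational, so $1\in\bir(Z(L_1),\{1\})$; the same computation at the central point (equivalently Remark \ref{rk_uptocentre}) gives $\hat z\in\bir(Z(L_1),\{1\})$, whence $\bir(Z(L_1),\{1\})=Z(L_1)$ and $\overline{J(\tau_1)}^{bir}=\overline{J(\tau_1)}^{reg}=S_1$. Finally $S_1$ is spherical because a generic class $\orb_{\exp(\zeta\check\omega_1)}\cong G/L_1$ is spherical, $L_1$ being one of the spherical Levi subgroups listed at the beginning of the section; hence $S_1$ is a spherical birational sheet containing $\orb_{\mathbf d}$, and none of this uses the parity of $n$.
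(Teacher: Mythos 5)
Your proof is correct and takes essentially the same route as the paper: you obtain the identical decomposition $S_1 = \bigcup_{\zeta \in \C \setminus 2\pi i \Z}\orb_{\exp(\zeta\check\omega_1)} \sqcup \orb_{\mathbf d} \sqcup \hat z\,\orb_{\mathbf d}$ and establish birational induction via Lemma \ref{lem_centrinpar} at the highest short root element $x_{\gamma_1}(1)$, which is exactly the witness the paper uses. The only difference is that you actually prove the containment $C_G(u)\leq P_{\Theta_1}$ (identity component via the $\ad(H_{\tilde\beta})$-grading, the non-identity component via the explicit reflection in $\SO_{2n+1}$), a step the paper asserts without detail, and you correctly observe that Lemma \ref{lem_compgp} cannot be substituted here because $C_{\overline G}(\nu)$ is disconnected for the partition $[3,1^{2n-2}]$.
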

\begin{proof}
$L_1$ is maximal of type ${\sf T}_1 {\sf B}_{n-1}$  and $Z(L_1)$ is connected since $L_1=C_G(\exp \check \omega_1)$ and $\exp (2\pi i\check \omega_1)=\hat z$.
$$
S_1 = \bigcup\limits_{z \in Z(L_1)} G \cdot ( z\ind_{L_1}^{C_G(z)} \{1\})
= G \cdot (Z(L_1)^{reg}) \sqcup
  \orb_{\mathbf{d}} \sqcup \hat z
  \orb_{\mathbf{d}}.
  $$
 We have $\ind_{L_1}^{G} \{1\} = \orb_{\mathbf{d}}$, with $\mathbf{d} = [3,1^{2n-2}]$
 and
$u = x_{\gamma_1}(1)\in \orb_{\mathbf{d}}=Z_1$, where $\gamma_1=e_1$ is the highest short root of $G$, 
 satisfies $C_G(u) \leq P_{\Theta_1}$,
  so that $Z_1$ is birationally induced and $S_1$ is a birational sheet. Therefore
$$
{S}_1 = \bigcup_{\zeta \in \C \setminus 2\pi i \Z} \orb_{\exp(\zeta \check{\omega}_1)}\sqcup Z_{1}\sqcup \hat z Z_{1}
$$
and
 $Z(G) \overline{J(\tau_1)}^{bir}= \overline{J(\tau_1)}^{bir}$.
\end{proof}

\subsubsection{Type $\mathsf{B}_{2m+1}, m \geq 1$}
In this section we deal with cases $n = 2m+ 1$, $m \geq 1$.
 We set $u_k:=\prod_{i=1}^{k} x_{\beta_i} (1)$ for $k=1, \dots, m$.

 \begin{lem}
Let $\tau_n = (L_n, Z(L_n), \{1\})$ and let $S_n := \overline{J(\tau_n)}^{reg}$.
Then $S_n = \overline{J(\tau_n)}^{bir}$ is a spherical birational sheet in $G$.
\end{lem}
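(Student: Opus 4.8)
The plan is to describe $S_n=\overline{J(\tau_n)}^{reg}$ explicitly and to verify that the generalized Springer map is birational at every point of $Z(L_n)$, so that $\overline{J(\tau_n)}^{bir}=\overline{J(\tau_n)}^{reg}=S_n$. First I would record the structure of the datum: removing the short simple root $\alpha_n$ gives the maximal Levi $L_n=L_{\Theta_n}=C_G(\exp\check{\omega}_n)$ of type $\mathsf{T}_1\mathsf{A}_{n-1}$, sitting in $G=\mathrm{Spin}_{2n+1}(\C)$ as the $\GL_n$ stabilising a maximal isotropic subspace. Since $n=2m+1$ is odd one computes $\exp(2\pi i\check{\omega}_n)=\hat{z}$, the generator of $Z(G)$; hence $Z(G)\subseteq Z(L_n)^\circ$ and $Z(L_n)$ is connected, exactly as in the type $\mathsf{C}$ analogue. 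Using $S_n=\bigcup_{z\in Z(L_n)}G\cdot(z\,\ind_{L_n}^{C_G(z)}\{1\})$ I would then locate the finitely many $z$ at which $C_G(z)$ jumps: writing $z=\exp(2\pi i\theta\check{\omega}_n)$, a root $\alpha$ centralises $z$ iff $\theta$ times the $\alpha_n$-coefficient of $\alpha$ is an integer, and since that coefficient ranges over $\{0,\pm1,\pm2\}$ the jumps occur exactly at $\theta\in\tfrac12\Z$. Thus $C_G(z)=G$ for $z\in\{1,\hat{z}\}$, $C_G(z)=M_n$ of type $\mathsf{D}_n$ for $z\in\{\sigma_n,\hat{z}\sigma_n\}$, and $C_G(z)=L_n$ otherwise.

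This exhibits $S_n$ as the disjoint union of the generic part $G\cdot(Z(L_n)^{reg})$, the two classes $\orb_{[3,2^{n-1}]}$ and $\hat{z}\,\orb_{[3,2^{n-1}]}$ at $z\in\{1,\hat{z}\}$, and the two pieces $G\cdot(\sigma_n\,\ind_{L_n}^{M_n}\{1\})$ and its $\hat{z}$-translate at $z\in\{\sigma_n,\hat{z}\sigma_n\}$; here a dimension count (or the Richardson description for the Siegel parabolic $P_{\Theta_n}$ of $\SO_{2n+1}$) identifies $\ind_{L_n}^G\{1\}$ with $\orb_{[3,2^{n-1}]}$, of dimension $n^2+n=2\dim\mathfrak{n}_{P_{\Theta_n}}$, and $\ind_{L_n}^{M_n}\{1\}$ with the Siegel Richardson orbit $\orb^{M_n}_{[2^{n-1},1^2]}$ of $\SO_{2n}$. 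To conclude $\overline{J(\tau_n)}^{bir}=S_n$ I must show both inductions are birational (the generic point gives $\ind_{L_n}^{L_n}\{1\}=\{1\}$, trivially birational). For the central points $z\in\{1,\hat{z}\}$ I would apply Lemma \ref{lem_compgp}: the reductive part of the centraliser of a nilpotent $\nu$ of type $[3,2^{n-1}]$ in $\mathfrak{so}_{2n+1}$ is $\mathrm{O}_1\times\Sp_{n-1}$, and the reflection in the $\mathrm{O}_1$ factor acts with determinant $(-1)^3=-1$, hence lies outside $\SO_{2n+1}$; therefore $C_{\overline{G}}(\nu)$ is connected (and $\overline{G}=\SO_{2n+1}$ is already adjoint), so Lemma \ref{lem_compgp} gives that $\orb_{[3,2^{n-1}]}=\ind_{L_n}^G\{1\}$ is birationally induced from $(L_n,\{1\})$.

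For the intermediate points $z\in\{\sigma_n,\hat{z}\sigma_n\}$ the induction takes place inside $M_n$ of type $\mathsf{D}_n$, and $\ind_{L_n}^{M_n}\{1\}=\orb^{M_n}_{[2^{n-1},1^2]}$ is precisely the exceptional Richardson orbit flagged in \S\ref{sss_bruc}, which is birationally induced (from the $\mathsf{A}_{n-1}$-Levi) for $\mathsf{D}_n$ with $n=2m+1$ odd; equivalently one may take $u_m$ as representative and check $C_{M_n}(u_m)\le P_{\Theta_n}\cap M_n$ via Lemma \ref{lem_centrinpar}. Combining the three cases yields $\bir(Z(L_n),\{1\})=Z(L_n)$, so $\overline{J(\tau_n)}^{bir}=\overline{J(\tau_n)}^{reg}=S_n$. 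Since $\{1\}$ is (birationally) rigid in $L_n$ we have $\tau_n\in\mathscr{BB}(G)$, so $S_n$ is a genuine birational sheet; it is spherical because it contains the spherical conjugacy class $\orb_{[3,2^{n-1}]}$ (the classification of \cite{Costantini2010}), whence by condition (ii) of the definition all its classes are spherical. Finally, because $\hat{z}\in Z(L_n)^\circ$ the datum $\tau_n$ is unchanged by translation by $\hat{z}$, so $Z(G)\overline{J(\tau_n)}^{bir}=\overline{J(\tau_n)}^{bir}$.

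The main obstacle, and the feature distinguishing this case from the type $\mathsf{C}$ computation of the previous subsection (where $c_n=1$ and no intermediate centraliser appears), is exactly the spherical pseudo-Levi $M_n$ of type $\mathsf{D}_n$ arising at $z=\sigma_n$: the sheet $S_n$ then contains two genuinely different induced unipotent classes — the $G$-Richardson orbit $[3,2^{n-1}]$ and the $M_n$-Richardson orbit $[2^{n-1},1^2]$ — and birationality must be checked for both. The delicate inputs are the determinant/component-group computation showing that $C_{\SO_{2n+1}}([3,2^{n-1}])$ is connected, so that Lemma \ref{lem_compgp} applies at the central points, together with the appeal to the exceptional birational induction of \S\ref{sss_bruc} at the $\mathsf{D}_n$-points.
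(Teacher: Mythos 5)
Your proposal is correct and takes essentially the same route as the paper: decompose $S_n$ along the jumps of $C_G(z)$ for $z$ in the connected torus $Z(L_n)$ (using $\sigma_n^2=\hat z$), identify the two induced classes $[3,2^{n-1}]$ in $G$ and $[2^{n-1},1^2]$ in $M_n$, and verify birationality at every point; your only divergences are in sourcing, since the paper justifies both induction steps via Lemma \ref{lem_compgp} and the connectedness statements of \cite[p.~399]{Carter2}, whereas you compute the component group of $C_{\SO_{2n+1}}(\nu)$ directly for the $G$-step and invoke the Namikawa exception recorded in \S\ref{sss_bruc} for the $\mathsf{D}_n$-step, both legitimate substitutes. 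One cosmetic slip: the pieces at $\sigma_n$ and $\hat z\sigma_n=\sigma_n^{-1}$ are not disjoint but coincide, since $w_0$ conjugates $\sigma_n$ to $\sigma_n^{-1}$ and $\orb^{M_n}_{u_m}$ is characteristic in $M_n$ (this is how the paper merges them into the single class $\orb_{\sigma_n u_m}$), though this does not affect the validity of your argument.
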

\begin{proof}
$L_n$ is of type ${\sf T}_1{\sf A}_{n-1}$ and $L_n<M_n<G$, where 
$Z(L_n)$ is connected since $L_n=C_G(\exp \check \omega_n)$ and $\sigma_n^2=\hat z$.
Then 
\begin{align*} 
S_n &= \bigcup_{z \in Z(L_n)} G \cdot ( z\ind_{L_n}^{C_G(z)} \{1\})=\\
&=G \cdot (Z(L_n)^{reg} )\cup G \cdot ( \sigma_n \ind_{L_n}^{M_n} \{1\}) \cup G \cdot ( \sigma_n^{-1} \ind_{L_n}^{M_n} \{1\}) \cup \ind_{L_n}^G \{1\}\cup \hat z\ind_{L_n}^G \{1\}.
 \end{align*}
We show that $\ind_{L_n}^{{M_n}} \{1\}$ is birationally induced from $(L_n, \{1\})$.
Observe that $M_n$ is of type ${\sf D}_n$ and
$\ind_{L_n}^{M_n} \{1\}$ is the unipotent class corresponding to $[2^{2n-1},1^2]$  in $\SO_{2n}(\C)$.
Let $K:=M_n$, 
for $u \in \ind_{L_n}^{K}$, we have  $C_{\overline K}(\pi_K(u))$ is connected by \cite[p. 399]{Carter2}, so the claim follows.
Also $\ind_{L_n}^G \{1\}$, the unipotent class corresponding to the partition $[3,2^{n-1}]$, denoted by $Z_{m+1}$ in \cite{Costantini2010}, is birationally induced from $(L_n, \{1\})$.
Indeed, for $u \in \ind_{L_n}^G \{1\}$,  the centralizer $C_{\overline G}(\pi(u))$  is connected by \cite[p. 399]{Carter2}, and we conclude.
Thus $S_n$ is a birational sheet in $G$. 
 We observe moreover that $G \cdot ( \sigma_n \ind_{L_n}^{M_n} \{1\})=
G \cdot ( \sigma^{-1}_n \ind_{L_n}^{M_n} \{1\}) = \orb_{\sigma_n u_m}$, 
as $w_0$ conjugates $\sigma_n$ to its inverse and $\orb^{M_n}_{u_m}$ is characteristic in $M_n$.
Therefore
$$
{S}_n = \overline{J(\tau_n)}^{bir} = \bigcup_{\zeta \in \C \setminus \pi i \Z} \orb^{G}_{\exp(\zeta \check{\omega}_n)}\sqcup  \orb_{\sigma_n u_m} \sqcup Z_{m+1}\sqcup \hat z Z_{m+1}
$$ 
and $Z(G)S_n=S_n$.
\end{proof}

We consider the remaining spherical pseudo-Levi subgroups:
\begin{enumerate}[label=(\roman*)]
\item For $\ell = 2, \dots, n$, the pseudo-Levi $M_\ell$ is maximal of type ${\sf D}_\ell {\sf B}_{n-\ell}$.
If $\ell$ is even we have $\sigma_\ell^2=1$ and 
$Z(M_\ell)=\langle\sigma_\ell\rangle\times Z(G)$;
if $\ell$ is odd we have $\sigma_\ell^2=\hat z$ and 
$Z(M_\ell)=\langle\sigma_\ell\rangle$.
In any case $\sigma_\ell$ and $\hat z\sigma_\ell$ are $G$-conjugate (via the reflection $s_{e_1}$).
Then $\orb_{\sigma_\ell}$
is a (birational) sheet consisting of an isolated class, and $Z(G)\orb_{\sigma_\ell}=\orb_{\sigma_\ell}$.
\item For $\ell = n$, the subgroup $M_n$ of type ${\sf D}_n$ admits the birationally rigid unipotent class  $\orb_{u_k}^{M_n}$,
 corresponding in ${\rm SO}_{2n}(\C)$ to the partition $[2^{2k}, 1^{2(n-2k)}]$, for $k = 1, \dots, m -1$.
Since $\sigma_n \sim_W \sigma_n^{-1}$ by $s_n$ and $\orb^{M_n}_{u_k}$ is characteristic, $Z(G) \orb_{\sigma_n u_k}=\orb_{\sigma_n u_k}$ is a (birational) sheet consisting of an isolated class.
\end{enumerate}

Up to central elements, the remaining spherical conjugacy classes in $G$ are $X_\ell$ corresponding to the partition $[2^{2 \ell}, 1^{2n + 1 - 4 \ell}]$, for $\ell=1,\ldots,m$ and $Z_\ell$ corresponding to the partition $[3,2^{2( \ell - 1)}, 1^{2n + 2 - 4 \ell}]$, for $\ell=2,\ldots,m$: these are all birationally rigid unipotent conjugacy classes in $G$.

\begin{table}[H]
\centering
{\renewcommand{\arraystretch}{1.5}
\begin{tabular}{|c|c|c|c|}
\hline
$\tau$                       & $\overline{J(\tau)}^{bir}$                 & $\Lambda(\orb)$ & $d$ \\ \hline
$ (L_1, Z(L_1), \{1\})$ & $\bigcup\limits_{\zeta \in \C \setminus 2 \pi i \Z} \orb_{\exp(\zeta \check{\omega}_1) } \sqcup  Z_1\sqcup  \hat zZ_1$ & $2n_1 \omega_1 +  n_2 \omega_2$  & $1$ \\ \hline
$ (L_n, Z(L_n), \{1\})$ & 
\makecell{$\bigcup\limits_{\zeta \in \C \setminus \pi i \Z} \orb_{\exp(\zeta \check{\omega}_n)} \sqcup $\\$\sqcup \orb_{\sigma_n u_m} \sqcup  Z(G)Z_{m+1}$ }&
$\sum\limits_{i=1}^{n-1} n_i\omega_i + 2n_n\omega_n $       & $1$ \\ \hline
\makecell{$(M_\ell, \{\sigma_\ell\}, \{1\})$\\$ \ell = 2, \dots, m$ }&
$\orb_{\sigma_\ell}$ &
$\sum\limits_{i = 1}^{2 \ell -1} 2 n_i\omega_i + n_{2\ell}\omega_{2\ell}$& $1$ \\ \hline
\makecell{$(M_\ell, \{\sigma_\ell\}, \{1\})$\\$ \ell = m+2, \dots, n$} &
$\orb_{\sigma_\ell}$ &
$\sum\limits_{i = 1}^{2(n - \ell)} 2n_i\omega_i + n_{2(n -\ell) + 1}\omega_{2(n -\ell) + 1}$& $1$ \\ \hline
$(M_{m+1}, \{\sigma_{m+1}\}, \{1\})$ &
$\orb_{\sigma_{m+1}}$ &
$\sum\limits_{i = 1}^n 2 n_i \omega_i$& $1$ \\ \hline
\makecell{$ (M_n, \{\sigma_n\}, \orb^{M_n}_{u_\ell})$\\ $\ell = 1, \dots, m-1$} &
$\orb_{\sigma_n u_\ell }$ &
$\sum\limits_{i = 1}^{2 \ell + 1} n_i\omega_i$& $1$ \\ \hline
\makecell{$(G,\{1\}, \orb_{[2^{2 \ell}, 1^{2n + 1 - 4 \ell}]})$ \\ $ \ell = 1, \dots, m$ }&
$X_\ell$ &
$\sum\limits_{i = 1}^{ \ell } n_{2i} \omega_{2i}$ & $2$\\ \hline
\makecell{$(G,\{1\}, \orb_{[3,2^{2( \ell - 1)}, 1^{2n + 2 - 4 \ell}]})$\\$ \ell = 2, \dots, m$}&
$Z_\ell$ &
$\sum\limits_{i = 1}^{ 2\ell } n_i \omega_{i} \mid \sum\limits_{i = 1}^{ \ell }  n_{2i-1} \in 2\N$& $2$ \\ \hline
  \end{tabular}}
  \caption{Type $\mathsf{B}_{n}$, $n=2m+1$, $m \geq 1$.} 
 \label{tab_bnodd}
\end{table}

\begin{prop}
Theorem \ref{thm_principal} holds for $G$ of type $\mathsf{B}_{2m+1}$, for $m \geq 1$.	
\end{prop}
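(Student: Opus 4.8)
The plan is to proceed exactly as for types $\mathsf{A}_n$ and $\mathsf{C}_n$: the partition statement in Theorem \ref{thm_principal} is automatic (spherical birational sheets are precisely the birational sheets meeting $G_{sph}$, and birational sheets already partition $G$), so everything reduces to the biconditional for two spherical classes $\orb_1,\orb_2$, and it suffices to inspect Table \ref{tab_bnodd}, whose rows have been produced by the lemmas and the enumeration (i)--(ii) above.

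For the implication $\overline{J(\tau_2)}^{bir}=z\overline{J(\tau_1)}^{bir}\Rightarrow\C[\orb_1]\simeq_G\C[\orb_2]$ I would argue directly: here $z\in Z(G)=\langle\hat z\rangle$, the class $z\orb_1$ lies in $\overline{J(\tau_2)}^{bir}$, and $\C[\orb_1]\simeq_G\C[z\orb_1]$ by Remark \ref{rk_uptocentre}; since spherical classes have multiplicity-free coordinate rings, $\C[\orb]$ is determined by $\Lambda(\orb)$, so it is enough that $\Lambda$ be constant on $\overline{J(\tau_2)}^{bir}$, which I read off from \cite[Tables 3, 4, 5]{Costantini2010}. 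For the converse I would first use sphericity again to turn $\C[\orb_1]\simeq_G\C[\orb_2]$ into $\Lambda(\orb_1)=\Lambda(\orb_2)$, then verify two things against the third column of Table \ref{tab_bnodd}: that $\Lambda(\orb)$ is indeed independent of $\orb$ within each $Z(G)\overline{J(\tau)}^{bir}$ (the $d$-column records that a central-translation class contains a single birational sheet in every row except the rigid unipotent families $X_\ell$ and $Z_\ell$, where $\hat z X_\ell\neq X_\ell$ and $\hat z Z_\ell\neq Z_\ell$ give $d=2$), and that the listed monoids are pairwise distinct. Together these say that the weight monoid separates the central-translation classes of spherical birational sheets, which is exactly the converse.

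The main obstacle is the pairwise-distinctness check, since several monoids have nearly identical shape. The delicate comparisons are among the three $M_\ell$-families (the ranges $\ell=2,\dots,m$ and $\ell=m+2,\dots,n$ together with the single value $\ell=m+1$), in which every generator carries coefficient $2$ except one terminal fundamental weight, so I expect to separate them by the index of that distinguished weight and by the size of the support; and the $Z_\ell$-family, whose monoid is cut out by the extra parity constraint $\sum_{i=1}^{\ell}n_{2i-1}\in 2\N$, which distinguishes it from every unconstrained monoid on the same set of weights. Unlike the $\mathsf{C}_{2p}$ situation of Remark \ref{rk_notchar}, I do not expect a genuine cross-coincidence between distinct central-translation classes: in all the $d=1$ pseudo-Levi rows $\sigma_\ell$ and $\hat z\sigma_\ell$ are already $G$-conjugate (via $s_{e_1}$) and the relevant unipotent classes $u_\ell$ in $M_n$ are characteristic, so translation by $\hat z$ does not generate a second, differently labelled birational sheet carrying the same weight monoid.
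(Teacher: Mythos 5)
Your proposal is correct and follows essentially the same route as the paper: both reduce the statement to verifying that $\Lambda(\orb)$ is constant on each $Z(G)\overline{J(\tau)}^{bir}$ (using the classification established by the preceding lemmas and the $d$-column bookkeeping) and that the monoids in the third column of Table \ref{tab_bnodd} are pairwise distinct, with your distinctness analysis (parity of the index of the coefficient-$1$ weight separating the $M_\ell$-families, the parity constraint isolating the $Z_\ell$-family) matching the check the paper leaves implicit. One small slip: the weight-monoid data for type $\mathsf{B}_{2m+1}$ comes from \cite[Tables 13, 14, 15]{Costantini2010}, not Tables 3, 4, 5, which are the type-$\mathsf{C}$ tables.
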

\begin{proof}
From \cite[Table 13, 14, 15]{Costantini2010}
the weight monoid is preserved along the classes in each $Z(G)\overline{J(\tau)}^{bir}$. The entries in the third column of Table \ref{tab_bnodd} are pairwise distinct.
\end{proof}

\subsubsection{Type $\mathsf{B}_{2m}, m \geq 2$}
In this section we assume $n = 2m$, $m \geq 2$.
 We set $u_k:=\prod_{i=1}^{k} x_{\beta_i} (1)$ for $k=1, \dots, m$.
\begin{lem}
Let $\tau_n = (L_n, Z(L_n)^\circ, \{1\})$ and let $S_n := \overline{J(\tau_n)}^{reg}$.
Then $S_n = \overline{J(\tau_n)}^{bir} \sqcup \orb_{[3,2^{2(m-1)},1^2]}$, where $\orb_{[3,2^{2(m-1)},1^2]}$ is a birationally rigid unipotent class in $G$. 
\end{lem}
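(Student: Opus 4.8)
The plan is to run the same scheme already used for the other end-node Levi subgroups, while tracking the features peculiar to even rank. First I would record the relevant structure: $L_n$ is of type $\mathsf{T}_1\mathsf{A}_{n-1}$ and sits in the chain $L_n<M_n<G$, where $M_n=C_G(\sigma_n)$ is of type $\mathsf{D}_n$. Since $n=2m$ is even, the fundamental coweight $\check{\omega}_n=e_1+\cdots+e_n$ lies in the coroot lattice, so $\sigma_n^2=\exp(2\pi i\check{\omega}_n)=1$ and $\sigma_n$ is an involution; moreover $\hat z=\alpha_n^\vee(-1)\notin Z(L_n)^\circ$, so $Z(L_n)$ is disconnected. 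This is exactly what distinguishes the present case from the odd one, and explains why $\tau_n$ is formed with $Z(L_n)^\circ$. Writing $z=\exp(\zeta\check{\omega}_n)$ for $z\in Z(L_n)^\circ\cong\C^\times$, the sheet formula for $\overline{J(\tau_n)}^{reg}$ gives
\[
S_n=\bigcup_{\zeta\in\C\setminus\pi i\Z}\orb_{\exp(\zeta\check{\omega}_n)}\ \sqcup\ G\cdot\bigl(\sigma_n\,\ind_{L_n}^{M_n}\{1\}\bigr)\ \sqcup\ \ind_{L_n}^{G}\{1\},
\]
the three pieces corresponding to generic $z$, to $z=\sigma_n$ (where $C_G(z)=M_n$), and to $z=1$ (where $C_G(z)=G$); note that $\sigma_n=\sigma_n^{-1}$, so there is no second contribution as in the odd case. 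It then remains to decide, for each non-generic value, whether the corresponding class is birationally induced from $(L_n,\{1\})$.

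For $z=\sigma_n$ I would identify $\ind_{L_n}^{M_n}\{1\}$ with the very even orbit $\orb_{[2^{n}]}$ of $M_n$ (the Richardson orbit of the Siegel parabolic, of dimension $n(n-1)$), represented by $u_m=\prod_{i=1}^m x_{\beta_i}(1)$, and then invoke Lemma \ref{lem_compgp} inside $M_n$: for $u\in\ind_{L_n}^{M_n}\{1\}$ the centralizer $C_{\overline{M_n}}(\pi_{M_n}(u))$ in the adjoint group of type $\mathsf{D}_n$ is connected by \cite[p.~399]{Carter2}, whence $\ind_{L_n}^{M_n}\{1\}$ is birationally induced from $(L_n,\{1\})$. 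Consequently $\sigma_n\in\bir(Z(L_n)^\circ,\{1\})$ and $\orb_{\sigma_n u_m}=G\cdot(\sigma_n\,\ind_{L_n}^{M_n}\{1\})$ lies in $\overline{J(\tau_n)}^{bir}$. I expect this to be the main obstacle: the orbit $[2^{n}]$ is very even, hence splits into two $M_n$-orbits interchanged by the graph automorphism, so one must make sure that the single orbit produced by induction is one whose centralizer in the adjoint group is connected; this is the reason for appealing to the explicit component-group computation of \cite[p.~399]{Carter2} rather than to a soft argument.

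Finally, for $z=1$ I would identify $\ind_{L_n}^{G}\{1\}$ with $\orb_{[3,2^{2(m-1)},1^2]}$ (of dimension $n(n+1)$, the Richardson orbit of $P_{\Theta_n}$). Its partition $[3,2^{2m-2},1^2]$ ends in $1$ and has all consecutive differences at most $1$, i.e.\ it has full members; since $G$ is of type $\mathsf{B}$, the exceptional partition of \S\ref{sss_bruc} (which lives in type $\mathsf{D}$) does not intervene, so by \S\ref{sss_bruc} the orbit is birationally rigid. In particular $1\notin\bir(Z(L_n)^\circ,\{1\})$, so $\ind_{L_n}^{G}\{1\}$ is removed from $\overline{J(\tau_n)}^{bir}$ and, being birationally rigid, constitutes a birational sheet on its own. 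Combining the three steps, and noting that at generic $z$ the class $\orb_{\exp(\zeta\check{\omega}_n)}$ is semisimple while $z\in\pi i\Z$ yields either the excluded value $z=1$ or the non-semisimple contribution at $z=\sigma_n$, one obtains
\[
\overline{J(\tau_n)}^{bir}=\bigcup_{\zeta\in\C\setminus\pi i\Z}\orb_{\exp(\zeta\check{\omega}_n)}\ \sqcup\ \orb_{\sigma_n u_m},\qquad S_n=\overline{J(\tau_n)}^{bir}\ \sqcup\ \orb_{[3,2^{2(m-1)},1^2]},
\]
with $\orb_{[3,2^{2(m-1)},1^2]}$ birationally rigid, as claimed.
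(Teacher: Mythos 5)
Your proof is correct and follows essentially the same route as the paper: the same three-part decomposition of $S_n$ at generic $z$, $z=\sigma_n$ and $z=1$, the same appeal to Lemma \ref{lem_compgp} with the connectedness of $C_{\overline{M_n}}(\pi_{M_n}(u_m))$ from \cite[p.~399]{Carter2}, and the same full-members criterion of \S\ref{sss_bruc} to conclude that $\orb_{[3,2^{2(m-1)},1^2]}$ is birationally rigid and hence split off as its own birational sheet. Your added checks (that $\sigma_n$ is an involution via the coroot lattice, the identification of $\ind_{L_n}^{M_n}\{1\}$ with the very even orbit $[2^n]$, and the dimension counts) are sound supplementary detail, not a deviation.
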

\begin{proof}
We have $L_n$ of type ${\sf T}_1{\sf A}_{n-1}$ and $L_n<M_n<G$, where  $\sigma_n^2=1$ and $Z(L_n)=Z(L_n)^\circ \sqcup \hat z Z(L_n)^\circ$ with $Z(L_n)^\circ = \exp(\C \check{\omega}_n)$.
Then 
\begin{align*} 
S_n &= \bigcup_{z \in Z(L_n)^\circ} G \cdot ( z\ind_{L_n}^{C_G(z)} \{1\})=\\
&=G \cdot ((Z(L_n)^\circ)^{reg} )\cup G \cdot ( \sigma_n \ind_{L_n}^{M_n} \{1\}) \cup \ind_{L_n}^G \{1\}
 \end{align*}
where the last two members in the union are the isolated classes in $S_n$.
We show that $\ind_{L_n}^{{M_n}} \{1\}$ is birationally induced.
We have ${M_n}$ of type ${\sf D}_n$, and $u_m=x_{\beta_1}(1)\cdots x_{\beta_m}(1)$ is an element of $\ind_{L_n}^{M_n} \{1\}$.
Then if $K:= M_n$, the  centralizer $C_{\overline K}(\pi_K( u_m))$ is connected by \cite[p. 399]{Carter2}, and the claim follows.

We have $\ind_{L_n}^G \{1\} = \orb_{\mathbf{d}}$ with $\mathbf{d} = [3,2^{2(m-1)},1^2]$ a full-member partition (see \S \ref{sss_bruc}), hence $\orb_{\mathbf{d}}$ is birationally rigid, hence not birationally induced from $(L_n, \{1\})$, and it forms a single birational sheet.
 Therefore
$$
\overline{J(\tau_n)}^{bir} = \bigcup_{\zeta \in \C \setminus \pi i \Z} \orb_{\exp(\zeta \check{\omega}_n)}\sqcup \orb_{\sigma_n u_m}
$$
and $Z(G)\overline{J(\tau_n)}^{bir} =\overline{J(\tau_n)}^{bir} \sqcup\hat z \overline{J(\tau_n)}^{bir}$. Also
$
S_n=\overline{J(\tau_n)}^{bir}
\sqcup Z_m
$.
 \end{proof}

We consider the remaining spherical pseudo-Levi subgroups:
\begin{enumerate}[label=(\roman*)]
\item For $\ell = 2, \dots, n$,  the subgroup $M_\ell$ is maximal of type ${\sf D}_\ell {\sf B}_{n-\ell}$.
If $\ell$ is even we have $\sigma_\ell^2=1$ and  $Z(L)=\langle\sigma_\ell\rangle\times Z(G)$;
if $\ell$ is odd we have $\sigma_\ell^2=\hat z$ and 
$Z(L)=\langle\sigma_\ell\rangle$.
In any case $\sigma_\ell$ and $\hat z\sigma_\ell$ are $G$-conjugate (via the reflection $s_{e_1}$).
Then $\orb_{\sigma_\ell}$
is a (birational) sheet consisting of an isolated class, and $Z(G)\orb_{\sigma_\ell}=\orb_{\sigma_\ell}$.
\item For $\ell = n$, we get $M_n$ maximal of type ${\sf D}_n$. Then $M_n$ admits the birationally rigid unipotent class  $\orb_{u_k}^{M_n}$, 
 corresponding  to the partition $[2^{2k}, 1^{2(n-2k)}]$ in ${\rm SO}_{2n}(\C)$, for $k = 1, \dots, m -1$.
Since $\sigma_n \sim_W \hat z\sigma_n$ via $s_n$ and $\orb_{u_k}^{M_n}$ is characteristic in $M_n$, we have that $Z(G)\orb_{\sigma_\ell u_k}=\orb_{\sigma_\ell u_k}$
is a (birational) sheet consisting of an isolated class, for $k = 1, \dots, m-1$.
\end{enumerate}

Up to central elements, the remaining spherical conjugacy classes in $G$ are $X_\ell$ corresponding to the partition $[2^{2 \ell}, 1^{2n + 1 - 4 \ell}]$, for $\ell=1,\ldots,m$ and $Z_\ell$ corresponding to the partition $[3,2^{2( \ell - 1)}, 1^{2n + 2 - 4 \ell}]$, for $\ell=2,\ldots,m$: these are all birationally rigid unipotent conjugacy classes in $G$.

\begin{table}[H] 
\centering
{\renewcommand{\arraystretch}{1.5}
\begin{tabular}{|c|c|c|c|}
\hline
$\tau$                       & $\overline{J(\tau)}^{bir}$               & $\Lambda(\orb)$ &$d$ \\ \hline
$ (L_1, Z(L_1), \{1\})$ &
 $\bigcup\limits_{\zeta \in \C \setminus 2 \pi i \Z} \orb_{\exp(\zeta \check{\omega}_1)}  \sqcup Z_1\sqcup\hat z  Z_1$ & $2 n_1 \omega_1 +  n_2 \omega_2$ 
& $1$  \\ \hline
$ (L_n, Z(L_n)^\circ, \{1\})$ & 
$\bigcup\limits_{\zeta \in \C \setminus \pi i \Z} \orb_{\exp(\zeta \check{\omega}_n)} \sqcup \orb_{\sigma_n u_m}$ &
$\sum\limits_{i=1}^{n-1} n_i\omega_i + 2n_n\omega_n $ &$2$        \\ \hline
\makecell{$(M_\ell, \{\sigma_\ell\}, \{1\})$\\$ \ell = 2, \dots, m-1$ }&
$\orb_{\sigma_\ell}$ &
$\sum\limits_{i = 1}^{2 \ell -1} 2 n_i \omega_i + n_{2\ell} \omega_{2\ell}$&$1$  \\ \hline
\makecell{$(M_\ell, \{\sigma_\ell\}, \{1\})$\\$ \ell = m+1, \dots, n$} &
$\orb_{\sigma_\ell}$ &
$\sum\limits_{i = 1}^{2(n - \ell)} 2 n_i \omega_i + n_{2(n -\ell) + 1} \omega_{2(n -\ell) + 1}$&$1$  \\ \hline
$(M_m, \{\sigma_m\}, \{1\})$ &
$\orb_{\sigma_m}$ &
$\sum\limits_{i = 1}^n 2 n_i \omega_i$&$1$  \\ \hline
\makecell{$ (M_n, \{\sigma_n\}, \orb^{M_n}_{u_\ell})$\\ $\ell = 1, \dots, m-1$}&
$\orb_{\sigma_n u_\ell}$ &
$\sum\limits_{i = 1}^{2 \ell + 1} n_i \omega_i$&$1$  \\ \hline
\makecell{$(G,\{1\}, \orb_{[2^{2 \ell}, 1^{2n + 1 - 4 \ell}]})$\\$ \ell = 1, \dots, m-1$}&
$X_\ell$ &
$\sum\limits_{i = 1}^{ \ell } n_{2i} \omega_{2i}$&$2$  \\ \hline
$(G,\{1\}, \orb_{[2^{2m}, 1]})$&
$X_m$ &
$\sum\limits_{i = 1}^{m-1} n_{2i} \omega_{2i} + 2 n_n \omega_n$&$2$  \\ \hline
\makecell{$(G,\{1\}, \orb_{[3,2^{2( \ell - 1)}, 1^{2n + 2 - 4 \ell}]})$\\ $\ell = 2, \dots, m-1$}&
$Z_\ell$ &
$\sum\limits_{i = 1}^{ 2\ell } n_i \omega_{i} \mid \sum\limits_{i = 1}^{ \ell }  n_{2i-1} \in 2\N$ &$2$ \\ \hline
$(G,\{1\}, \orb_{[3,2^{2( m- 1)}, 1^2]})$  &
$Z_m$ &
$\sum\limits_{i = 1}^{ n } n_i \omega_{i} \mid \sum\limits_{i = 1}^{ m }  n_{2i-1}, n_n \in 2\N$ &$2$ \\ 
\hline
\end{tabular}
}
\caption{Type $\mathsf{B}_{n}$, $n=2m$, $m \geq 2$.}
\label{tab_bneven}
\end{table}

\begin{prop}
Theorem \ref{thm_principal} holds for $G$ of type $\mathsf{B}_{2m}$, for $m \geq 2$.	
\end{prop}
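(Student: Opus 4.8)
The plan is to follow the same two-step strategy used for the preceding types. The first assertion of Theorem~\ref{thm_principal}, that the spherical birational sheets partition $G_{sph}$, is immediate: the birational sheets partition $G$ and sphericity is preserved along (birational) sheets, so the spherical ones exhaust exactly $G_{sph}$. For the equivalence, one direction is the group analogue of Proposition~\ref{prop_losevconj} and the other is the group analogue of Conjecture~\ref{conj}; these correspond, respectively, to the \emph{constancy} and to the \emph{pairwise distinctness} of the weight monoids recorded in the third column of Table~\ref{tab_bneven}.

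For the forward direction I would read off from \cite[Table 13, 14, 15]{Costantini2010} that, for each $\tau$ in the first column of Table~\ref{tab_bneven}, the monoid $\Lambda(\orb)$ is the same for every spherical class $\orb \subset \overline{J(\tau)}^{bir}$. By Remark~\ref{rk_uptocentre} a translation by $z \in Z(G)$ leaves the $G$-module structure unchanged, so $\Lambda$ is in fact constant on all of $Z(G)\overline{J(\tau)}^{bir}$. Since a spherical class is determined as a $G$-module by its weight monoid, this already gives $\C[\orb_1] \simeq_G \C[\orb_2]$ whenever $\overline{J(\tau_2)}^{bir} = z\overline{J(\tau_1)}^{bir}$ for some $z \in Z(G)$.

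The substantive step is the converse: I must verify that the monoids in the third column of Table~\ref{tab_bneven} are pairwise distinct, so that $\Lambda(\orb_1) = \Lambda(\orb_2)$ forces $\orb_1,\orb_2$ into a common $Z(G)$-orbit of birational sheets. I would organise the check by first comparing \emph{supports} (the sets of indices $i$ with $\omega_i \in \Lambda(\orb)$). Within each family indexed by $\ell$ the support grows strictly with $\ell$, which separates the entries inside one family; for rows in distinct families sharing a support, I would test membership of the low weights $\omega_1$ and $\omega_2$. Thus $\omega_1$ separates the monoid generated by $\{\omega_1,\dots,\omega_{n-1},2\omega_n\}$ (the $(L_n,\dots)$ row) from the monoid generated by $\{2\omega_1,\dots,2\omega_n\}$ (the $(M_m,\dots)$ row), both of full support, and $\omega_2$ then separates the latter from the full-support congruence monoid of $Z_m$.

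The main obstacle will be the two rows whose monoids are cut out by $2\N$-congruences, namely $Z_\ell$ (for $\ell=2,\dots,m-1$) and $Z_m$: each shares its support with a monoid generated by doubled fundamental weights—$Z_\ell$ with $(M_\ell,\{\sigma_\ell\},\{1\})$, and $Z_m$ with both $(M_m,\{\sigma_m\},\{1\})$ and $(L_n,\dots)$—so the congruence conditions, and not merely the supports, must be invoked, typically through the presence of $\omega_2 \in \Lambda(Z_\ell) \cap \Lambda(Z_m)$ against its absence from the doubled-weight monoids. Unlike the case of type $\mathsf{C}_{2p}$ treated above, I expect no genuine coincidence of monoids to arise here, so no appeal to a $W$-conjugacy $\sigma_p \sim \hat z\sigma_p$ is required to collapse a spurious repetition; the distinctness should be purely combinatorial, and once it is confirmed the statement follows for type $\mathsf{B}_{2m}$.
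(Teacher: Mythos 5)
Your proposal is correct and takes essentially the same approach as the paper: the paper's proof of this proposition consists precisely of (a) the constancy of the weight monoid along each $Z(G)\overline{J(\tau)}^{bir}$, read off from Costantini's tables, and (b) the pairwise distinctness of the entries in the third column of Table~\ref{tab_bneven}, which your support-plus-parity analysis verifies correctly (note only that it is $\omega_1$, excluded from $\Lambda(Z_m)$ by the congruence on odd-indexed coefficients, rather than $\omega_2$, that separates $Z_m$ from the $(L_n, Z(L_n)^\circ, \{1\})$ row). One small slip: for type $\mathsf{B}_{2m}$ the relevant tables in \cite{Costantini2010} are Tables 10, 11, 12 --- Tables 13, 14, 15 are the ones the paper uses for type $\mathsf{B}_{2m+1}$.
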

\begin{proof}
From \cite[Table 10, 11, 12]{Costantini2010}
the weight monoid is preserved along the classes in each $Z(G)\overline{J(\tau)}^{bir}$. The entries in the third column of Table \ref{tab_bneven} are pairwise distinct.
\end{proof}

\subsection{Type $\mathsf{D}_n, n \geq 4$}
The highest root is $\beta=\alpha_1+2\alpha_2+\cdots+2\alpha_{n-2}+\alpha_{n-1}+\alpha_n$.
We fix the following notation:
$$
\hat z_1 =\sigma_1 =\alpha^\vee_{n-1}(-1) \alpha^\vee_{n}(-1),\quad
\hat z_{n-1} =\sigma_{n-1},\quad
\hat z_n =\sigma_n.
$$
Then:
\begin{itemize}[label=-]
\item if $n=2m$ is even, $\hat z_{n-1}$ and $\hat z_n$ are involutions and $\hat z_n\hat z_{n-1}=\hat z_1$; in particular, $$\prod\limits_{j=0}^{m-1} \alpha^\vee_{2j +1}(-1) = \begin{cases}\hat{z}_n & m \mbox{ even}\\
\hat{z}_{n-1} & m \mbox{ odd} \end{cases},$$
hence $Z(G) =\langle \hat{z}_1, \hat{z}_n \rangle = \langle \hat{z}_1, \hat{z}_{n-1} \rangle \simeq \Z_2 \times \Z_2$.
\item If $n=2m+1$ is odd, $\hat z_n=\hat z_{n-1}^{-1}$
has order 4 and $\hat z_n^2=\hat z_1$, hence $Z(G) =\langle \hat{z}_n \rangle = \langle \hat{z}_{n-1} \rangle \simeq \Z_4$.
\end{itemize}

The following result holds for any $n \geq 4$:

\begin{lem}
Let $\tau_1 := (L_1, Z(L_1)^\circ, \{1\}) \in \mathscr{D}(G)$. Then the sheet $S_1 := \overline{J(\tau_1)}^{reg} = \overline{J(\tau_1)}^{bir}$ is a spherical birational sheet containing the unipotent class $\orb_{\mathbf{d}}$, with $\mathbf{d} = [3,1^{2n-3}]$.
\end{lem}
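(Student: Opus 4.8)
The plan is to mirror the treatment of the $\mathsf{B}_n$ and $\mathsf{C}_n$ cases: write the sheet $S_1$ explicitly as a union of conjugacy classes, exhibit the unique nilpotent datum occurring in it, and show that the associated induction is birational, so that the birational closure fills up the whole sheet. Since $c_1 = 1$, the subgroup $L_1 = L_{\Theta_1}$ is a maximal connected reductive subgroup of type ${\sf T}_1{\sf D}_{n-1}$, with $Z(L_1)^\circ = \exp(\C\check\omega_1)$ and $\exp(2\pi i\check\omega_1) = \hat z_1$ an involution. By maximality of $L_1$ and connectedness of centralisers, for $z \in Z(L_1)^\circ$ one has $C_G(z) = L_1$ unless $z \in Z(G)\cap Z(L_1)^\circ = \{1, \hat z_1\}$, where $C_G(z) = G$. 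Substituting this into $S_1 = \bigcup_{z\in Z(L_1)^\circ} G\cdot(z\,\ind_{L_1}^{C_G(z)}\{1\})$ produces
$$S_1 = \bigcup_{\zeta\in\C\setminus 2\pi i\Z}\orb_{\exp(\zeta\check\omega_1)}\;\sqcup\;\orb_{\mathbf{d}}\;\sqcup\;\hat z_1\orb_{\mathbf{d}},\qquad \orb_{\mathbf{d}} := \ind_{L_1}^G\{1\},$$
so the only thing to verify is that $\orb_{\mathbf{d}}$ is birationally induced from $(L_1,\{1\})$; the datum at $z = \hat z_1$ is the $\hat z_1$-translate and follows at once.

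To carry out this verification I would realise $G$ as $\SO_{2n}(\C)$ acting on $\C^{2n}$, so that $P_{\Theta_1}$ is the stabiliser of the isotropic highest-weight line $\langle v_1\rangle$ and $\dim\mathfrak{n} = 2(n-1)$, giving $\dim\orb_{\mathbf{d}} = 2\dim\mathfrak{n} = 4n-4 = \dim\orb_{[3,1^{2n-3}]}$; as $[3,1^{2n-3}]$ is not very even it is a single orbit, which I would pin down by producing a representative in the nilradical. Concretely, I take $u = x_{\alpha_1}(1)\,x_\beta(1) \in U$, where $\alpha_1 = e_1-e_2$ and the highest root $\beta = e_1+e_2$ both involve $e_1$; under a Springer isomorphism $u$ corresponds to $X = e_{\alpha_1}+e_\beta$, whose Jordan type on $\C^{2n}$ is $[3,1^{2n-3}]$, so $\orb_u = \orb_{\mathbf{d}}$. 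The decisive point is that $\image(X^2) = \langle v_1\rangle$: since every $g \in C_G(u) = C_G(X)$ commutes with $X$ as an operator, it preserves $\image(X^2) = \langle v_1\rangle$, whence $C_G(u) \leq P_{\Theta_1} = P$ and $C_G(u) = C_P(u)$. Lemma \ref{lem_centrinpar} then gives that $\gamma$ is birational.

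It remains to assemble the conclusion: birational induction at $z = 1$ (and hence, after translation, at $z = \hat z_1$), combined with the trivial induction $\ind_{L_1}^{L_1}\{1\} = \{1\}$ at regular $z$, shows $\bir(Z(L_1)^\circ,\{1\}) = Z(L_1)^\circ$, so $\overline{J(\tau_1)}^{bir} = \overline{J(\tau_1)}^{reg} = S_1$. Sphericity is automatic, since $L_1$ is a spherical pseudo-Levi and thus the generic classes $\orb_{\exp(\zeta\check\omega_1)}\cong G/L_1$ are spherical; hence $S_1$ is a spherical birational sheet containing $\orb_{\mathbf{d}} = \orb_{[3,1^{2n-3}]}$. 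I expect the birationality step to be the crux: the work lies in choosing an element of $\orb_{\mathbf{d}}\cap U$ whose full centraliser is visibly contained in $P$, and the isotropic-line trick — that $C_G(X)$ must fix $\image(X^2)$, which is exactly the line defining $P_{\Theta_1}$ — is what makes this transparent, whereas in $\mathsf{B}_n$ a single short-root vector already did the job.
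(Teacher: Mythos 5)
Your proof is correct, and it reduces the lemma to the same two points as the paper does: the explicit decomposition $S_1 = \bigcup_{\zeta \in \C \setminus 2\pi i \Z} \orb_{\exp(\zeta \check{\omega}_1)} \sqcup \orb_{\mathbf{d}} \sqcup \hat z_1 \orb_{\mathbf{d}}$ (your computation of $Z(G)\cap Z(L_1)^\circ = \{1,\hat z_1\}$ and of the regular locus of $Z(L_1)^\circ$ agrees with the paper's), and the birationality of the induction $\ind_{L_1}^G\{1\}$. Where you genuinely diverge is in certifying this last point: the paper invokes Lemma \ref{lem_compgp}, citing \cite[p.~399]{Carter2} for the connectedness of $C_{\overline{G}}(\pi(u))$, whereas you verify condition (iii) of Lemma \ref{lem_centrinpar} directly, producing $X = e_{\alpha_1} + e_\beta$ in the nilradical of $\mathfrak{p}_{\Theta_1}$, checking its Jordan type $[3,1^{2n-3}]$ and orbit dimension $4n-4 = 2\dim\mathfrak{n}$ (so $\orb_u$ is the Richardson class, unique since the partition is not very even), and using $\image(X^2) = \langle v_1\rangle$ to force $C_G(u) \leq P_{\Theta_1}$. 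This is precisely the analogue of the paper's own arguments in types $\mathsf{B}$ and $\mathsf{C}$ (there $u = x_{\gamma_1}(1)$ with $\gamma_1$ the highest short root, resp. $u = x_{\beta_1}(1)x_{\beta_2}(1)$, does the job), and your substitute $e_{\alpha_1}+e_\beta$ for the short root missing in type $\mathsf{D}$ is the right fix: it makes the argument self-contained and independent of Carter's component-group tables, at the cost of a page of linear algebra, while the paper's citation is shorter and is recycled verbatim in the other $\mathsf{D}$-type lemmas. One imprecision you should repair: since $G'$ is simply-connected here, you cannot literally ``realise $G$ as $\SO_{2n}(\C)$''; your computation takes place in $\SO_{2n}(\C)$, but it transfers, because if $\pi_0\colon G \to \SO_{2n}(\C)$ is the central isogeny then $\pi_0(C_G(u)) \subseteq C_{\SO_{2n}(\C)}(\pi_0(u))$, which your argument places inside the stabiliser of $\langle v_1 \rangle$, and the parabolic $P_{\Theta_1}$ of $G$ is the full preimage of its image, so $C_G(u) \leq P_{\Theta_1}$ as required. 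With that one-line remark added, your proof is complete, and the sphericity step (constancy of sphericity along sheets plus $L_1$ being a spherical subgroup) matches the paper's framework.
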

\begin{proof}
$L_1$ is maximal of type ${\sf T}_1 {\sf D}_{n-1}$ and $Z(L_1)^\circ=\exp(\C\check\omega_1)$, $Z(L_1)=Z(L_1)^\circ\sqcup Z(L_1)^\circ \hat z_n$.
We have
$$
S_1 = \bigcup_{z \in Z(L_1)^\circ} G \cdot (z \ind_{L_1}^{C_G(z)} \{1\}) = G \cdot ((Z(L_1)^\circ)^{reg}) \cup  \ind_{L_1}^{G} \{1\} \cup\hat z_1 \ind_{L_1}^{G} \{1\}.
$$
Then the only unipotent isolated class in $S_1$ is $\ind_{L_1}^{G} \{1\} = \orb_{\mathbf{d}}$, where $\mathbf{d} =[3,1^{2n-3}]$  in $\SO_{2n}(\C)$. 
We show that $ \ind_{L_1}^G\{1\}$ is birationally induced from $(L_1,\{1\})$. Let $u \in \ind_{L_1}^G \{1\}$, the  centralizer $C_{\overline G}(\pi(u))$ is connected by \cite[p. 399]{Carter2}, and the claim follows.
Moreover the class $\orb_{[3,1^{2n-3}]}$ is the class denoted by $Z_{1}$ in \cite{Costantini2010}. Therefore
$$
{S}_1 = \bigcup_{\zeta \in \C \setminus 2\pi i \Z} \orb_{\exp(\zeta \check{\omega}_1)}\sqcup Z_{1}\sqcup \hat z_1 Z_{1}
$$
and $Z(G)S_{1}=S_{1}\sqcup \hat z_n S_1$
\end{proof}

\subsubsection{Type $\mathsf{D}_{2m}$, $m \geq 2$}
Let $\vartheta$ denote the graph automorphism of $G$ swapping $\alpha_{n-1}$ and $\alpha_{n}$.

\begin{lem}
The following spherical sheets of $G$ are spherical birational sheets.
\begin{enumerate}
\item[(i)]$S_n := \overline{J(\tau_n)}^{reg} = \overline{J(\tau_n)}^{bir}$, where $\tau_n := (L_n, Z(L_n)^\circ, \{1\})$;
\item[(ii)] $S_{n-1} := \vartheta(S_n) = \overline{J(\tau_{n-1})}^{reg} = \overline{J(\tau_{n-1})}^{bir}$, where $\tau_{n-1} := (L_{n-1}, Z(L_{n-1})^\circ, \{1\})$,
\end{enumerate}
\end{lem}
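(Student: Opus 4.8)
The plan is to prove (i) by the same direct analysis of $S_n=\overline{J(\tau_n)}^{reg}$ used for the other Levi subgroups in this section, and then to deduce (ii) by transporting (i) through the graph automorphism $\vartheta$. Deleting $\alpha_n$ yields the maximal Levi $L_n$ of type $\mathsf{T}_1\mathsf{A}_{n-1}$, whose only birationally rigid unipotent class is $\{1\}$ by Remark \ref{rk_typea}; hence $\tau_n\in\mathscr{BB}(G)$ and $\overline{J(\tau_n)}^{bir}$ is indeed a birational sheet. It is spherical because the generic classes it contains are isomorphic to $G/L_n$, the symmetric space $\SO_{2n}(\C)/\GL_n(\C)$ of type DIII.

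First I would describe the relevant part of $Z(L_n)$. We have $Z(L_n)^\circ=\exp(\C\check\omega_n)$, and since $c_n=1$ the element $\hat z_n=\sigma_n=\exp(2\pi i\check\omega_n)$ is a central involution lying in $Z(L_n)^\circ$. For $z=\exp(\zeta\check\omega_n)$ the action of $z$ on a root space $\mathfrak{g}_\alpha$ is by $e^{\zeta c}$, where $c\in\{0,\pm1\}$ is the coefficient of $\alpha_n$ in $\alpha$; thus $C_G(z)=L_n$ whenever $e^{\zeta}\neq1$, while the only non-regular points of $Z(L_n)^\circ$ are $z=1$ and $z=\hat z_n$, both with $C_G(z)=G$. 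Consequently
$$
S_n=\bigcup_{z\in Z(L_n)^\circ}G\cdot\bigl(z\,\ind_{L_n}^{C_G(z)}\{1\}\bigr)=\bigcup_{\zeta\in\C\setminus 2\pi i\Z}\orb_{\exp(\zeta\check\omega_n)}\ \sqcup\ \ind_{L_n}^G\{1\}\ \sqcup\ \hat z_n\,\ind_{L_n}^G\{1\},
$$
so the only isolated classes are $\ind_{L_n}^G\{1\}$ and its central translate; here $\ind_{L_n}^G\{1\}$ is the very even unipotent class of partition $[2^n]$ attached to the Siegel parabolic with Levi $L_n$. This mirrors exactly the earlier $L_1$ computation, with $[2^n]$ in place of $[3,1^{2n-3}]$.

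It remains only to show that $\ind_{L_n}^G\{1\}$ is birationally induced, and this single verification covers both isolated classes, since the condition at $z=\hat z_n$ is literally the same as at $z=1$. Choosing $u\in\ind_{L_n}^G\{1\}$, the centralizer $C_{\overline G}(\pi(u))$ in the adjoint group $\overline G=\PSO_{2n}(\C)$ is connected by \cite[p.~399]{Carter2}, so Lemma \ref{lem_compgp} shows that the generalized Springer map is birational. Hence $\bir(Z(L_n)^\circ,\{1\})=Z(L_n)^\circ$ and $S_n=\overline{J(\tau_n)}^{bir}$, which is (i).

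For (ii), $\vartheta$ interchanges $\alpha_{n-1}$ and $\alpha_n$, hence $\check\omega_{n-1}$ and $\check\omega_n$, so $\vartheta(L_n)=L_{n-1}$, $\vartheta(Z(L_n)^\circ)=Z(L_{n-1})^\circ$ and $\vartheta(\tau_n)=\tau_{n-1}$. As an automorphism of $G$, $\vartheta$ sends induced classes to induced classes and preserves birationality of the generalized Springer map, so applying it to (i) gives $S_{n-1}=\vartheta(S_n)=\overline{J(\tau_{n-1})}^{bir}$. The main obstacle is the very even phenomenon: I must confirm that $\vartheta$ genuinely swaps the two very even orbits $[2^n]'$ and $[2^n]''$ (so that $\vartheta(S_n)=S_{n-1}\neq S_n$), and that the connectedness of $C_{\overline G}(\pi(u))$ invoked above really does hold for the very even class in $\PSO_{2n}(\C)$ — this is precisely where the component groups of centralizers in adjoint $\mathsf{D}$-type groups are most delicate, and where the appeal to \cite[p.~399]{Carter2} must be made with care.
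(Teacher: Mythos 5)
Your argument is correct and coincides with the paper's proof in all essentials: the same decomposition $S_n=\bigcup_{\zeta\in\C\setminus 2\pi i\Z}\orb_{\exp(\zeta\check\omega_n)}\sqcup X_m\sqcup \hat z_n X_m$ (noting $\hat z_n\in Z(L_n)^\circ$ while $\hat z_1\notin Z(L_n)^\circ$), the same verification of birational induction via the connectedness of $C_{\overline G}(\pi(u))$ from \cite[p.~399]{Carter2} combined with Lemma \ref{lem_compgp}, and the same transport of (i) through $\vartheta$ to obtain (ii). The two caveats you flag at the end do hold and need no extra care: $\vartheta$ swaps the two very even classes with partition $[2^n]$ because it swaps $L_{n-1}$ and $L_n$ and hence their Richardson classes (consistent with the remark following \S\ref{sss_bruc}), and a very even partition has no odd parts, so the reductive part of the centralizer is a symplectic group and the component group in the adjoint group is trivial, exactly as recorded in Carter's tables.
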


\begin{proof}
Consider $L_n$  of type ${\sf T}_1 {\sf A}_{n-1}$ and maximal; moreover, $Z(L_n)^\circ=\exp(\C\check\omega_n)$ and $Z(L_n)=Z(L_n)^\circ\sqcup Z(L_n)^\circ \hat z_1$. We have
$$
S_n = \bigcup_{z \in Z(L_{n})^\circ} G \cdot (z \ind_{L_n}^{C_G(z)} \{1\}) = G \cdot ((Z(L_n)^\circ)^{reg}) \cup \ind_{L_n}^G \{1\} \cup\hat z_n \ind_{L_n}^G \{1\}.
$$
Let $\orb = \ind_{L_n}^G \{1\}$, then $\orb$ is one of the two unipotent classes corresponding to the very even partition $[2^n]$, the one denoted by $X_m$ in \cite{Costantini2010}.
We show that $ \ind_{L_n}^G\{1\}$ is birationally induced from $(L_n,\{1\})$.
Let $u \in \ind_{L_n}^G \{1\}$, then
 $C_{\overline G}(\pi(u))$ is connected by \cite[p. 399]{Carter2}, and the claim follows. 

Therefore
$$
{S}_n = \bigcup_{\zeta \in \C \setminus 2\pi i \Z} \orb_{\exp(\zeta \check{\omega}_n)}\sqcup X_{m}\sqcup \hat z_n X_{m}
$$
is a spherical birational sheet
and $Z(G)S_{n}=S_{n}\sqcup \hat z_1 S_n$.
By applying the automorphism $\vartheta$ we get
$$
{S}_{n-1} = \bigcup_{\zeta \in \C \setminus 2\pi i \Z} \orb_{\exp(\zeta \check{\omega}_{n-1})}\sqcup X_{m}'\sqcup \hat z_{n-1} X_{m}'
$$
and $Z(G)S_{n-1}=S_{n-1}\sqcup \hat z_1 S_{n-1}$.
\end{proof}

We consider the remaining spherical pseudo-Levi subgroups.
For $\ell = 2, \dots, m$, 
the subgroup $M_\ell$ is maximal of type ${\sf D}_\ell {\sf D}_{n-\ell}$.
If $\ell$ is even we have $\sigma_\ell^2=1$ and 
$Z(M_\ell)=\langle\sigma_\ell\rangle\times Z(G)$;
if $\ell$ is odd we have $\sigma_\ell^2=\hat z_1$ and 
$Z(M_\ell)=\langle\sigma_\ell\rangle\times \langle z_n \rangle$. 
For $\ell = 2, \dots, m-1$,  $\sigma_\ell$ 
is not $G$-conjugate to $\hat z_n\sigma_\ell$ and $\hat z_{n-1}\sigma_\ell$, as one can see by passing to ${\rm SO}_{2n}(\C)$.
On the other hand, for $\ell = 2, \dots, m$, $\omega_\ell$ is $W$-conjugate to $\omega_\ell-2\omega_1$, therefore 
$\sigma_\ell$ 
is $G$-conjugate to $\hat z_1\sigma_\ell$.
Moreover, $\omega_m$ is $W$-conjugate to $\omega_m-2\omega_1$, $\omega_m-2\omega_{n-1}$, $\omega_m-2\omega_n$, hence
$\sigma_m$ is $G$-conjugate to $\hat z_1\sigma_m$, $\hat z_{n-1}\sigma_m$ and $\hat z_n\sigma_m$.
Then $\orb_{\sigma_\ell}$
is a (birational) sheet consisting of an isolated class, and $Z(G)\orb_{\sigma_\ell} = \orb_{\sigma_\ell} \sqcup \hat z_n \orb_{\sigma_\ell}$ for $\ell=2,\ldots,m-1$, whereas $Z(G) \orb_{\sigma_m}= \orb_{\sigma_m}$.

Up to central elements, the remaining spherical conjugacy classes in $G$ are $X_\ell$ corresponding to the partition $[2^{2\ell}, 1^{2n - 4\ell}]$, for $\ell=1,\ldots,m-1$ and $Z_\ell$ corresponding to the partition $[3, 2^{2(\ell-1)}, 1^{2n - 4\ell +1}]$, for $\ell=2,\ldots,m$: these are all birationally rigid unipotent conjugacy classes in $G$. 

\begin{table}[H] 
\centering
{\renewcommand{\arraystretch}{1.5}
\begin{tabular}{|c|c|c|c|}
\hline
$\tau$                       & $\overline{J(\tau)}^{bir}$   &               $\Lambda(\orb)$ & $d$ \\ \hline
$ (L_1, Z(L_1)^\circ, \{1\})$      & 
$\bigcup\limits_{\zeta \in \C \setminus 2\pi i \Z} \orb_{\exp(\zeta \check{\omega}_1)}\sqcup Z_{1}\sqcup \hat z_1 Z_{1}$
  & $2 n_1 \omega_1 +  n_2 \omega_2$&2 \\ \hline
$ (L_n, Z(L_n)^\circ, \{1\})$ &
$\bigcup\limits_{\zeta \in \C \setminus 2\pi i \Z} \orb_{\exp(\zeta \check{\omega}_n)}\sqcup X_{m}\sqcup \hat z_n X_{m}$
& $\sum\limits_{i=1}^{m-1} n_{2i}\omega_{2i} +  2n_n \omega_n$ &2\\ \hline
$ (L_{n-1}, Z(L_{n-1})^\circ, \{1\})$ &
$\bigcup\limits_{\zeta \in \C \setminus 2\pi i \Z} \orb_{\exp(\zeta \check{\omega}_{n-1})}\sqcup X_{m}'\sqcup \hat z_{n-1} X_{m}'$
& $\sum\limits_{i=1}^{m-1} n_{2i} \omega_{2i} +  2n_{n-1} \omega_{n-1}$ &2\\ \hline
\makecell{$ (M_\ell, \{\sigma_{\ell}\}, \{1\})$\\ $\ell = 2, \dots, m-1$ }& 
$\orb_{\sigma_\ell}$ &    
$\sum\limits_{i=1}^{2\ell -1} 2n_i \omega_{i} + n_{2\ell} \omega_{2\ell}$ &2 \\ \hline
$  (M_m, \{\sigma_{m}\}, \{1\})$ & 
$\orb_{\sigma_m}$ &  
$\sum\limits_{i=1}^{n} 2n_i \omega_{i}$  &1  \\ \hline
\makecell{$(G, \{1\}, \orb_{[2^{2\ell}, 1^{2n - 4\ell}]})$\\$  \ell=1,\ldots,m-1$ }& 
$X_\ell$ &        
$\sum\limits_{i=1}^{\ell} n_{2i}\omega_{2i}$ &4 \\ \hline
\makecell{$(G, \{1\}, \orb_{[3, 2^{2(\ell-1)}, 1^{2n - 4\ell +1}]})$\\ $\ell = 2, \dots, m-1 $} & $Z_\ell$ & $\sum\limits_{i=1}^{2\ell} n_i \omega_{i} \mid \sum\limits_{i=1}^{\ell} n_{2i -1} \in 2\N$ 
& 4  	\\ \hline
$(G, \{1\}, \orb_{[3, 2^{2(m-1)}, 1]})$& $Z_m$ & 
\begin{tabular}{c}
$\sum\limits_{i=1}^{n} n_i \omega_{i} \mid $\\
$\sum\limits_{i=1}^m n_{2i -1}, n_{n-1} + n_n \in 2\N$
\end{tabular}
& 4  	\\ \hline
  \end{tabular}
}
\caption{Type $\mathsf{D}_{n}, n=2m, m \geq 2$.} \label{tab_dneven}
\end{table}

\begin{prop}
Theorem \ref{thm_principal} holds for $G$ of type $\mathsf{D}_{2m}$, with $m \geq 2$.
\end{prop}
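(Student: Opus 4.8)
The plan is to follow the two-step scheme already used for the preceding types. By the lemmas of this subsection, Table \ref{tab_dneven} lists, up to translation by $Z(G)$, all the spherical birational sheets of $G$ of type $\mathsf{D}_{2m}$, so the proof reduces to two verifications concerning its third column. First, that the weight monoid $\Lambda(\orb)$ is the same for every spherical class $\orb$ contained in a fixed $Z(G)\overline{J(\tau)}^{bir}$; this is the group analogue of Proposition \ref{prop_losevconj}. Second, that the monoids attached to inequivalent families $Z(G)\overline{J(\tau)}^{bir}$ are pairwise distinct; this is the group analogue of Conjecture \ref{conj}. Together these give the equivalence asserted in Theorem \ref{thm_principal}.

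For the first verification I would read off, row by row, the weight monoids from the tables for type $\mathsf{D}$ in \cite{Costantini2010} and check that the listed monoid is attained by every class of $\overline{J(\tau)}^{bir}$ and is stable under the central translations recorded in the $d$-column. Constancy along the semisimple directions of each sheet is furnished by Proposition \ref{prop_iso_jc_scgp}, and invariance under central shifts by Remark \ref{rk_uptocentre}; the behaviour at the isolated and unipotent classes is covered by the case analysis already carried out for the individual sheets.

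The second verification is the combinatorial core and the only place where type $\mathsf{D}_{2m}$ departs substantively from the earlier cases, because here $Z(G)\simeq \Z_2\times\Z_2$ and the graph automorphism $\vartheta$ exchanges $\omega_{n-1}$ with $\omega_n$ and the two very even classes $X_m, X_m'$. The sheets $S_n\ni X_m$ and $S_{n-1}=\vartheta(S_n)\ni X_m'$ carry the distinct monoids $\sum_{i=1}^{m-1}n_{2i}\omega_{2i}+2n_n\omega_n$ and $\sum_{i=1}^{m-1}n_{2i}\omega_{2i}+2n_{n-1}\omega_{n-1}$; the point to record is that, although they are swapped by the outer automorphism $\vartheta$, they are \emph{not} central translates of one another, so their different monoids are exactly consistent with the theorem. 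One then runs through the remaining rows, separating the monoid $\sum_i 2n_i\omega_i$ of the sheet $\orb_{\sigma_m}$ and the monoid of $Z_m$ (with its congruence conditions $\sum_i n_{2i-1},\,n_{n-1}+n_n\in 2\N$) from one another and from the monoids of $S_n,S_{n-1}$ by their supports and coefficient patterns, and separating the families $X_\ell$ and $Z_\ell$ by their supports as $\ell$ varies.

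The main obstacle I anticipate is the book-keeping forced by the $\Z_2\times\Z_2$ centre together with the fact that $X_m, X_m'$ are not characteristic: here the number of birational sheets inside each $Z(G)\overline{J(\tau)}^{bir}$ cannot be read off from Remark \ref{rk_characteristic} directly, and one must instead use the explicit descriptions $Z(G)S_n=S_n\sqcup\hat z_1 S_n$ (and its $\vartheta$-image for $S_{n-1}$) proved above, together with the conjugacy relations showing that $\sigma_m$ is $G$-conjugate to $\hat z_1\sigma_m$, $\hat z_{n-1}\sigma_m$ and $\hat z_n\sigma_m$, whence $Z(G)\orb_{\sigma_m}=\orb_{\sigma_m}$. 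With these relations in place the distinctness of the entries of Table \ref{tab_dneven} is a finite inspection and the theorem follows.
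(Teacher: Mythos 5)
Your proposal is correct and follows essentially the same route as the paper: the paper's proof likewise consists of checking, from \cite[Table 6, 7]{Costantini2010}, that the weight monoid is constant on the classes in each $Z(G)\overline{J(\tau)}^{bir}$ of Table \ref{tab_dneven}, and that the third-column entries are pairwise distinct, with all the sheet-theoretic input (the descriptions of $S_1$, $S_n$, $S_{n-1}=\vartheta(S_n)$, the conjugacy relations for $\sigma_\ell$, and the identification of the birationally rigid classes $X_\ell$, $Z_\ell$) supplied by the preceding lemmas exactly as you use them; your observation that $S_n$ and $S_{n-1}$ are swapped by $\vartheta$ but are not central translates, consistently with their distinct monoids, is the right point to record. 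One inaccuracy: your claim that Remark \ref{rk_characteristic} cannot be applied in type $\mathsf{D}_{2m}$ because $X_m$, $X_m'$ fail to be characteristic misreads the remark, which concerns the birationally rigid class $\orb^M$ in the pseudo-Levi $M$ of the datum $\tau$ --- here $\{1\}$ in $L_n$, $L_{n-1}$, or, for $M=G$, classes whose partitions contain odd parts --- and all of these are characteristic, the paper's sole exception being type $\mathsf{C}_{2p}$ with $M$ of type $\mathsf{C}_p\mathsf{C}_p$ (Remark \ref{rk_notchar}); the very even classes enter only as induced classes inside the sheets, not as rigid data. Your substitute bookkeeping via the explicit decompositions $Z(G)S_n=S_n\sqcup \hat z_1 S_n$ and the relation $Z(G)\orb_{\sigma_m}=\orb_{\sigma_m}$ produces the same counts as the $d$-column, so this is a harmless detour rather than a gap.
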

\begin{proof}
From \cite[Table 6, 7]{Costantini2010}
the weight monoid is preserved along the classes in each $Z(G)\overline{J(\tau)}^{bir}$. The entries in the third column of Table \ref{tab_dneven} are pairwise distinct.
\end{proof}

\subsubsection{Type $\mathsf{D}_{2m+1}$, $m \geq 2$}

\begin{lem}
Let $ \tau_n = (L_n, Z(L_n), \{1\}) \in \mathscr{D}(G)$.
The spherical sheet $S_n:= \overline{J(\tau_n)}^{reg}$ is a birational spherical sheet, containing the unipotent class $\orb_{\mathbf{d}}$, with $\mathbf{d} = [2^{n-1},1^2]$.
\end{lem}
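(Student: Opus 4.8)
The plan is to follow the template of the $\mathsf{D}_{2m}$ case, the decisive difference being that for $n=2m+1$ the centre $Z(L_n)$ is \emph{connected}. First I would record the structure of $L_n$: deleting $\alpha_n$ from $\Delta$ yields a maximal Levi subgroup of type $\mathsf{T}_1\mathsf{A}_{n-1}$ with $Z(L_n)^\circ=\exp(\C\check\omega_n)$. Since $c_n=1$ we have $\sigma_n=\hat z_n=\exp(2\pi i\check\omega_n)$, and for $n$ odd $Z(G)=\langle\hat z_n\rangle\simeq\Z_4$ is cyclic generated by $\hat z_n\in Z(L_n)^\circ$; hence $Z(G)\subseteq Z(L_n)^\circ$, and as $L_n$ is a Levi subgroup, $Z(L_n)=Z(G)Z(L_n)^\circ=Z(L_n)^\circ$ is connected by Remark \ref{rk_itsalevi}. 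This is what justifies writing $\tau_n=(L_n,Z(L_n),\{1\})$ with the full centre as a single connected component.

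Next I would unwind the sheet. By the formula for the regular closure,
\[
S_n=\bigcup_{z\in Z(L_n)}G\cdot\bigl(z\,\ind_{L_n}^{C_G(z)}\{1\}\bigr).
\]
Because $\alpha_n$ occurs with coefficient at most $1$ in every positive root, $\exp(\zeta\check\omega_n)$ is central exactly when $\zeta\in2\pi i\Z$ and otherwise has centralizer precisely $L_n$; as $L_n$ is maximal, no intermediate centralizer can occur. Thus the regular locus contributes the semisimple classes $\orb_{\exp(\zeta\check\omega_n)}$ for $\zeta\notin2\pi i\Z$, while each central point $z\in Z(G)$ contributes $z\,\ind_{L_n}^G\{1\}$. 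Identifying $\ind_{L_n}^G\{1\}$ with the Richardson orbit $\orb_{\mathbf d}$, $\mathbf d=[2^{n-1},1^2]$, via \cite[\S 7]{CollMcGov}, yields
\[
S_n=\bigcup_{\zeta\in\C\setminus2\pi i\Z}\orb_{\exp(\zeta\check\omega_n)}\;\sqcup\;\bigsqcup_{z\in Z(G)}z\,\orb_{\mathbf d}.
\]

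The crux is to prove that $\orb_{\mathbf d}$ is birationally induced from $(L_n,\{1\})$: by Remark \ref{rk_uptocentre} this forces every central translate to be birationally induced as well, so that $\bir(Z(L_n),\{1\})=Z(L_n)$ and therefore $S_n=\overline{J(\tau_n)}^{bir}$, with $Z(G)S_n=S_n$ since all central points already lie in $Z(L_n)^\circ$. Here $\mathbf d=[2^{n-1},1^2]$ is exactly the exceptional full-member partition singled out in \S\ref{sss_bruc}: the general criterion would predict birational rigidity, so the content of the lemma is precisely that this orbit \emph{escapes} rigidity and is Richardson. I would obtain this through Lemma \ref{lem_compgp}: for $u\in\orb_{\mathbf d}$ one checks that $C_{\overline G}(\pi(u))$ is connected in the adjoint group $\overline G=\PSO_{2n}(\C)$, reading the relevant component group off \cite[p.~399]{Carter2}; together with $\orb_{\mathbf d}=\ind_{L_n}^G\{1\}$, Lemma \ref{lem_compgp} delivers birational induction. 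I expect this connectedness check to be the main obstacle, since component groups of unipotent centralizers in even orthogonal (and adjoint) groups are delicate, and it is exactly the connected side of the \cite{Carter2} classification, rather than the full-member count, that must be confirmed.
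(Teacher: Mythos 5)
Your proposal is correct and follows essentially the same route as the paper: the same decomposition of $S_n = \bigcup_{z \in Z(L_n)} G\cdot(z\,\ind_{L_n}^{C_G(z)}\{1\})$ using the connectedness of $Z(L_n)=\exp(\C\check\omega_n)$, the same identification of $\ind_{L_n}^G\{1\}$ with the Richardson class $\orb_{[2^{n-1},1^2]}$ ($=X_m$), and the same key step via Lemma \ref{lem_compgp} by checking that $C_{\overline G}(\pi(u))$ is connected using \cite[p.~399]{Carter2}. One cosmetic remark: your appeal to Remark \ref{rk_uptocentre} for the central translates is unnecessary, since for $z\in Z(G)$ one has $C_G(z)=G$, so membership of $z$ in $\bir(Z(L_n),\{1\})$ is literally the same condition on $\ind_{L_n}^G\{1\}$ already verified.
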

\begin{proof}
 Consider $L_n$: it is maximal of type ${\sf T}_1 {\sf A}_{n-1}$ and $Z(L_n)=\exp(\C\check\omega_n)$ is connected.  We have:
\begin{align*} 
S_n &= \bigcup_{z \in Z(L_{n})} G \cdot (z \ind_{L_n}^{C_G(z)} \{1\})=\\
&=G \cdot (Z(L_n)^{reg}) \cup \ind_{L_n}^G \{1\} \cup\hat z_1 \ind_{L_n}^G \{1\}\cup\hat z_{n-1} \ind_{L_n}^G \{1\}\cup\hat z_n \ind_{L_n}^G \{1\}. \end{align*}
Let $\orb = \ind_{L_n}^G \{1\}$, then $\orb$ is the unipotent class corresponding to the partition $[2^{n-1},1^2]$ in $\SO_{2n}(\C)$, the unipotent class denoted by $X_m$ in \cite{Costantini2010}.
We show that $ \ind_{L_n}^G\{1\}$ is birationally induced from $(L_n,\{1\})$. Let $u \in \ind_{L_n}^G \{1\}$, then $C_{\overline G}(\pi(u))$ is connected by \cite[p. 399]{Carter2}, and the claim follows.
Therefore
$$
{S}_n = \bigcup_{\zeta \in \C \setminus 2\pi i \Z} \orb_{\exp(\zeta \check{\omega}_n)}\sqcup X_{m}\sqcup \hat z_1 X_{m}\sqcup\hat z_{n-1} X_{m}\sqcup\hat z_n X_{m}
$$
is a spherical birational sheet
and $Z(G)S_{n}=S_{n}$
\end{proof}

We consider the remaining spherical pseudo-Levi subgroups.
For $\ell = 2, \dots, m$,   $M_\ell$ is maximal of type ${\sf D}_\ell {\sf D}_{n-\ell}$. 
 If $\ell$ is even we have $\sigma_\ell^2=1$ and 
$Z(M_\ell)=\langle\sigma_\ell\rangle\times Z(G)\simeq \Z_4\times \Z_2$; if $\ell$ is odd we have $\sigma_\ell^2=\hat z_1$ and 
$Z(M_\ell)=\langle\sigma_\ell, z_n \rangle\simeq \Z_4\times \Z_2$. 
For $\ell = 2, \dots, m$,  $\sigma_\ell$ 
is not $G$-conjugate to $\hat z_n\sigma_\ell$ (and $\hat z_{n-1}\sigma_\ell$), as one can see by passing to ${\rm SO}_{2n}(\C)$.
On the other hand, for $\ell = 2, \dots, m$, $\omega_\ell$ is $W$-conjugate to $\omega_\ell-2\omega_1$ and therefore 
$\sigma_\ell$ 
is $G$-conjugate to $\hat z_1\sigma_\ell$. 
Then $\orb_{\sigma_\ell}$
is a (birational) sheet consisting of an isolated class, and $Z(G) \orb_{\sigma_\ell} = \orb_{\sigma_\ell} \cup \hat z_n \orb_{\sigma_\ell}$ for $\ell=2,\ldots,m$.

Up to central elements, the remaining spherical conjugacy classes in $G$ are $X_\ell$ corresponding to the partition $[2^{2 \ell}, 1^{2n - 4 \ell}]$, for $\ell=1,\ldots,m-1$ and $Z_\ell$ corresponding to the partition $[3,2^{2( \ell - 1)}, 1^{2n + 1 - 4 \ell}]$, for $\ell=2,\ldots,m$: these are all birationally rigid unipotent conjugacy classes in $G$. 

\begin{table}[H] 
\centering
{\renewcommand{\arraystretch}{1.5}
\begin{tabular}{|c|c|c|c|}
\hline
$\tau$                       &$\overline{J(\tau)}^{bir}$  &               $\Lambda(\orb)$ & $d$ \\ \hline
$ (L_1, Z(L_1)^\circ, \{1\})$      & 
$\bigcup\limits_{\zeta \in \C \setminus 2\pi i \Z} \orb_{\exp(\zeta \check{\omega}_1)}\sqcup Z_{1}\sqcup \hat z_1 Z_{1}$
  & $2 n_1\omega_1 +  n_2 \omega_2$&2 \\ \hline
$ (L_n, Z(L_n), \{1\})$ &
\begin{tabular}{c}
$\bigcup\limits_{\zeta \in \C \setminus 2\pi i \Z} \orb_{\exp(\zeta \check{\omega}_n)}\sqcup$\\
$\sqcup X_{m}\sqcup \hat z_1 X_{m}\sqcup\hat z_{n-1} X_{m}\sqcup\hat z_n X_{m}$
 \end{tabular}
& $\sum\limits_{i=1}^{m-1} n_{2i} \omega_{2i} +  n_{n-1}(\omega_{n-1} + \omega_n)$ &1\\ \hline
\makecell{$ (M_\ell, \{\sigma_{\ell}\}, \{1\})$\\$\ell = 2, \dots, m-1$} & $\orb_{\sigma_\ell}$ &    
$\sum\limits_{i=1}^{2\ell -1} 2n_i \omega_{i} + n_{2\ell} \omega_{2 \ell}$ &2 \\ \hline
$  (M_m, \{\sigma_{m}\}, \{1\}$ & 
$\orb_{\sigma_m}$ &  
$\sum\limits_{i=1}^{n-2} 2n_i \omega_{i} + n_{n-1}(\omega_{n-1} + \omega_n)$  &2  \\ \hline
\makecell{$(G, \{1\}, \orb_{[2^{2\ell}, 1^{2n - 4\ell}]})$\\$  \ell=1,\ldots,m-1$ }& 
$X_\ell$ &        
$\sum\limits_{i=1}^{\ell} n_{2i}\omega_{2i}$ &4 \\ \hline
\makecell{$(G, \{1\}, \orb_{[3, 2^{2(\ell-1)}, 1^{2n - 4\ell +1}]})$\\ $\ell = 2, \dots, m-1 $ }& $Z_\ell$ & $\sum\limits_{i=1}^{2\ell} n_i \omega_{i} \mid \sum\limits_{i=1}^{\ell} n_{2i -1} \in 2\N$ 
& 4  	\\ \hline
$(G, \{1\}, \orb_{[3, 2^{2(m-1)}, 1^3]})$& $Z_m$ & 
\begin{tabular}{c}
$\sum\limits_{i=1}^{n-2} n_i \omega_{i} + n_{n-1}(\omega_{n-1} + \omega_n) \mid $\\
$\sum\limits_{i=1}^{m} n_{2i -1} \in 2\N$
\end{tabular}
& 4  	\\ \hline
  \end{tabular}
  }
\caption{Type $\mathsf{D}_{n}, n=2m+1, m \geq 2$.}    \label{tab_dnodd}
\end{table}

\begin{prop} Theorem \ref{thm_principal} holds for $G$ of type $\mathsf{D}_{2m+1}$, with $m \geq 2$.
\end{prop}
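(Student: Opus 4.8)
The plan is to follow the template already used for types $\mathsf{B}_{2m+1}$ and $\mathsf{D}_{2m}$, since the structural work for $\mathsf{D}_{2m+1}$ has been carried out in the two preceding lemmas and in the analysis of the remaining spherical pseudo-Levi subgroups $M_\ell$ of type $\mathsf{D}_\ell\mathsf{D}_{n-\ell}$, culminating in Table \ref{tab_dnodd}. By the reduction noted after the statement of Theorem \ref{thm_principal}, it is enough to treat $G$ simple simply-connected of type $\mathsf{D}_{2m+1}$; the central classes are already disposed of, so only non-central spherical classes remain.

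First I would confirm that Table \ref{tab_dnodd} lists, up to translation by $Z(G)$, every spherical birational sheet: the sheets $S_1$ and $S_n$ produced by the lemmas, the isolated classes $\orb_{\sigma_\ell}$ arising from the maximal pseudo-Levi $M_\ell$, and the birationally rigid unipotent classes $X_\ell$ and $Z_\ell$ identified just before the table via \S\ref{sss_bruc}. Each corresponding $\tau$ lies in $\mathscr{BB}(G)$, and the fourth column $d$ counts the birational sheets contained in $Z(G)\overline{J(\tau)}^{bir}$, computed through Remark \ref{rk_characteristic} using that the relevant unipotent classes are characteristic.

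The argument then splits into the two halves of Theorem \ref{thm_principal}. For the group analogue of Proposition \ref{prop_losevconj}, I would read off from the relevant tables of \cite{Costantini2010} that $\Lambda(\orb)$ is constant as $\orb$ ranges over the classes of $\overline{J(\tau)}^{bir}$, hence over $Z(G)\overline{J(\tau)}^{bir}$ by Remark \ref{rk_uptocentre}; this common value is recorded in the third column. For the group analogue of Conjecture \ref{conj}, I would show that these third-column entries are pairwise distinct as submonoids of $\poids^+$. Granting this, $\C[\orb_1]\simeq_G\C[\orb_2]$ forces $\Lambda(\orb_1)=\Lambda(\orb_2)$, since sphericity makes $\C[\orb]$ multiplicity-free and hence determined by its weight monoid; thus $\orb_1$ and $\orb_2$ share a single $Z(G)\overline{J(\tau)}^{bir}$, i.e.\ $\overline{J(\tau_2)}^{bir}=z\overline{J(\tau_1)}^{bir}$ for some $z\in Z(G)$. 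The converse is immediate from the constancy just established.

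The main obstacle is the pairwise-distinctness check, because type $\mathsf{D}_{2m+1}$ carries several families with superficially similar weight monoids. I would take particular care to separate the two sheets whose monoids both contain $n_{n-1}(\omega_{n-1}+\omega_n)$, namely $S_n$, supported only on the even fundamental weights $\omega_2,\dots,\omega_{n-3}$, and $\orb_{\sigma_m}$, supported on all of $\omega_1,\dots,\omega_{n-2}$ with even coefficients; and to distinguish $\orb_{\sigma_m}$ from $Z_m$, whose monoids differ by the parity condition $\sum_{i=1}^m n_{2i-1}\in 2\N$ against the all-even condition. Finally, by passing to $\mathrm{SO}_{2n}(\C)$ as in the preceding discussion I would verify that no two of the listed triples are secretly $Z(G)$-conjugate, so that, unlike the $\mathsf{C}_{2p}$ situation of Remark \ref{rk_notchar}, the proof closes cleanly with the entries pairwise distinct and needs no supplementary conjugacy identification.
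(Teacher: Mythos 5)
Your proposal is correct and follows essentially the same route as the paper: the paper's proof consists precisely of invoking the relevant tables of \cite{Costantini2010} (Tables 8, 9) to see that $\Lambda(\orb)$ is constant along each $Z(G)\overline{J(\tau)}^{bir}$, and then observing that the third-column entries of Table \ref{tab_dnodd} are pairwise distinct, with the supporting classification, the $d$-count via Remark \ref{rk_characteristic}, and the conjugacy checks (e.g.\ $\sigma_\ell$ versus $\hat z_n\sigma_\ell$ in $\SO_{2n}(\C)$) all carried out in the lemmas and discussion preceding the proposition, exactly as you arrange them. Your added care over the pairs $S_n$ versus $\orb_{\sigma_m}$ and $\orb_{\sigma_m}$ versus $Z_m$ is a correct elaboration of the distinctness inspection the paper leaves implicit.
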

\begin{proof}
From \cite[Table 8, 9]{Costantini2010}
the weight monoid is preserved along the classes in each $Z(G)\overline{J(\tau)}^{bir}$. The entries in the third column of Table \ref{tab_dnodd} are pairwise distinct.
\end{proof}

\subsection{Type $\mathsf{E}_6$}

The highest root is $\beta = (1,2,2,3,2,1)$. We have
$Z(G)=\langle\hat{z}\rangle$, $\hat{z} = \alpha^\vee_1(\xi)\alpha^\vee_6(\xi^{-1})\alpha^\vee_3(\xi^{-1})\alpha^\vee_5(\xi)$ where $\xi$ is a primitive third root of $1$.
\begin{lem}
Let $\tau_1 = (L_1, Z(L_1), \{1\})$. Then the spherical sheet $S_1 := \overline{J(\tau_1)}^{reg}$ is a spherical birational sheet
containing the unipotent class $2A_1$.
\end{lem}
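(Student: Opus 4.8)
The plan is to follow the template already used for the maximal-Levi cases in types $\mathsf{B}$, $\mathsf{C}$ and $\mathsf{D}$. First I would record the structure of $L_1$: since $c_1=1$, deleting $\alpha_1$ from the $E_6$ diagram leaves a diagram of type $\mathsf{D}_5$, so $L_1$ is a maximal Levi subgroup of type ${\sf T}_1{\sf D}_5$. The crucial preliminary observation is that $\sigma_1=\exp(2\pi i\,\check\omega_1)$ acts trivially on every simple root space (because $\alpha_j(\check\omega_1)=\delta_{1j}$), so $\sigma_1=\hat z$ generates $Z(G)\cong\Z_3$. Consequently $Z(G)\subseteq Z(L_1)^\circ=\exp(\C\,\check\omega_1)$, and since for a Levi subgroup $Z(L_1)=Z(G)\,Z(L_1)^\circ$ by Remark \ref{rk_itsalevi}, I conclude $Z(L_1)=Z(L_1)^\circ$ is connected. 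This justifies writing $\tau_1=(L_1,Z(L_1),\{1\})$ and places us exactly in the situation of the $\mathsf{B}_n$ / odd-$\mathsf{D}_n$ maximal-Levi lemmas.

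Next I would decompose the regular closure $S_1=\overline{J(\tau_1)}^{reg}=\bigcup_{z\in Z(L_1)}G\cdot(z\,\ind_{L_1}^{C_G(z)}\{1\})$. Because the coefficient of $\alpha_1$ in every root of $E_6$ is $0$ or $\pm1$, the centralizer $C_G(z)$ equals $L_1$ for every non-central $z\in Z(L_1)$ and jumps to $G$ only at the three central elements $1,\hat z,\hat z^2$. Hence the regular part contributes the semisimple classes $\orb_{\exp(\zeta\check\omega_1)}$ for $\zeta\in\C\setminus 2\pi i\Z$, and the only isolated unipotent classes appearing in $S_1$ are $\ind_{L_1}^G\{1\}$ together with its two central translates.

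The decisive step is to identify $\orb:=\ind_{L_1}^G\{1\}$ and to prove birationality. I would identify $\orb$ as the Richardson orbit of the maximal parabolic $P_1$ by a dimension count: with $U$ the unipotent radical of $P_1$ one has $\dim U=|\Phi^+|-|\Phi^+_{\mathsf{D}_5}|=36-20=16$, so $\dim\orb=2\dim U=32$; as $2A_1$ is the only nilpotent orbit of dimension $32$ in $E_6$, this forces $\orb=2A_1$ (consistent with the earlier observation that $2A_1$ is the unique non-rigid spherical nilpotent orbit in $E_6$). To see that $\orb$ is birationally induced from $(L_1,\{1\})$, I would invoke Lemma \ref{lem_compgp}: for $\nu$ in the nilpotent orbit $2A_1$ the centralizer $C_{\overline G}(\nu)$ in the adjoint group $\overline G=E_6^{ad}$ is connected (the component group of $2A_1$ is trivial, from the standard tables, cf. \cite{CollMcGov}, \cite{Carter2}), whence the generalized Springer map is birational.

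Putting these together yields $S_1=\bigcup_{\zeta\in\C\setminus 2\pi i\Z}\orb_{\exp(\zeta\check\omega_1)}\sqcup 2A_1\sqcup\hat z(2A_1)\sqcup\hat z^2(2A_1)=\overline{J(\tau_1)}^{bir}$, which is spherical by criterion (ii) of the definition of spherical birational sheet since $2A_1$ is spherical. Finally, because $\hat z\in Z(L_1)^\circ$, translation by $\hat z$ merely reindexes the semisimple classes and cyclically permutes the three central translates, so $Z(G)S_1=S_1$ and this sheet accounts for a single $G$-class (so $d=1$, as predicted by the index $d_{L_1}$ of Remark \ref{rk_centre}, here trivial since $Z(G)\subseteq Z(L_1)^\circ$). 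The main obstacle is the third step: correctly pinning down the induced orbit as $2A_1$ and verifying the connectedness of its centralizer, as these are the only inputs requiring explicit $E_6$ orbit data rather than the formal machinery of birational induction.
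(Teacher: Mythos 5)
Your proof is correct and takes essentially the same route as the paper's: the same decomposition of $S_1=\overline{J(\tau_1)}^{reg}$ over the connected centre $Z(L_1)$ (connected because $\exp(2\pi i\check\omega_1)$ is a generator of $Z(G)$ lying in $Z(L_1)^\circ$), followed by birationality of the induction via Lemma \ref{lem_compgp} using the connectedness of $C_{\overline{G}}(\bar u)$ for $u\in 2A_1$ from Carter's tables. Your only additions — the dimension count $\dim\ind_{L_1}^G\{1\}=2\cdot(36-20)=32$ pinning down the Richardson orbit as $2A_1$ (which the paper simply asserts) and the explicit check that $Z(G)S_1=S_1$, so $d=1$ — are correct supplementary details, not a different method.
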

\begin{proof}
$L_1$ is maximal of type ${\sf D}_5 {\sf T}_1$ and $Z(L_1)$ is connected since $L_1=C_G(\exp \check \omega_1)$ and $\exp( 2\pi i\check \omega_1)=\hat z$. 
Then
$$
S_1 = \bigcup_{z \in Z(L_1)} G \cdot (z \ind_{L_1}^G \{1\}) = \bigcup_{\zeta \in \C \setminus 2 \pi i \Z} \orb_{\exp(\zeta \check{\omega}_1)} \cup Z(G) \ind_{L_1}^G \{1\}.
$$
The class $\ind_{L_1}^G \{1\}= 2A_1$ is birationally induced from $(L_1, \{1\})$ by Lemma \ref{lem_compgp}.
Indeed, for $u \in \ind_{L_1}^G \{1\}$, the subgroup $C_{\overline{G}} (\bar{u})$ is connected, by \cite[p. 402]{Carter2}. Hence
$$
S_1 =\overline{J(\tau_1)}^{bir}  = \bigcup_{\zeta \in \C \setminus 2 \pi i \Z} \orb_{\exp(\zeta \check{\omega}_1)} \sqcup Z(G) {2A_1}$$
and $Z(G)\overline{J(\tau_1)}^{bir}=\overline{J(\tau_1)}^{bir}$.
\end{proof}

There is only one more spherical pseudo-Levi subgroup,  $M_2$ of type ${\sf A}_1 {\sf A}_5$.
Observe that $\sigma_2^2=1$ and $Z(M_2)={\langle \sigma_2\rangle}\times Z(G)$.
$M_2$ gives rise to the (birational) sheet $\orb_{\sigma_2}$ which coincides with an isolated class.
We have $Z(G)\orb_{\sigma_2} = \orb_{\sigma_2} \sqcup \hat z  \orb_{\sigma_2} \sqcup \hat z^2 \orb_{\sigma_2}$.
Up to central elements, the remaining spherical conjugacy classes in $G$ are $A_1$ and $3A_1$: these are birationally rigid unipotent conjugacy classes in $G$.

\begin{table}[H]
\centering
{\renewcommand{\arraystretch}{1.5}
\begin{tabular}{|c|c|c|c|}
\hline
$\tau$                       & $\overline{J(\tau)}^{bir}$              & $\Lambda(\orb)$& $d$ \\ \hline
$ (L_1, Z(L_1), \{1\})$      & 
$\bigcup\limits_{\zeta \in \C \setminus 2 \pi i \Z} 
\orb_{\exp(\zeta \check{\omega}_1)}
 \sqcup Z(G) {2A_1}$
  & $ n_1 (\omega_1 + \omega_6)+n_2\omega_2$&1 \\ \hline
  $(M_2,\{\sigma_2\},\{1\})$
 &
$\orb_{\sigma_2}$ 
 &
 $n_1(\omega_1 + \omega_6) +n_3(\omega_3 + \omega_5) +2n_2 \omega_2 +2 n_4 \omega_4$    &3    \\ \hline
 $(G,\{1\}, A_1)$
 &
 ${A_1}$ &
 $n_2 \omega_2$ &3 \\ \hline
$(G,\{1\}, 3A_1)$
& ${3A_1}$ 
& $n_1 (\omega_1 + \omega_6) +n_3(\omega_3 + \omega_5) +n_2 \omega_2 +n_4 \omega_4$   &3 	\\ \hline
  \end{tabular}}
    \caption{Type $\mathsf{E}_6$.}
  \label{tab_e6}
\end{table}

\begin{prop}
Theorem \ref{thm_principal} holds for $G$ of type $\mathsf{E}_6$.
\end{prop}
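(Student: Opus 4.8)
The plan is to carry out, for the four families of spherical birational sheets of $G$ already produced above, the same two-sided verification used in the previous types, now reading the data off Table \ref{tab_e6}. The classification itself is complete: up to translation by $Z(G)=\langle\hat z\rangle\simeq\Z_3$, the spherical birational sheets of $G$ are $\overline{J(\tau_1)}^{bir}$ with $\tau_1=(L_1,Z(L_1),\{1\})$ (which contains $2A_1$), the isolated class $\orb_{\sigma_2}$, and the birationally rigid unipotent classes $A_1$ and $3A_1$. What remains is to deduce both implications of Theorem \ref{thm_principal} for these sheets, using that a spherical class $\orb$ has multiplicity-free coordinate ring, so that $\C[\orb]$ is determined as a $G$-module by the weight monoid $\Lambda(\orb)$.

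For the implication that a common $Z(G)$-class of birational sheet forces $\C[\orb_1]\simeq_G\C[\orb_2]$, I would first invoke the explicit weight-monoid computations for spherical conjugacy classes in type $\mathsf{E}_6$ from \cite{Costantini2010} to check that $\Lambda(\orb)$ is constant as $\orb$ runs over the conjugacy classes inside a fixed $\overline{J(\tau)}^{bir}$. By Remark \ref{rk_uptocentre}, $\Lambda$ is moreover invariant under multiplication by elements of $Z(G)$, so $\Lambda$ is in fact constant on each $Z(G)\overline{J(\tau)}^{bir}$; since the rings are multiplicity-free, this constancy is equivalent to $\C[\orb_1]\simeq_G\C[\orb_2]$, giving the ``if'' direction.

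For the converse, which is the genuinely new content (the group analogue of Conjecture \ref{conj}), I would verify that the four monoids recorded in the third column of Table \ref{tab_e6} are pairwise distinct submonoids of $\poids^+$: the entry $n_2\omega_2$ of $A_1$, the entry $n_1(\omega_1+\omega_6)+n_2\omega_2$ of $\tau_1$, the entry of $\orb_{\sigma_2}$ containing $2n_2\omega_2+2n_4\omega_4$, and the entry of $3A_1$ containing $n_2\omega_2+n_4\omega_4$ are separated already by which fundamental weights appear and by whether $\omega_2$ (rather than only $2\omega_2$) and $\omega_4$ lie in the monoid. Because $\Lambda$ is a $Z(G)$-invariant, distinctness of these four representatives shows that $\Lambda(\orb_1)=\Lambda(\orb_2)$ can hold only when $\orb_1$ and $\orb_2$ belong to the same $Z(G)\overline{J(\tau)}^{bir}$, i.e. $\overline{J(\tau_2)}^{bir}=z\overline{J(\tau_1)}^{bir}$ for some $z\in Z(G)$.

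I expect no serious obstacle for this type; the one point requiring care is the tally of distinct birational sheets inside each $Z(G)\overline{J(\tau)}^{bir}$ recorded in the fourth column ($d=1$ for $\tau_1$ and $d=3$ otherwise). This follows from Remark \ref{rk_characteristic} together with the fact, noted in \S\ref{sss_bruc}, that the relevant rigid unipotent classes are characteristic in their ambient subgroups, so that translating the semisimple part by $Z(G)$ produces exactly $d_M$ pairwise non-conjugate triples; unlike the $\mathsf{C}_{2p}$ case of Remark \ref{rk_notchar}, no non-characteristic complication arises here.
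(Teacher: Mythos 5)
Your proposal is correct and follows essentially the same route as the paper: constancy of $\Lambda(\orb)$ on each $Z(G)\overline{J(\tau)}^{bir}$ is read off from the tables of \cite{Costantini2010}, and the converse follows from the pairwise distinctness of the four monoids in Table \ref{tab_e6}, which you verify correctly (separation by the occurrence of $\omega_3+\omega_5$, $\omega_4$, and by $\omega_2$ versus $2\omega_2$). Your added remarks on the counts $d=1$ and $d=3$ via Remark \ref{rk_characteristic} match the paper's fourth column and introduce no new issues.
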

\begin{proof}
From \cite[Table 16, 17]{Costantini2010}
the weight monoid is preserved along the classes in each $Z(G)\overline{J(\tau)}^{bir}$. The entries in the third column of Table \ref{tab_e6} are pairwise distinct.
\end{proof}

\subsection{Type $\mathsf{E}_7$}

The highest root is $\beta = (2,2,3,4,3,2,1)$. We have
$Z(G)=\langle\hat{z}\rangle$, $\hat{z} = \alpha^\vee_2(-1)\alpha^\vee_5(-1)\alpha^\vee_7(-1)$.

\begin{lem}
Let $\tau_7 = (L_7, Z(L_7), \{1\})$.
Then the spherical sheet $S_7 := \overline{J(\tau_7)}^{reg}$ is a spherical birational sheet
containing the unipotent class $(3A_1)''$.
\end{lem}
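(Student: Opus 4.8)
The plan is to mirror the template used for the preceding maximal-Levi lemmas, in particular the type $\mathsf{E}_6$ case. First I would fix the structure of $L_7 = L_{\Theta_7}$: deleting $\alpha_7$ from $\Delta$ leaves the subdiagram on $\{\alpha_1,\dots,\alpha_6\}$, which is of type $\mathsf{E}_6$, so $L_7$ is maximal of type $\mathsf{E}_6\mathsf{T}_1$. Since $c_7=1$ in $\beta=(2,2,3,4,3,2,1)$, one has $L_7=C_G(\exp\check\omega_7)$, and $\hat z=\exp(2\pi i\check\omega_7)$ generates $Z(G)$. As $\hat z\in Z(L_7)^\circ=\exp(\C\check\omega_7)$, the same computation as in the $\mathsf{E}_6$ lemma shows $Z(L_7)=Z(L_7)^\circ$ is connected, which is why the datum is written $(L_7,Z(L_7),\{1\})$.

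Next I would write down the sheet. Because $L_7$ is maximal, the only $z\in Z(L_7)$ with $C_G(z)\supsetneq L_7$ are the central ones, and $\{\exp(\zeta\check\omega_7)\mid\zeta\in 2\pi i\Z\}=\{1,\hat z\}=Z(G)$, for which $C_G(z)=G$. Hence
$$S_7=\bigcup_{z\in Z(L_7)}G\cdot\big(z\,\ind_{L_7}^{C_G(z)}\{1\}\big)=\bigcup_{\zeta\in\C\setminus 2\pi i\Z}\orb_{\exp(\zeta\check\omega_7)}\ \cup\ Z(G)\,\ind_{L_7}^G\{1\}.$$
The generic pieces $\orb_{\exp(\zeta\check\omega_7)}$ require nothing, since there $\ind_{L_7}^{L_7}\{1\}=\{1\}$ is trivially birationally induced; so proving $S_7=\overline{J(\tau_7)}^{bir}$ reduces to showing that the single nontrivial induction $\ind_{L_7}^G\{1\}$ is birational.

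The key step is therefore to identify $\ind_{L_7}^G\{1\}$ and to verify birationality. This induced class is the Richardson orbit of a parabolic with Levi factor of type $\mathsf{E}_6\mathsf{T}_1$, which the induction tables (cf.\ \cite[\S 7]{CollMcGov}) identify as $(3A_1)''$ --- exactly the spherical class singled out in \S\ref{sss_bruc} as not birationally rigid. To prove it is birationally induced I would invoke Lemma \ref{lem_compgp}: passing to the adjoint group $\overline G$ and a Springer isomorphism, it suffices that $C_{\overline G}(\bar u)$ be connected for $u\in(3A_1)''$, which holds by \cite[p. 402]{Carter2}. With this, every induction occurring in $S_7$ is birational, so $\bir(Z(L_7),\{1\})=Z(L_7)$ and
$$S_7=\overline{J(\tau_7)}^{bir}=\bigcup_{\zeta\in\C\setminus 2\pi i\Z}\orb_{\exp(\zeta\check\omega_7)}\sqcup Z(G)\,(3A_1)'',\qquad Z(G)\overline{J(\tau_7)}^{bir}=\overline{J(\tau_7)}^{bir}.$$
Sphericality is then inherited from $S_7$ (equivalently, $(3A_1)''$ and the semisimple classes with centraliser $\mathsf{E}_6\mathsf{T}_1$ are spherical), so $S_7$ is a spherical birational sheet containing $(3A_1)''$.

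The main obstacle I anticipate is the connectedness of $C_{\overline G}(\bar u)$ for the \emph{specific} class $(3A_1)''$ (and not $(3A_1)'$): Lemma \ref{lem_compgp} breaks down if the adjoint centraliser is disconnected, so one must read the component group off Carter's tables with care, using the precise class. Should that route fail, the fallback is the direct criterion of Lemma \ref{lem_centrinpar}, exhibiting a representative $u\in(3A_1)''\cap U_{\Theta_7}$ with $C_G(u)=C_{P_{\Theta_7}}(u)$.
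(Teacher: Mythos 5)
Your proposal is correct and follows essentially the same route as the paper: maximality of $L_7$ of type $\mathsf{E}_6\mathsf{T}_1$ with $Z(L_7)$ connected via $\exp(2\pi i\check\omega_7)=\hat z$, the same sheet decomposition, identification of $\ind_{L_7}^G\{1\}=(3A_1)''$, and birationality via connectedness of $C_{\overline G}(\pi(u))$ from Carter's tables (Lemma \ref{lem_compgp}). The only slip is the citation: for $\mathsf{E}_7$ the relevant page is \cite[p.~403]{Carter2}, not p.~402 (which is the $\mathsf{E}_6$ reference you mirrored); your anticipated caution about checking the component group for the specific class $(3A_1)''$ is exactly the right point, and the table does confirm connectedness there.
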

\begin{proof}
$L_7$ is maximal of type ${\sf E}_6{\sf T}_1$ and $Z(L_7)$ is connected since $L_7=C_G(\exp \check \omega_7)$ and $\exp (2\pi i\check \omega_7)=\hat z$.
Then
$$
S_7 = \bigcup_{z \in Z(L_7)} G \cdot (z \ind_{L_7}^G \{1\}) = \bigcup_{\zeta \in \C \setminus 2 \pi i \Z} \orb_{\exp(\zeta \check{\omega}_7)} \cup Z(G) \ind_{L_7}^G \{1\}.
$$
The isolated class $\ind_{L_7}^G \{1\}=(3A_1)''$ is birationally induced:  for $u \in \ind_{L_7}^G \{1\}$, the group $C_{\overline{G}} (\pi(u))$ is connected, by \cite[p. 403]{Carter2}.
Hence
$$
 S_7 = \overline{J(\tau_7)}^{bir} = \bigcup_{\zeta \in \C \setminus 2 \pi i \Z} \orb_{\exp(\zeta \check{\omega}_7)} \sqcup Z(G) {(3A_1)''}$$
and $Z(G)\overline{J(\tau_7)}^{bir}=\overline{J(\tau_7)}^{bir}$.
\end{proof}

We consider the remaining spherical pseudo-Levi subgroups:
\begin{enumerate}[label=(\roman*)]
\item  Consider
$M_2$, maximal of type ${\sf A}_7$.
We have $\sigma_2^2=\hat z$ and 
$Z(M_2)=\langle\sigma_2\rangle$, $\sigma_2$ and $\hat z\sigma_2=\sigma_2^{-1}$ are $G$-conjugate via  $w_0$.
Then $\orb_{ \sigma_2}$
is a (birational) sheet consisting of an isolated class, and $Z(G)S=S$.
\item  Consider $M_1$, maximal of type ${\sf D}_6{\sf A}_1$.
 We have $\sigma_1^2=1$ and 
$Z(M_1)=\langle {\hat z, \sigma_1}\rangle$, $\sigma_1$ and $\hat z\sigma_1$ are not $G$-conjugate (in fact $G$ has $2$ classes of non-central involutions: $\orb_{\sigma_1}$ and $\orb_{\hat z\sigma_1}$).
Then $\orb_{ \sigma_1}$
is a (birational) sheet consisting of an isolated class, and $Z(G)\orb_{ \sigma_1}=\orb_{ \sigma_1}\sqcup\hat z \orb_{ \sigma_1}$.
\end{enumerate}
Up to central elements, the remaining spherical conjugacy classes in $G$ are $A_1$, $2A_1$, $(3A_1)'$ and $4A_1$: these are birationally rigid unipotent conjugacy classes in $G$.

\begin{table}[H]
\centering
{\renewcommand{\arraystretch}{1.5}
\begin{tabular}{|c|c|c|c|}
\hline
$\tau$                       & $\overline{J(\tau)}^{bir}$                & $\Lambda(\orb)$& $d$ \\ \hline
$ (L_7, Z(L_7), \{1\})$      & 
$\bigcup\limits_{\zeta \in \C \setminus 2 \pi i \Z} 
\orb_{\exp(\zeta \check{\omega}_7)}
 \sqcup Z(G) {(3 A_1)''}$
  & $ n_1 \omega_1 +n_6 \omega_6 +2n_7\omega_7$ & 1 \\ \hline
$ (M_1, \{\sigma_1\}, \{1\})$ &
$\orb_{\sigma_1} $
&$2 n_1 \omega_1 + 2 n_3\omega_3 +n_4 \omega_4 +n_6 \omega_6$
& 2  \\ \hline
$(M_2, \{\sigma_2\},\{1\})$&
$\orb_{\sigma_2}$
 &
 $\sum\limits_{i=1}^7 2n_i \omega_i$& 1 
 \\ \hline
$(G,\{1\}, A_1)$&
${A_1}$ &
 $n_1 \omega_1$ & 2  \\ \hline
$(G,\{1\}, 2A_1)$&
${2A_1}$ &
 $n_1 \omega_1 +n_6 \omega_6$ & 2  \\ \hline
$(G,\{1\}, {(3A_1)'})$&
${(3A_1)'}$ &
 $n_1 \omega_1 +n_3 \omega_3 +n_4\omega_4 +n_6 \omega_6 $& 2   \\ \hline
$(G,\{1\}, 4A_1)$&
$4 A_1$ &
 $\sum\limits_{i=1}^7 n_i \omega_i \mid n_2 + n_5 + n_7$ even & 2   \\ \hline
  \end{tabular}}
    \caption{Type $\mathsf{E}_7$.}
  \label{tab_e7}
\end{table}

\begin{prop}
Theorem \ref{thm_principal} holds for $G$ of type $\mathsf{E}_7$.
\end{prop}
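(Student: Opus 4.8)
The plan is to follow the same two-step scheme used for all the previous types, now specialised to the seven rows of Table \ref{tab_e7}. Since a spherical conjugacy class is multiplicity-free, its coordinate ring is determined as a $G$-module by the weight monoid $\Lambda(\orb)$; hence for type $\mathsf{E}_7$ the statement of Theorem \ref{thm_principal} splits into two independent checks: (i) that $\Lambda(\orb)$ is constant as $\orb$ ranges over $Z(G)\overline{J(\tau)}^{bir}$ for each $\tau$ in the first column, which yields the group analogue of Proposition \ref{prop_losevconj}; and (ii) that the seven monoids recorded in the third column are pairwise distinct, which yields the analogue of Conjecture \ref{conj}.

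For step (i) the lemmas above have already identified, for each $\tau$, exactly which classes fill $\overline{J(\tau)}^{bir}$, namely the generic semisimple classes $\orb_{\exp(\zeta\check\omega_i)}$ together with a single isolated or birationally rigid unipotent class in the closure. I would then read off the weight monoid of each such class from the type $\mathsf{E}_7$ tables of \cite{Costantini2010} and confirm that they all coincide with the entry recorded in Table \ref{tab_e7}. The point to watch is precisely the phenomenon flagged for $\mathsf{C}_2$: along a sheet the multiplicity of a weight can a priori drop at the special class in the closure, so the content of step (i) is to verify, case by case, that this does not occur along a birational sheet.

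For step (ii) I would establish distinctness by comparing which fundamental weights occur and with which minimal coefficients. Most pairs are separated immediately: $A_1$, $2A_1$ and $(3A_1)'$ involve progressively more of the $\omega_i$, and $\orb_{\sigma_2}$ is the only full even monoid $\sum_i 2n_i\omega_i$. The two comparisons I expect to need care are $\orb_{\sigma_1}$ against $(3A_1)'$ and $\orb_{\sigma_2}$ against $4A_1$: the first pair share the same four fundamental weights but are separated because $\omega_1$ lies in $\Lambda((3A_1)')$ whereas the monoid of $\orb_{\sigma_1}$ contains only $2\omega_1$; the second pair are separated because the parity constraint $n_2+n_5+n_7\in 2\N$ defining $\Lambda(4A_1)$ still admits $\omega_1$, which $\sum_i 2n_i\omega_i$ does not.

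The main obstacle will be step (i): unlike the classical series, there is no uniform family to exploit in the exceptional case, so the constancy of the weight monoid cannot be argued structurally and rests entirely on matching the explicit data of \cite{Costantini2010} against the sheet descriptions obtained in the preceding lemmas, confirming in each of the seven cases that no multiplicity is lost in passing from the generic semisimple classes to the isolated class in the birational closure.
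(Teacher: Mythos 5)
Your proposal is correct and follows essentially the same route as the paper, whose proof simply cites \cite[Table 18, 19]{Costantini2010} to check that the weight monoid is constant along each $Z(G)\overline{J(\tau)}^{bir}$ and then observes that the entries in the third column of Table \ref{tab_e7} are pairwise distinct. Your explicit distinctness checks (separating $\orb_{\sigma_1}$ from $(3A_1)'$ via the parity of the coefficient of $\omega_1$, and $\orb_{\sigma_2}$ from $4A_1$ via $\omega_1 \in \Lambda(4A_1)$) are sound elaborations of what the paper leaves implicit, the only minor imprecision being that six of the seven birational sheets here are single isolated or unipotent classes, so the constancy check in step (i) is only substantive for the row $(L_7, Z(L_7), \{1\})$.
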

\begin{proof}
From \cite[Table 18, 19]{Costantini2010}
the weight monoid is preserved along  the classes in each $Z(G)\overline{J(\tau)}^{bir}$. The entries in the third column of Table \ref{tab_e7} are pairwise distinct.
\end{proof}

\subsection{Type $\mathsf{E}_8$}

The highest root is $\beta_1 =   (2,3,4,6,5,4,3,2)$. There are no spherical proper Levi subgroups.

We list the spherical pseudo-Levi subgroups.
\begin{enumerate}[label=(\roman*)]
\item  Consider $M_8$, maximal of type ${\sf A}_1{\sf E}_7$.
We have $\sigma_8^2=1$ and 
$Z(M_8)=\langle\sigma_8\rangle$.
Then $\orb_{\sigma_8}$
is a (birational) sheet consisting of an isolated class.
\item Consider $M_1$, maximal of type ${\sf D}_8$.
We have $\sigma_1^2=1$ and 
$Z(M_1)=\langle\sigma_1\rangle$.
Then $\orb_{ \sigma_1}$
is a (birational) sheet consisting of an isolated class.
\end{enumerate}
The remaining spherical conjugacy classes in $G$ are $A_1$, $2A_1$, $3A_1$ and $4A_1$: these are birationally rigid unipotent conjugacy classes in $G$.

\begin{table}[H]
\centering
{\renewcommand{\arraystretch}{1.5}
\begin{tabular}{|c|c|c|} 
\hline
$\tau$                       & $\overline{J(\tau)}^{bir}$                & $\Lambda(\orb)$ \\ \hline
$(M_8, \{\sigma_8\}, \{1\})$    & 
 $\orb_{\sigma_8}$
  & $ n_1 \omega_1 +n_6 \omega_6 +2n_7 \omega_7 + 2 n_8 \omega_8$\\ \hline
$(M_1,\{\sigma_1\},\{1\})$& 
$\orb_{\sigma_1}$ 
 &
 $\sum\limits_{i =1}^8 2 n_i \omega_i $        \\ \hline
$(G,\{1\}, A_1)$	 & 
${A_1}$ 
 &
 $ n_8 \omega_8 $        \\ \hline
$(G,\{1\}, 2A_1)$									& 
${2A_1}$  			&
$ n_1 \omega_1 +n_8 \omega_8 $        		\\ \hline
$(G,\{1\}, 3A_1)$									& 
${3A_1}$  			&
$ n_1 \omega_1 +n_6\omega_6 +n_7 \omega_7 +n_8 \omega_8 $        		\\ \hline
$(G,\{1\}, 4A_1)$								& 
${4A_1}$  			&
$ \sum\limits_{i=1}^8 n_i \omega_i$  		\\ \hline
  \end{tabular}}
  \caption{Type $\mathsf{E}_8$.}
  \label{tab_e8}
\end{table}

\begin{prop}
Theorem \ref{thm_principal} holds for $G$ of type $\mathsf{E}_8$.
\end{prop}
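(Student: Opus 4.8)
The plan is to apply the uniform method used for all the preceding types, exploiting the decisive simplification that in type $\mathsf{E}_8$ the centre $Z(G)$ is trivial, since $\mathsf{E}_8$ is simultaneously simply-connected and adjoint. Hence $Z(G)\overline{J(\tau)}^{bir} = \overline{J(\tau)}^{bir}$ for every $\tau \in \mathscr{BB}(G)$, Table \ref{tab_e8} carries no fourth column, and the central-shift clause $\overline{J(\tau_2)}^{bir} = z\overline{J(\tau_1)}^{bir}$ in Theorem \ref{thm_principal} collapses to the equality $\overline{J(\tau_2)}^{bir} = \overline{J(\tau_1)}^{bir}$.

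First I would dispatch the partition assertion: the birational sheets partition $G$ by \cite[Theorem 5.1]{AF}, sphericity is preserved along sheets, and the enumeration preceding Table \ref{tab_e8} shows that its rows exhaust the non-central spherical classes, so the listed spherical birational sheets partition $G_{sph}$. For the forward direction of the biconditional (the group analogue of Proposition \ref{prop_losevconj}), I would read off from the tables of \cite{Costantini2010} that the weight monoid $\Lambda(\orb)$ is constant as $\orb$ ranges over the orbits of a fixed $\overline{J(\tau)}^{bir}$. Because spherical conjugacy classes have multiplicity-free coordinate rings, constancy of $\Lambda(\orb)$ is equivalent to the $G$-module isomorphism $\C[\orb_1] \simeq_G \C[\orb_2]$; combined with the vanishing of $Z(G)$, this yields the implication from equal birational sheets to isomorphic coordinate rings.

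The converse (the analogue of Conjecture \ref{conj}) reduces to verifying that the six weight monoids in the third column of Table \ref{tab_e8} are pairwise distinct, since with trivial centre there are no further identifications to reconcile. This is a direct comparison of supports and coefficients: the monoids attached to $A_1$, $2A_1$, $3A_1$, $4A_1$, $M_1$ and $M_8$ differ either in which fundamental weights occur or in the coefficients attached to them (for instance, the monoid of $\orb_{\sigma_1}$ has all coefficients doubled, whereas that of $4A_1$ does not). I expect this final bookkeeping to be the only thing requiring explicit checking, and it is entirely routine; the triviality of $Z(G)$ removes the delicate conjugacy phenomena — such as the non-characteristic rigid orbit of Remark \ref{rk_notchar} in type $\mathsf{C}_{2p}$ — that complicated the other cases, so no conceptual obstacle remains.
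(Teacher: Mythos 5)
Your proposal is correct and takes essentially the same route as the paper: the forward implication is read off from the tables of \cite{Costantini2010} (constancy of $\Lambda(\orb)$ along each $\overline{J(\tau)}^{bir}$), and the converse is exactly the check that the six monoids in the third column of Table \ref{tab_e8} are pairwise distinct. Your preliminary observation that $Z(G)$ is trivial in type $\mathsf{E}_8$, so that no central shifts or conjugacy identifications need be tracked, is precisely why the paper's table omits the fourth column, and the rest matches the paper's short verification.
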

\begin{proof}
From \cite[Table 20, 21]{Costantini2010}
the weight monoid is preserved along the classes in each $Z(G)\overline{J(\tau)}^{bir}$. The entries in the third column of Table \ref{tab_e8} are pairwise distinct.
\end{proof}

\subsection{Type $\mathsf{F}_4$}

The highest root is $\beta_1 =  (2,3,4,2)$. There are no spherical proper Levi subgroups.

We list the spherical pseudo-Levi subgroups.
\begin{enumerate}[label=(\roman*)]
\item  Consider $M_1$, maximal of type ${\sf A}_1 {\sf C}_3$. 
We have $\sigma_1^2=1$ and 
$Z(M_1)=\langle\sigma_1\rangle$.
Then $\orb_{ \sigma_1}$
is a (birational) sheet consisting of an isolated class.
\item Consider
 $M_4$, maximal of type ${\sf B}_4$.
 We have $\sigma_4^2=1$ and 
$Z(M_4)=\langle\sigma_4\rangle$.
Then $\orb_{\sigma_4}$
is a (birational) sheet consisting of an isolated class.
\item $M_4$ admits the birationally rigid unipotent class $\orb_{x_{\beta_1}(1)}^{M_4}$, corresponding to the partition $[2^2,1^5]$ in ${\rm SO}_9(\C)$.
 Then $ \orb_{ \sigma_4 x_{\beta_1}(1)}$
is a (birational) sheet consisting of an isolated class. 
\end{enumerate}
The remaining spherical conjugacy classes in $G$ are $A_1$, $\widetilde A_1$ and $A_1+\widetilde A_1$: these are birationally rigid unipotent conjugacy classes in $G$.

\begin{table}[H]
\centering
{\renewcommand{\arraystretch}{1.5}
\begin{tabular}{|c|c|c|}
\hline
$\tau$                       & $\overline{J(\tau)}^{bir}$                 & $\Lambda(\orb)$ \\ \hline
$(M_4, \{\sigma_4\}, \{1\})$ & 
$\orb_{\sigma_4}$ 
 &
 $n_4 \omega_4$        \\ \hline
$(M_1, \{\sigma_1\}, \{1\})$ & 
$\orb_{\sigma_1}$  &
 $\sum\limits_{i=1}^{4} 2n_i\omega_i$        \\  \hline
$(M_4, \{\sigma_4\}, \orb_{[2^2,1^5]}^{M_4})$& 
$\orb_{\sigma_4 x_{\beta_1}(1)}$  &
 $\sum\limits_{i=1}^{4} n_i \omega_i$        \\ \hline
$(G,\{1\}, A_1)$								& 
${A_1}$  			&
$ n_1\omega_1$        		\\ \hline
$(G,\{1\}, \widetilde A_1)$							& 
${\widetilde{A}_1}$  			&
$ n_1 \omega_1 + 2 n_4 \omega_4 $        	\\ \hline
$(G,\{1\}, A_1 + \widetilde{A}_1)$								& 
${A_1 + \widetilde{A}_1}$  			&
$ n_1 \omega_1 +n_2 \omega_2 +2n_3\omega_3 +2n_4\omega_4 $        \\ \hline
  \end{tabular}}
    \caption{Type $\mathsf{F}_4$.}
  \label{tab_f4}
\end{table}

\begin{prop}
Theorem \ref{thm_principal} holds for $G$ of type $\mathsf{F}_4$.
\end{prop}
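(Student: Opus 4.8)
The plan is to reproduce, for type $\mathsf{F}_4$, the argument already used for the other types, exploiting the fact that here $Z(G)$ is trivial. Consequently $Z(G)\overline{J(\tau)}^{bir}=\overline{J(\tau)}^{bir}$ for every $\tau\in\mathscr{BB}(G)$, the translation by central elements is vacuous (which is why Table \ref{tab_f4} carries no column recording the integer $d$), and the condition ``$\overline{J(\tau_2)}^{bir}=z\,\overline{J(\tau_1)}^{bir}$ for some $z\in Z(G)$'' of Theorem \ref{thm_principal} collapses to the plain equality $\overline{J(\tau_1)}^{bir}=\overline{J(\tau_2)}^{bir}$, i.e.\ to the assertion that $\orb_1$ and $\orb_2$ sit in the same birational sheet. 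So what must be shown is: for spherical $\orb_1,\orb_2$ one has $\C[\orb_1]\simeq_G\C[\orb_2]$ if and only if they lie in a common birational sheet.

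Both implications reduce to the weight monoid $\Lambda(\orb)$. First I would record the forward direction, which is the group analogue of Proposition \ref{prop_losevconj}: a birational sheet is contained in a single (spherical) sheet, so $\Lambda(\orb)$ is constant as $\orb$ ranges over $\overline{J(\tau)}^{bir}$; this is exactly the content needed to make the third column of Table \ref{tab_f4} well defined, and I would verify it by reading off $\Lambda(\orb)$ from the tables of \cite{Costantini2010} for type $\mathsf{F}_4$. For the converse I would use that spherical classes are multiplicity-free, so that $\C[\orb_1]\simeq_G\C[\orb_2]$ is equivalent to $\Lambda(\orb_1)=\Lambda(\orb_2)$; it then suffices to check that the six weight monoids in the third column of Table \ref{tab_f4} are pairwise distinct, for then equality of monoids forces $\orb_1,\orb_2$ into the same row, hence the same birational sheet.

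The genuinely computational steps — and thus the main obstacle — lie behind the first and third columns of the table, not in the final logic. On the one hand I must certify that every class occurring in each $\overline{J(\tau)}^{bir}$ is really birationally induced: for $\mathsf{F}_4$ the three isolated classes $\orb_{\sigma_1}$, $\orb_{\sigma_4}$, $\orb_{\sigma_4 x_{\beta_1}(1)}$ are handled by the connectedness-of-centraliser criterion of Lemma \ref{lem_compgp}, while $A_1$, $\widetilde A_1$ and $A_1+\widetilde A_1$ are birationally rigid by \S\ref{sss_bruc}. On the other hand the weight monoids must be extracted correctly from \cite{Costantini2010}. Granting these bookkeeping facts, the pairwise-distinctness check is short: the monoids $n_4\omega_4$, $n_1\omega_1$ and $n_1\omega_1+2n_4\omega_4$ are separated by their supports, and among the three monoids supported on all of $\{\omega_1,\dots,\omega_4\}$ the distinguishing invariant is the set of fundamental weights occurring with coefficient $1$ rather than $2$ — empty for $\sum_{i=1}^4 2n_i\omega_i$, all of $\{1,2,3,4\}$ for $\sum_{i=1}^4 n_i\omega_i$, and $\{1,2\}$ for $n_1\omega_1+n_2\omega_2+2n_3\omega_3+2n_4\omega_4$ — so no two coincide. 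This completes the verification and hence Theorem \ref{thm_principal} in type $\mathsf{F}_4$.
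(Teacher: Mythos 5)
Your proof is correct and follows essentially the paper's own route: since $Z(G)$ is trivial in type $\mathsf{F}_4$, the statement reduces to constancy of $\Lambda(\orb)$ on each spherical birational sheet (read off from \cite[Tables 22, 23, 24]{Costantini2010}) together with pairwise distinctness of the six monoids in Table \ref{tab_f4}, and your explicit support/coefficient check of distinctness is a correct elaboration of what the paper leaves to inspection. One slip in the background bookkeeping: the three isolated classes $\orb_{\sigma_1}$, $\orb_{\sigma_4}$, $\orb_{\sigma_4 x_{\beta_1}(1)}$ are not handled by Lemma \ref{lem_compgp}, which tests birational induction of unipotent classes; rather, since $Z(M_1)$ and $Z(M_4)$ are finite the semisimple parts are isolated, and the unipotent parts ($\{1\}$ and $\orb^{M_4}_{x_{\beta_1}(1)}$) are birationally rigid in $M$, so each of these classes is a whole birational sheet by Remark \ref{rk_lonely} --- no birational-induction check arises in $\mathsf{F}_4$ at all, precisely because there are no spherical proper Levi subgroups. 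Relatedly, constancy of $\Lambda$ along a birational sheet does not follow, as your phrasing suggests, from its being contained in a single spherical sheet ($\Lambda$ can jump within a sheet, as the $\mathsf{C}_2$ remark on $\orb_{[2^2]}$ shows); here the point is vacuous, since every spherical birational sheet of $\mathsf{F}_4$ consists of a single conjugacy class.
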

\begin{proof}
From \cite[Table 22, 23, 24]{Costantini2010}
the weight monoid is preserved along the classes in each $Z(G)\overline{J(\tau)}^{bir}$. The entries in the third column of Table \ref{tab_f4} are pairwise distinct.
\end{proof}

\subsection{Type $\mathsf{G}_2$}

The highest root is $\beta_1 =  (3,2)$. There are no spherical proper Levi subgroups.

We list the  spherical pseudo-Levi subgroups.
\begin{enumerate}[label=(\roman*)]
\item Consider $M_2$, maximal of type ${\sf A}_1 {\sf \tilde A}_1$.
We have $\sigma_2^2=1$ and 
$Z(M_2)=\langle\sigma_2\rangle$.
Then
$\orb_{ \sigma_2}$
is a (birational) sheet consisting of an isolated class.
\item  Consider $M_1$, maximal of type ${\sf A}_2$.
 We have $\sigma_1^3=1$  and
$Z(M_1)=\langle\sigma_1\rangle$; moreover, $\sigma_1$ and $\sigma_1^{-1}$ are $G$-conjugate.
Then
$\orb_{ \sigma_1}$
is a (birational) sheet consisting of an isolated class.
\end{enumerate}

The remaining spherical conjugacy classes in $G$ are $A_1$, $\widetilde A_1$: these are birationally rigid unipotent conjugacy classes in $G$. 
\begin{table}[H]
\centering
{\renewcommand{\arraystretch}{1.5}
\begin{tabular}{|c|c|c|}
\hline
$\tau$  & $\overline{J(\tau)}^{bir}$ & $\Lambda(\orb)$ \\ \hline
$(M_2, \{\sigma_2\}, \{1\})$ & 
$\orb_{\sigma_2}$ 
 &
 $2 n_1\omega_1 + 2 n_2 \omega_2$        \\ \hline
$(M_1, \{\sigma_1\}, \{1\})$& 
$\orb_{\sigma_1}$ &
 $n_1 \omega_1$        \\ \hline
				$(G,\{1\}, A_1)$				& 
${A_1}$ 			&
$ n_2 \omega_2$        		\\ \hline
$(G,\{1\}, \widetilde{A}_1)$							& 
${\widetilde{A}_1}$		&
$ n_1 \omega_1 + 2n_2\omega_2 $        	\\ \hline
  \end{tabular}
  }
    \caption{Type $\mathsf{G}_2$.}
  \label{tab_g2}
\end{table}

\begin{prop}
Theorem \ref{thm_principal} holds for $G$ of type $\mathsf{G}_2$.
\end{prop}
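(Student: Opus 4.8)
The plan is to follow the uniform strategy established for the other types, specialised to the fact that $G$ of type $\mathsf{G}_2$ is at once simply-connected and adjoint, so that $Z(G)$ is trivial. Consequently $Z(G)\overline{J(\tau)}^{bir} = \overline{J(\tau)}^{bir}$ for every $\tau \in \mathscr{BB}(G)$ (which is why Table \ref{tab_g2} carries no $d$-column), and the central translation in the statement of Theorem \ref{thm_principal} is vacuous: the condition $\overline{J(\tau_2)}^{bir} = z\overline{J(\tau_1)}^{bir}$ for some $z \in Z(G)$ simply reads $\overline{J(\tau_1)}^{bir} = \overline{J(\tau_2)}^{bir}$. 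The partition of $G_{sph}$ into spherical birational sheets is already furnished by the general partition result together with the case analysis preceding the table, which identifies the spherical pseudo-Levi subgroups $M_1, M_2$ and the birationally rigid spherical unipotent classes $A_1, \widetilde{A}_1$; thus only the equivalence involving $\C[\orb]$ remains.

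Since every spherical conjugacy class has a multiplicity-free coordinate ring, $\C[\orb_1] \simeq_G \C[\orb_2]$ holds precisely when $\Lambda(\orb_1) = \Lambda(\orb_2)$. The first step is therefore to read off from the weight-monoid tables of \cite{Costantini2010} for type $\mathsf{G}_2$ that $\Lambda(\orb)$ is constant as $\orb$ ranges over each birational sheet $\overline{J(\tau)}^{bir}$, as recorded in the third column of Table \ref{tab_g2}; this yields the ``if'' direction, i.e. the group analogue of Proposition \ref{prop_losevconj}.

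The ``only if'' direction then reduces to checking that the four monoids in the third column of Table \ref{tab_g2} are pairwise distinct. Writing each entry as the monoid generated by the indicated dominant weights, we have $\langle \omega_2 \rangle$ for $A_1$, $\langle \omega_1 \rangle$ for $\orb_{\sigma_1}$, $\langle 2\omega_1, 2\omega_2 \rangle$ for $\orb_{\sigma_2}$ and $\langle \omega_1, 2\omega_2 \rangle$ for $\widetilde{A}_1$; these are visibly pairwise distinct, the only slightly less obvious pair being $\orb_{\sigma_1}$ and $\widetilde{A}_1$, which are separated since $2\omega_2$ lies in the latter monoid but not in the former. As $\mathsf{G}_2$ is the smallest exceptional type and has trivial centre, I foresee no genuine obstacle: the sole substantive ingredient is the correctness of the weight-monoid computations of \cite{Costantini2010}, which we assume, and the distinctness check is routine.
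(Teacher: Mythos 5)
Your proposal is correct and follows essentially the same route as the paper: the paper's proof likewise reads off from \cite[Table 25, 26]{Costantini2010} that the weight monoid is constant along the classes in each $\overline{J(\tau)}^{bir}$ (the central shift being vacuous since $Z(G)$ is trivial in type $\mathsf{G}_2$) and concludes by noting that the entries in the third column of Table \ref{tab_g2} are pairwise distinct. Your explicit listing of the four monoids $\langle \omega_2\rangle$, $\langle \omega_1\rangle$, $\langle 2\omega_1, 2\omega_2\rangle$, $\langle \omega_1, 2\omega_2\rangle$ and the separation of $\langle \omega_1\rangle$ from $\langle \omega_1, 2\omega_2\rangle$ via $2\omega_2$ simply spells out the routine distinctness check that the paper leaves implicit.
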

\begin{proof}
From \cite[Table 25, 26]{Costantini2010}
the weight monoid is preserved along the classes in each $Z(G)\overline{J(\tau)}^{bir}$. The entries in the third column of Table \ref{tab_g2} are pairwise distinct.
\end{proof}

The proof of Theorem \ref{thm_principal} is complete.

\bigskip
We conclude this Section with another characterization of spherical birational sheets up to central elements.
If $H$ is a spherical subgroup of $G$, by \cite[Theorem 1]{Brion}, there exists a flat deformation of $G/H$ to a homogeneous spherical space $G/H_0$, where $H_0$ contains a maximal unipotent subgroup of $G$: such an homogeneous space is called \emph{horospherical}, and $H_0$ a\emph{ horospherical contraction} of $H$, see also \cite{Vinberg}. Moreover, if $G/H$ is (isomorphic to) a conjugacy class, then $\C[G/H]\simeq_G \C[G/H_0]$, see \cite[Theorem 3.15]{Costantini2010}.

\begin{prop} \label{prop_horosph}
Let $G$ be a complex connected reductive algebraic group with $G'$ simply-connected.
Let $x_1, x_2 \in G_{sph}$. 
Then $\orb_{x_1}$ and $\orb_{x_2}$ are contained in the same birational sheet up to a central element if and only if $C_G(x_1)$ and $C_G(x_2)$ have the same horospherical contraction.
\end{prop}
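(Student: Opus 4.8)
The plan is to turn both sides of the claimed equivalence into statements about weight monoids and then to exploit the rigidity of horospherical homogeneous spaces. First I would note that, as $x_1,x_2\in G_{sph}$, the rings $\C[\orb_{x_1}]$ and $\C[\orb_{x_2}]$ are multiplicity-free, so each is determined as a $G$-module by its weight monoid $\Lambda(\orb_{x_i})$. By Theorem \ref{thm_principal}, the classes $\orb_{x_1}$ and $\orb_{x_2}$ lie in the same birational sheet up to a central element if and only if $\C[\orb_{x_1}]\simeq_G\C[\orb_{x_2}]$; by multiplicity-freeness this is the same as $\Lambda(\orb_{x_1})=\Lambda(\orb_{x_2})$. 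Thus the left-hand condition of the proposition is equivalent to an equality of weight monoids.

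Next I would bring the horospherical contractions into the same picture. Since $\orb_{x_i}\simeq G/C_G(x_i)$ is a conjugacy class, \cite[Theorem 3.15]{Costantini2010} gives $\C[\orb_{x_i}]\simeq_G\C[G/H_{0,i}]$, where $H_{0,i}$ is a horospherical contraction of $C_G(x_i)$; in particular $\Lambda(\orb_{x_i})=\Lambda(G/H_{0,i})$. Hence the condition $\Lambda(\orb_{x_1})=\Lambda(\orb_{x_2})$ obtained above is equivalent to $\Lambda(G/H_{0,1})=\Lambda(G/H_{0,2})$, and the whole proposition reduces to comparing the two horospherical spaces through their weight monoids.

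The remaining equivalence, which I expect to be the main obstacle, is that a horospherical homogeneous space is determined up to $G$-isomorphism by its weight monoid; equivalently, that $C_G(x_1)$ and $C_G(x_2)$ have the same horospherical contraction (i.e. $H_{0,1}$ and $H_{0,2}$ are $G$-conjugate) exactly when $\Lambda(G/H_{0,1})=\Lambda(G/H_{0,2})$. The forward direction is clear, since conjugate subgroups have $G$-isomorphic coordinate rings and hence identical weight monoids. For the converse I would invoke the structure theory of horospherical subgroups (see \cite{Vinberg}): after conjugating so that $H_{0,1}$ and $H_{0,2}$ contain a common maximal unipotent subgroup, each $H_{0,i}$ satisfies $P_i^u\subseteq H_{0,i}\subseteq P_i$ with $P_i:=N_G(H_{0,i})$ a parabolic subgroup and $H_{0,i}\supseteq L_i'$ for a Levi factor $L_i$ of $P_i$, so that $H_{0,i}$ is encoded by the pair consisting of $P_i$ and a diagonalizable subgroup of the torus $L_i/L_i'$. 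The weight monoid $\Lambda(G/H_{0,i})$ is saturated, being the intersection of $X(T)^+$ with a sublattice of $X(T)$; from it one recovers $P_i$ through the simple roots annihilating every weight of the monoid, and one recovers the subtorus through the lattice $\Z\Lambda(G/H_{0,i})$. Therefore a common weight monoid forces $H_{0,1}=H_{0,2}$, and chaining the three equivalences completes the proof.
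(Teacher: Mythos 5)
Your first two reductions coincide with the paper's: Theorem \ref{thm_principal} together with multiplicity-freeness converts ``same birational sheet up to a central element'' into $\Lambda(\orb_{x_1})=\Lambda(\orb_{x_2})$, and \cite[Theorem 3.15]{Costantini2010} identifies $\Lambda(\orb_{x_i})$ with $\Lambda(G/H_{0,i})$. The gap is in your final step: the lemma you invoke --- that a horospherical homogeneous space is determined up to $G$-isomorphism by its weight monoid --- is false as stated. In $G=\SL_2(\C)$ both $B^-$ and $G$ are horospherical subgroups (each contains a maximal unipotent subgroup), they are not conjugate, yet $\C[G/B^-]=\C=\C[G/G]$ because $G/B^-$ is a complete variety; both weight monoids are $\{0\}$. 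Concretely, your recovery of $P_i$ as ``the simple roots annihilating every weight of the monoid'' presupposes that for each simple root $\alpha\notin\Theta_i$ some $\lambda\in\Lambda(G/H_{0,i})$ pairs nontrivially with $\alpha^\vee$, and your recovery of the diagonalizable part presupposes $\Z\Lambda(G/H_{0,i})$ equals the full character group $X(P_i/H_{0,i})$, i.e.\ that the dominant elements of that lattice generate it. Neither holds for arbitrary horospherical data, as the example shows, so the claimed injectivity $\Lambda(G/H_{0,1})=\Lambda(G/H_{0,2})\Rightarrow H_{0,1}=H_{0,2}$ does not follow from the structure theory alone.

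The missing hypothesis is quasi-affineness (observability) of $G/H_{0,i}$: here it holds, since $G/H_{0,i}$ is a flat degeneration of the quasi-affine conjugacy class $\orb_{x_i}$, and for quasi-affine horospherical spaces your dictionary between monoids and pairs (parabolic, character lattice) is indeed faithful --- for instance because $G/H_{0,i}$ is then recovered as the open orbit in the spectrum of the $G$-algebra $\bigoplus_{\lambda\in\Lambda}V(\lambda)$ with Cartan multiplication, which depends only on $\Lambda$. You would need to state and use this, or else argue as the paper does: it quotes \cite[Corollary 3.8]{Costantini2010} for the explicit normalized contraction $H_0=\langle U^-,U_{w_\Theta},T_x\rangle$, so that $H_0$ is encoded exactly by the pair $(w,T_x)$, and then \cite[Lemma 3.9, Theorem 3.23]{Costantini2010} for the equivalence $\Lambda(\orb_{x_1})=\Lambda(\orb_{x_2})\iff w_1=w_2 \mbox{ and } T_{x_1}=T_{x_2}$, which is precisely the injectivity your general argument attempts to reprove. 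A minor further point: ``same horospherical contraction'' is read via the normalized contractions containing $U^-$, so your passage from $G$-conjugacy of $H_{0,1},H_{0,2}$ to equality needs the (easy, but unstated) observation that $H_{0,i}$ is normal in the parabolic $N_G(H_{0,i})$, since it contains its unipotent radical and the derived subgroup of its Levi factor.
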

\begin{proof}
Let  $x \in G_{sph}$ and $H=C_G(x)$.
We recall the description of  the horospherical contraction $H_0$ of $H$ containing $U$ from  \cite[Corollary 3.8]{Costantini2010}.
Let $w$ be the unique element in $W$ such that $\orb_x \cap BwB$ is dense in $\orb_x$.
By choosing $x\in wB$,  the dense $B$-orbit in $\orb_x$ is $\orb^B_x$.
Then $P:=\{g\in G\mid g\cdot \orb^B_x=\orb^B_x\}$ is a  parabolic subgroup containing $B$. Let $\Theta \subseteq \Delta$ be such that $P=P_\Theta$.
One has
$H_0=\langle{U^-,U_{w_\Theta},T_x}\rangle$, where, $w:=w_0w_{\Theta}$,
$U_{w_\Theta}:=U\cap L_\Theta$, $T_x:=T\cap C_G(x)$. 

We may assume that $x_i$ lies in the dense $B$-orbit $\orb^B_{x_i}$ \ ($\subseteq Bw_iB$), for $i=1, 2$.
We have seen that $\orb_{x_1}$ and $\orb_{x_1}$ are contained in the same birational sheet up to a central element  if and only if $\Lambda(\orb_{x_1})=\Lambda(\orb_{x_2})$.
The last  equality is equivalent to $w_1=w_2$ and $T_{x_1}=T_{x_2}$ by \cite[Lemma 3.9, Theorem 3.23]{Costantini2010}. 
\end{proof}

\begin{rk}
From the classification it follows that the birationally rigid unipotent conjugacy class $\orb^M$ appearing in the decomposition datum $\tau=(M, Z(M)^\circ z, \orb^M)$ is in fact rigid, except in the cases 
\begin{enumerate}
\item[(i)] $(G, \{1\},X_2)$ in type $\mathsf{C}_{n}$, $n\geq 3$;
\item[(ii)]  $(G,\{1\}, Z_m)$ in type $\mathsf{B}_{2m}$, $m\geq 2$.
\end{enumerate}
In the first (resp. second) case $\overline{J(\tau)}^{bir}$ is contained only in the (spherical) sheet corresponding to $(L_1, Z(L_1)^\circ, \{1\})$ (resp. $(L_n, Z(L_n)^\circ, \{1\})$).

In the other cases $\overline{J(\tau)}^{bir}$ is contained only in the sheet $\overline{J(\tau)}^{reg}$: in particular every spherical birational sheet is contained in a unique sheet.
\end{rk}

\section{Remarks for Lie algebras}
By \cite[Proposition 1]{Arzhantsev_1997}, the subset $\mathfrak{g}_{sph}$ consisting of spherical ajoint orbits is a union of sheets.
Since every birational sheet is contained in a sheet, $\mathfrak{g}_{sph}$ is a union of spherical birational sheets.
Being birational sheets disjoint, $\mathfrak{g}_{sph}$ is a disjoint union of spherical birational sheets.

Having described the spherical birational sheets in $G$, from the tables in \S \ref{ZB} one can easily deduce the corresponding classification of spherical birational sheets in $\lieg$.
In each table we have a spherical birational sheet $\overline{\mathfrak{J}(\mathfrak{l}, \Orb^L)}^{bir}$ for each $\tau=(L,Z(L)^\circ, \orb^L)$ with $L$ a Levi subgroup of $G$:
here $\mathfrak{l} := \Lie(L)$ and $ \Orb^L$ is the nilpotent orbit in $\mathfrak{l}$ corresponding to $\orb^L$.
Moreover, if $L=G$, then $\overline{\mathfrak{J}(\mathfrak{g}, \Orb)}^{bir}= \Orb$; 
if $L \lneq G$, then $L$ is a maximal Levi subgroup $L_i$ of $G$ and $\orb^{L_i}=\{1\}$ so that $\Orb^{L_i}=\{0\}$. 
There are two possibilities: either $\overline{J(\tau)}^{bir}$ does not contain unipotent conjugacy classes, or it contains a unique unipotent conjugacy class $\orb_u$.
In this case, let $\Orb_\nu$ be the corresponding nilpotent orbit in $\lieg$.
Then we have 
$\overline{\mathfrak{J}(\mathfrak{l_i}, \{0\})}^{bir}=\bigcup_{\zeta\not=0} 
\Orb_{\zeta \check{\omega}_i}$ in the first case and $\overline{\mathfrak{J}(\mathfrak{l_i}, \{0\})}^{bir}=\bigcup_{\zeta\not=0} 
\Orb_{\zeta \check{\omega}_i}\cup \Orb_\nu$ in the second case.
In particular this proves Losev's Conjecture \ref{conj} for $\mathfrak{g}_{sph}$:
\begin{thm}  \label{thm_principal_lie}
Let $\mathfrak{g}$ be reductive and let $\Orb_1$ and $\Orb_2$ be spherical adjoint orbits of $\mathfrak{g}$.
Then $\Lambda(\Orb_1) = \Lambda(\Orb_2)$ if and only if 
$\Orb_1$ and $\Orb_2$ are contained in the same birational sheet of $\mathfrak{g}$. \hfill \qed
\end{thm}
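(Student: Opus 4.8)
The plan is to derive the statement from the group classification of Section \ref{ZB}, after reducing to $\mathfrak{g}$ simple. For the reduction, write $\mathfrak{g} = \mathfrak{z}(\mathfrak{g}) \oplus \mathfrak{g}_1 \oplus \cdots \oplus \mathfrak{g}_r$ with each $\mathfrak{g}_j$ simple; every adjoint orbit is the $\mathfrak{z}(\mathfrak{g})$-translate of a product of orbits in the $\mathfrak{g}_j$. Translation by a central vector is a $G$-equivariant isomorphism, so it alters neither $\Lambda$ nor the containing birational sheet (which is itself stable under translation by $\mathfrak{z}(\mathfrak{g})$, since every Levi subalgebra contains $\mathfrak{z}(\mathfrak{g})$); and sphericity, the weight monoid, and the birational sheet all factor through the simple summands. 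So I may assume $\mathfrak{g}$ simple. The forward implication is then immediate: if $\Orb_1$ and $\Orb_2$ lie in the same birational sheet, Proposition \ref{prop_losevconj} gives $\C[\Orb_1] \simeq_G \C[\Orb_2]$, and since both orbits are spherical their coordinate rings are multiplicity-free and hence determined by their weight monoids, whence $\Lambda(\Orb_1) = \Lambda(\Orb_2)$. As sphericity is preserved along (birational) sheets, this also shows $\Lambda$ is constant on every spherical birational sheet.

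Consequently the converse is equivalent to the assertion that distinct spherical birational sheets of $\mathfrak{g}$ carry distinct weight monoids. Here I would use that $\mathfrak{g}_{sph}$ is the disjoint union of its spherical birational sheets, together with the description recorded immediately before the statement: each such sheet is either a single birationally rigid spherical nilpotent orbit $\Orb_\nu$ (datum $(\mathfrak{g}, \Orb_\nu)$), or a sheet $\overline{\mathfrak{J}(\mathfrak{l}_i, \{0\})}^{bir}$ attached to a maximal Levi subalgebra $\mathfrak{l}_i = \Lie(L_i)$, whose generic members are the semisimple orbits $\Orb_{\zeta \check\omega_i}$ with $\zeta \neq 0$. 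These two families correspond bijectively to the rows of the tables of Section \ref{ZB} whose decomposition datum has genuine Levi part, namely the rows $(G, \{1\}, \orb_u)$ and $(L_i, \, \cdot \, , \{1\})$.

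It remains to identify the weight monoid of each sheet with the corresponding third-column entry. For a sheet $\overline{\mathfrak{J}(\mathfrak{l}_i, \{0\})}^{bir}$ the monoid $\Lambda$ is constant, so I would evaluate it on a generic $\Orb_{\zeta\check\omega_i}$; since $C_G(\zeta\check\omega_i) = L_i$ for generic $\zeta$, this orbit is $G$-isomorphic to $G/L_i$, exactly the homogeneous space carried by the generic semisimple class $\orb_{\exp(\zeta\check\omega_i)}$ of the matching group row, so the two monoids coincide. For a nilpotent sheet $\Orb_\nu$, a Springer isomorphism $\phi \colon \mathcal{N} \to \mathcal{U}$ restricts to a $G$-equivariant isomorphism $\Orb_\nu \xrightarrow{\sim} \orb_{\phi(\nu)}$, giving $\C[\Orb_\nu] \simeq_G \C[\orb_{\phi(\nu)}]$ and hence $\Lambda(\Orb_\nu) = \Lambda(\orb_{\phi(\nu)})$, the entry of the row $(G, \{1\}, \orb_{\phi(\nu)})$. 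Thus distinct spherical birational sheets of $\mathfrak{g}$ are assigned distinct rows, and since the third-column entries of each table were shown pairwise distinct in the Propositions proving Theorem \ref{thm_principal}, distinct sheets carry distinct weight monoids. This establishes the converse, and with it the theorem.

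The conceptual heart of the argument---and the point I would be most careful about---is that passing from $G$ to $\mathfrak{g}$ is precisely what erases the ``up to a central element'' proviso of Theorem \ref{thm_principal}. Centralizers of semisimple elements of $\mathfrak{g}$ are always Levi subalgebras, so the pseudo-Levi rows of the tables, and with them all isolated-semisimple sheets, have no Lie-algebra counterpart; moreover the several group birational sheets inside a single $Z(G)\overline{J(\tau)}^{bir}$, which share a weight monoid and differ only by a central translation invisible to the $G$-module structure (as in the type $\mathsf{C}_{2p}$ coincidence of Remark \ref{rk_notchar}), all collapse onto the single sheet $(\mathfrak{l}_i, \{0\})$ because $\mathfrak{z}(\mathfrak{l}_i)$ is connected. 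Once this correspondence is set up and the two isomorphisms above are verified, the required distinctness is inherited verbatim from Section \ref{ZB}, so I expect no genuinely new computation beyond bookkeeping the collapse of central translates.
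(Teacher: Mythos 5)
Your proposal is correct and takes essentially the same route as the paper: both deduce the classification of spherical birational sheets of $\mathfrak{g}$ from the group tables of Section \ref{ZB} (identifying the nilpotent rows via a Springer isomorphism and the semisimple rows via $\Orb_{\zeta \check{\omega}_i} \simeq G/L_i \simeq \orb_{\exp(\zeta \check{\omega}_i)}$), use Proposition \ref{prop_losevconj} together with multiplicity-freeness for the forward direction, and inherit the pairwise distinctness of the weight monoids for the converse. The details you make explicit --- the reduction to $\mathfrak{g}$ simple, the stability of Lie-algebra birational sheets under central translation, and the fact that only the Levi-type rows matter (so the type $\mathsf{C}_{2p}$ monoid coincidence, which involves only pseudo-Levi rows, is harmless) --- are precisely the points the paper leaves implicit.
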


\section*{Acknowledgments}
The authors would like to thank Giovanna Carnovale and Bart Van Steirteghem for interesting conversations and Eric Sommers for his help in the proof of Theorem \ref{thm_sln}.
This research was partially supported by Project 34672 Parity Sheaves on Kashiwaras flag manifold, funded by the MIUR-DAAD Joint Mobility Program 2018/2019,
BIRD179758/17 Project Stratifications in algebraic groups, spherical varieties, Kac-Moody algebras and Kac-Moody groups funded by the University of Padova and
 by MIUR-Italy via PRIN ``Group theory and applications''.

\bibliography{biblio}

\end{document}